\documentclass[10pt]{amsart}
\usepackage{amssymb,graphicx}
\usepackage{mathrsfs, bbm, slashed}
\usepackage{color}
\usepackage{enumerate}
\usepackage{a4wide}
\usepackage[colorlinks=true,citecolor=blue,linkcolor=blue]{hyperref}

\pagestyle{plain}

\newcommand{\R}{\mathbb{R}}

\newcommand{\C}{\mathbb{C}}

\newcommand{\Z}{\mathbb{Z}}

\newcommand{\bS}{\mathbb{S}}


\newcommand{\sA}{\mathscr{A}}

\newcommand{\sE}{\mathscr{E}}
\newcommand{\sF}{\mathscr{F}}
\newcommand{\sL}{\mathscr{L}}
\newcommand{\csL}{\overline{\mathscr{L}}}
\newcommand{\sH}{\mathscr{H}}
\newcommand{\sK}{\mathscr{K}}
\newcommand{\sM}{\mathscr{M}}
\newcommand{\sMs}{\mathscr{M}_{\textup{s}}}
\newcommand{\sR}{\mathscr{R}}

\newcommand{\sT}{\mathscr{T }}

\newcommand{\sW}{\mathscr{W}}

 \newcommand{\psido}{$\Psi$DO}
 \newcommand{\psidos}{$\Psi$DOs}

\def\Xint#1{\mathchoice
{\XXint\displaystyle\textstyle{#1}}%
{\XXint\textstyle\scriptstyle{#1}}%
{\XXint\scriptstyle\scriptscriptstyle{#1}}%
{\XXint\scriptscriptstyle\scriptscriptstyle{#1}}%
\!\int}
\def\XXint#1#2#3{{\setbox0=\hbox{$#1{#2#3}{\int}$}
\vcenter{\hbox{$#2#3$}}\kern-.5\wd0}}

\def\dashint{\Xint-}

\newcommand{\bint}{\ensuremath{\dashint}}
\newcommand{\Trwmed}{\Tr_{\omega_{\textup{med}}}}
\newcommand{\bTrwmed}{\overline{\op{Tr}}_{\omega_{\textup{med}}}}
\newcommand{\bTrw}{\overline{\op{Tr}}_{\omega}}
\newcommand{\op}{\operatorname} 

\newcommand{\Sp}{\op{Sp}}
\newcommand{\tr}{\op{tr}}

\newcommand{\Tr}{\op{Tr}}
\newcommand{\Trw}{\Tr_{\omega}}

\newcommand{\Com}{\op{Com}}
\newcommand{\End}{\op{End}}

\newcommand{\Res}{\op{Res}}

\newcommand{\limw}{{\lim}_{\omega}\,} 
\newcommand{\limmed}{\lim\op{med}} 
\newcommand{\med}{\textup{med}}

\newcommand{\scal}[2]{\ensuremath{\left\langle #1 | #2 \right\rangle}} 
\newcommand{\acou}[2]{\ensuremath{\left\langle #1 , #2 \right\rangle}} 
\newcommand{\bigscal}[2]{\ensuremath{\big\langle #1 | #2 \big\rangle}} 
\newcommand{\bigacou}[2]{\ensuremath{\big\langle #1 , #2 \big\rangle}}


\newcommand{\dom}{\op{dom}}


\def\Xint#1{\mathchoice
{\XXint\displaystyle\textstyle{#1}}%
{\XXint\textstyle\scriptstyle{#1}}%
{\XXint\scriptstyle\scriptscriptstyle{#1}}%
{\XXint\scriptscriptstyle\scriptscriptstyle{#1}}%
\!\int}
\def\XXint#1#2#3{{\setbox0=\hbox{$#1{#2#3}{\int}$ }
\vcenter{\hbox{$#2#3$ }}\kern-.6\wd0}}

\numberwithin{equation}{section}

\newtheorem{theorem}{Theorem}[section]
\newtheorem{proposition}[theorem]{Proposition}
\newtheorem{corollary}[theorem]{Corollary}
\newtheorem*{questionA}{Question A}
\newtheorem*{questionB}{Question B}
\newtheorem*{questionC}{Question C}
\newtheorem*{questionD}{Question D}
\newtheorem*{questionE}{Question E}

\newtheorem{lemma}[theorem]{Lemma}

\newtheorem*{conjecture*}{Conjecture}

\theoremstyle{definition}
\newtheorem{definition}[theorem]{Definition}

\theoremstyle{remark}

\newtheorem{remark}[theorem]{Remark}
 
\newtheorem*{claim*}{Claim} 

\newcommand{\LlogL}{L\!\log\!L}

\newcommand{\ssD}{\slashed{D}}

\title{Connes' Integration and Weyl's Laws}
\date{\today}

\author{Rapha\"el Ponge}
 \address{School of Mathematics, Sichuan University, Chengdu, China}
 \email{ponge.math@icloud.com}

\begin{document}
\begin{abstract}
This paper deal with some questions regarding the notion of integral in the framework of Connes's noncommutative geometry. First, we present a purely spectral theoretic construction of Connes' integral. This answers a question of Alain Connes. We also deal with the compatibility of Dixmier traces with Lebesgue's integral. This answers another question of Alain Connes. We further clarify the relationship of Connes' integration with Weyl's laws for compact operators and Birman-Solomyak's perturbation theory. We also give a "soft proof" of Birman-Solomyak's Weyl's law for negative order pseudodifferential operators on closed manifold. This Weyl's law yields a stronger form of Connes' trace theorem. Finally, we explain the relationship between Connes' integral and semiclassical Weyl's law for Schr\"odinger operators. This is an easy consequence of the Birman-Schwinger principle. We thus get a neat link between noncommutative geometry and semiclassical analysis. 
\end{abstract}
\maketitle 

\section{Introduction}
The quantized calculus of Connes~\cite{Co:NCG} aims at translating the main tools of the classical infinitesimal calculus into the operator theoretic language of quantum mechanics. As an Ansatz the integral in this setup should be a positive trace on the weak trace  class $\sL_{1,\infty}$ (see Section~\ref{sec:NCInt}). Natural choices are given by the traces  $\Tr_\omega$ of Dixmier~\cite{Di:CRAS66} (see also~\cite{Co:NCG, LSZ:Book} and Section~\ref{sec:NCInt}). These traces are associated with extended limits. 
 Following Connes~\cite{Co:NCG} we say that an operator $A\in\sL_{1,\infty}$ is measurable when the value of $\Tr_\omega(A)$ is independent of the extended limit.  We then define the NC integral $\bint A$ to be this value. It follows from this construction that if $A\in \sL_{1,\infty}$ is \emph{positive}, then 
\begin{equation*}
 \bigg(\text{$A$ is measurable and}\ \bint A= L\bigg) \Longleftrightarrow \lim_{N\rightarrow \infty} \frac{1}{\log N} \sum_{j<N} \lambda_j(A)=L, 
\end{equation*}
where $\lambda_0(A)\geq \lambda_1(A) \geq \cdots $ are the eigenvalues of $A$ counted with multiplicity. 

During the conference ``Noncommutative geometry: state of the arts and future prospects", which was held at Fudan University in Shanghai, China, from March 29-April 4, 2017,  Alain Connes asked the following question: 

\begin{questionA}[Connes]
 Is it possible to show the existence of a limit for all measurable operators without using extended limits? 
\end{questionA}

In other words, Connes is stressing the need for a purely spectral theoretic construction of the integral in noncommutative geometry. A partial answer to this question was given in a recent preprint of Sukochev-Zanin~\cite{SZ:arXiv21}. However, that paper deals with a special class of operators and the approach still relies on using extended limits at some intermediate step. Therefore, this does not provide a satisfactory answer to Connes' question. 

In this paper we observe that we can answer Connes' question by using a lemma in the 2012 book of Lord-Sukochev-Zanin~\cite{LSZ:Book}. 
 Although the main focus of this book is on singular traces, the authors establish there an interesting asymptotic additivity result for sums of eigenvalues of weak trace class operators. Recall if $A$ is a compact operator,  its spectrum can be organized as a sequence of eigenvalues $(\lambda_j(A))_{j\geq 0}$ such that $|\lambda_0(A)|\geq |\lambda_1(A)|\geq \cdots$, where each eigenvalue is repeated according to its (algebraic) multiplicity. By~\cite[Lemma~5.7.5]{LSZ:Book} if $A$ and $B$ are operators in $\sL_{1,\infty}$, then
 \begin{equation}
 \sum_{j\leq N} \lambda_j(A+B) =  \sum_{j\leq N} \lambda_j(A)  +  \sum_{j\leq N} \lambda_j(B) +\op{O}(1).
 \label{eq:Intro.additivity}  
\end{equation}
This result is related to an eigenvalue characterization of the commutator space $\Com(\sL_{1,\infty})$. It is also used in~\cite{LSZ:Book} to show that an operator $A\in \sL_{1,\infty}$ is measurable if and only it is Tauberian, in the sense that 
\begin{equation}
 \lim_{N\rightarrow \infty} \frac{1}{\log N} \sum_{j<N} \lambda_j(A)\ \text{exists}. 
 \label{eq:Intro.Tauberian-limit}  
\end{equation}

We take the point of view to start from scratch and work with Tauberian operators from the very beginning. We define on such operators a functional $\bint'$ given by the limit in~(\ref{eq:Intro.Tauberian-limit}). It easily follows from~(\ref{eq:Intro.additivity}) that Tauberian operators form a subspace of $\sL_{1,\infty}$ on which $\bint'$ is a positive linear trace, i.e., it satisfies the NC integral's Ansatz  (see Proposition~\ref{prop:NCInt.properties-Lambda}). The key  result is the equality between this functional and the NC integral as defined above (Theorem~\ref{thm:NCTint.Tauberian-measurable}). In particular, if $A$ is measurable, then 
\begin{equation}
 \bint A = \lim_{N\rightarrow \infty} \frac{1}{\log N} \sum_{j<N} \lambda_j(A). 
 \label{eq:Intro.bint-limit} 
\end{equation}
This formula is not stated in~\cite{LSZ:Book}. This gives a purely spectral theoretic construction of Connes' integral, and hence this answers Connes' Question.

Another interesting consequence of the above result is the spectral invariance of Connes' integral. Namely, if two weak trace operators $A$ and $B$ (possibly acting on different Hilbert spaces) have the same non-zero eigenvalues with same multiplicities, then one is measurable if and only if the other is, and this case their NC integrals agree (see Proposition~\ref{prop:NCInt.spectral-invariance}).   

Another important question regarding Connes' integral is its compatibility with Lebesgue's integral. This is a sensible question since $\sL_{1,\infty}$ is a quasi-Banach ideal, but this is not a Banach space or even a locally convex topological vector space. Thus, Bochner intergration and Gel'fand-Pettis integration of maps  with values in $\sL_{1,\infty}$ do not make sense. 

\begin{questionB}[Connes~\cite{Co:Survey19}] 
Can we single out a Dixmier trace that commutes with Lebesgue's integral?
 \end{questionB}
 
We stress that we're seeking for a trace that is defined on all weak trace operators. Connes~\cite{Co:Survey19} actually suggested to use Dixmier traces associated with medial limits in the sense of Mokodoski~\cite{Me:SPS73}. They are extended limits with the fundamental property to be universally measurable and to commute with Lebesgue's integration (see~\cite{Me:SPS73}). 

We observe that any Dixmier trace $\Trw$ uniquely extends to a linear trace $\bTrw:\csL_{1,\infty}\rightarrow \C$, where
 $\csL_{1,\infty}$ is the closure of $\sL_{1,\infty}$ in the Dixmier-Macaev ideal (see Lemma~\ref{lem:Int.extension-tau}).  The advantage of passing to $\csL_{1,\infty}$  is to work with a Banach space, since the Dixmier-Macaev ideal is a Banach ideal. Thus, Bochner integration with values in $\csL_{1,\infty}$  makes sense and commutes with continuous linear forms. We then get for almost free the following compatibility result (Proposition~\ref{prop:Int.compatibility}): if $(\Omega, \mu)$ is a measure space and $A:\Omega\rightarrow \sL_{1,\infty}$ is a measurable map which is integrable as an $\csL_{1,\infty}$-map, then we have 
\begin{equation*}
 \int_\Omega \Trw\big[A(x)\big]d\mu(x)= \bTrw \bigg( \int_\Omega A(x)d\mu(x)\bigg). 
\end{equation*}
In particular, if $(\Omega,\mu)$ is a finite measure space, then the above result holds for any (essentially) bounded measurable map $A:\Omega\rightarrow \sL_{1,\infty}$. Furthermore, for such maps and in the special case of Dixmier traces associated with medial limits, this result is an immediate consequence of the fundamental property of medial limits alluded above (see Section~\ref{sec:Lebesgue}). This confirms Connes' suggestion. 

As it turns out, there are numerous positive traces on $\sL_{1,\infty}$ that are not Dixmier traces. Therefore, it is natural to consider a notion of measurability with respect to all positive traces on $\sL_{1,\infty}$ (see, e.g.,~\cite{KLPS:AIM13, LSZ:Book, SSUZ:AIM15}). We shall call such operators \emph{strongly measurable}. For sake of completeness we overview their main properties. These operators actually form a natural domain for the NC integral, in the sense that its restriction to strongly measurable still satisfies the NC integral's Ansatz (see Proposition~\ref{prop:NCInt.strong-measurable}). We also establish the spectral invariance of the strong measurability property (see Proposition~\ref{prop:NCint.spectral-inv.strong}). 

Strong measurability naturally appear in the context of Connes' trace theorem~\cite{Co:CMP88, KLPS:AIM13}. Suppose that $(M^n,g)$ is a closed Riemannian manifold and $E$ is a Hermitian vector bundle over $M$. Recall that Connes' trace theorem asserts that if $P:C^\infty(M,E)\rightarrow C^\infty(M,E)$ is a pseudodifferential operator (\psido)  of order~$-n$, then $P$ is strongly measurable, and we have
\begin{equation}
 \bint P = \frac1{n} \int_{S^*M} \tr_E\big[\sigma(P)(x,\xi)\big]dxd\xi,
 \label{eq:Intro.Connes-trace-thm}
\end{equation}
where $\sigma(P)$ is the principal symbol of $P$ and $S^*M$ is the cosphere bundle equipped with its Liouville measure $dxd\xi$. 
 The r.h.s.~is the noncommutative residue trace of $P$ in the sense of Guillemin~\cite{Gu:AIM85} and Wodzicki~\cite{Wo:NCRF}. Applying the above result to $P=f\Delta_g^{-n/2}$, where $f\in C^\infty(M)$ and $\Delta_g$ is the Laplace-Beltrami operator on functions, gives Connes' integration formula,
\begin{equation}
 \bint f\Delta_g^{-\frac{n}2} = c_n \int_M f(x) \sqrt{g(x)}dx, \qquad c_n:= \frac{1}{n} (2\pi)^{-n}\big|\bS^{n-1}\big|. 
 \label{eq:Intro.Connes-Integration-Formula} 
\end{equation}
This shows that the NC integral recaptures the Riemannian measure $\sqrt{g(x)}dx$. 

In the special case $f=1$ Connes' integration formula~(\ref{eq:Intro.Connes-Integration-Formula}) is an immediate consequence of the Weyl's law for the Laplacian $\Delta_g$. Strong measurability does not imply Weyl's law. Therefore, we are lead to the following question: 

\begin{questionC}
 What is the precise relationship between Weyl's law and measurability? 
\end{questionC}

This question is closely related to the work of Birman-Solomyak~\cite{BS:JFAA70, BS:VLU77, BS:VLU79, BS:SMJ79} on Weyl's laws for compact operators in the 70s. This work was partly motivated by the semiclassical analysis of Schr\"odinger operators, since by the Birman-Schwinger principle  Weyl's laws for compact operators yield semiclassical Weyl's laws for Schr\"odinger operators. In particular, Birman-Solomyak~\cite{BS:JFAA70} set-up a full perturbation theory. In~\cite{BS:VLU77, BS:VLU79, BS:SMJ79} they further showed that if $P$ is a selfadjoint \psido\ of order $-m<0$, and we set $p=nm^{-1}$, then we have the following Weyl's law,  
\begin{equation}
 \lim_{j\rightarrow \infty} j^{\frac1p} \lambda^\pm_j(P)= \bigg[\frac1{n} (2\pi)^{-n} \int_{S^*M} \tr_E\big[ \sigma(P)(x,\xi)_\pm^{p} \big] dx d\xi\bigg]^{\frac1{p}}.
 \label{eq:Intro.Bir-Sol-selfadjoint}
\end{equation}
where $\pm\lambda_{0}^\pm(P) \geq \pm\lambda_{1}^\pm(P)\geq \ldots $ are the positive and negative eigenvalues of $P$. We also have a similar results for the singular values of $P$ (i.e., the eigenvalues of $|P|=\sqrt{P^*P}$) without any selfadjointness assumption. Namely, 
\begin{equation}
 \lim_{j\rightarrow \infty} j^{\frac1p} \mu_j(P)= \bigg[\frac1{n} (2\pi)^{-n} \int_{S^*M} \tr_E\big[ |\sigma(P)(x,\xi)|^{p} \big] dx d\xi\bigg]^{\frac1{p}}, 
 \label{eq:Intro.Bir-Sol-|P|}
\end{equation}
where $\mu_0(P)\geq \mu_1(P)\geq \cdots$ are the singular values of $P$. 

As it turns out (see Proposition~\ref{prop:Bir-Sol.Weyl-mesurable}), any selfadjoint operator $A\in \sL_{1,\infty}$ satisfying a Weyl's law of the form~(\ref{eq:Intro.Bir-Sol-selfadjoint}) for $p=1$ is \emph{strongly} measurable, and we have
\begin{equation}
 \bint A =  \lim_{j\rightarrow \infty} j \lambda^+_j(A) -  \lim_{j\rightarrow \infty} j \lambda^-_j(A).
 \label{eq:Intro.bint-lambdapm}
\end{equation}
There is a similar result for the absolute value $|A|$ in terms of the singular values of $A$ (\emph{cf.}~Corollary~\ref{cor:Bir-Sol.Weyl-mesurable-|A|}). This answers Question~C. This also provides a further spectral theoretic description of the NC integral for operators satisfying Weyl's laws. Incidentally, this shows that the Weyl's laws~(\ref{eq:Intro.Bir-Sol-selfadjoint})--(\ref{eq:Intro.Bir-Sol-|P|}) of Birman-Solomyak provide us with a stronger form of Connes' trace theorem~(\ref{eq:Intro.Connes-trace-thm}). For instance, if $P$ is any \psido\ of order $-n$, then its absolute value $|P|$ is strongly measurable, even though it need not be a \psido.

The original proof of the Weyl's laws~(\ref{eq:Intro.Bir-Sol-selfadjoint})--(\ref{eq:Intro.Bir-Sol-|P|}) by Birman-Solomyak~\cite{BS:JFAA70, BS:VLU77, BS:VLU79, BS:SMJ79} is arguably a beautiful piece of hard analysis. Unfortunately, the main key technical details are exposed in a somewhat compressed manner 
in the Russian article~\cite{BS:VLU79}, the translation of which remains unavailable. 

\begin{questionD}
 Is there a ``soft proof'' of Birman-Solomyak's Weyl's laws~(\ref{eq:Intro.Bir-Sol-selfadjoint})--(\ref{eq:Intro.Bir-Sol-|P|})? 
\end{questionD}

We provide such a proof in Section~\ref{sec:Weyl-neg-PsiDOs}. The approach uses the relationship between zeta functions and the noncommutative residue to get the Weyl's laws~(\ref{eq:Intro.Bir-Sol-selfadjoint})--(\ref{eq:Intro.Bir-Sol-|P|}) for inverses of elliptic operators. The perturbation theory of Birman-Solomyak~\cite[\S4]{BS:JFAA70} and the BKS inequality~\cite{BKS:IVUZM75} then allow us to get the Weyl's laws in the general case. We refer to Section~\ref{sec:Weyl-neg-PsiDOs} for the full details.

Noncommutative geometry and semiclassical analysis are usually considered to be different sub-fields of quantum theory. As mentioned above the Birman-Schwinger principle provides a bridge between Weyl's laws for compact operators and semiclassical Weyl's laws for Schr\"odinger operators. As we have related the former to Connes' integration, we are lead to the following question:

\begin{questionE}
 What is the precise relationship between Connes' integral and semiclassical Weyl's laws for Schr\"odinger operators?
\end{questionE}

It's clear that an answer is provided by the Birman-Schwinger principle  and the integration formula~(\ref{eq:Intro.bint-lambdapm}). The question is more like to determine the level of generality at which the relationship holds and explain how simple this relation is (compare~\cite{MSZ:arXiv21}). 

To wit let $H$ be a bounded from below operator with non-negative spectrum. We assume that $0$ is an isolated eigenvalue with finite multiplicity. Let $V$ be a selfadjoint $H$-form compact perturbation. The operator $H+V$ then makes sense as a form sum and the negative part of its spectrum is discrete. Let $N^{-}(H+V)$ be the number of its negative eigenvalues counted with multiplicity (i.e., the number of bound states in physics' jargon). 

The abstract Birman-Schwinger principle~\cite{BS:AMST89} (see also~\cite{MP:Part1}) relates $N^{-}(H+V)$ to the counting function of the Birman-Schwinger operator $H^{-1/2}VH^{-1/2}$. Combining this with the integration formula~(\ref{eq:Intro.bint-lambdapm}) shows that if $V\geq 0$ and $H^{-1/2}VH^{-1/2}$ satisfies a Weyl's law of the form~(\ref{eq:Intro.Bir-Sol-selfadjoint}) for $p=1$, then (see Proposition~\ref{prop:SC.NHV-bint}) we have 
\begin{equation*}
   \lim_{h \rightarrow 0^+} h^2 N^{-}\big( h^2H-V\big)=  \bint H^{-\frac12}VH^{-\frac12}.
\end{equation*}
This answers Question~E and gives a neat link between Connes' noncommutative geometry and the semiclassical analysis of Schr\"odinger operators. We also illustrate this result in terms of the recent Weyl's laws for $\LlogL$-Orlicz potentials on closed manifolds by Rozenblum~\cite{Ro:arXiv21} and Sukochev-Zanin~\cite{SZ:arXiv21} (see also~\cite{Po:Weyl-Orlicz, RS:EMS21}). This leads us to a semiclassical interpretation of Connes' integration formula~(\ref{eq:Intro.Connes-Integration-Formula}) (see Section~\ref{sec:SC}). 

The remainder of this paper is organized as follows. In Section~\ref{sec:NCInt}, we deal with Question~A and give a purely spectral theoretic construction of Connes' integral. In Section~\ref{sec:Lebesgue}, we deal with Question~B. In Section~\ref{sec:strongly-measurable}, we describe the main properties of strongly measurable operators. 
In Section~\ref{sec:Weyl}, we deal with Question~C by relating Connes' integration to the Weyl's laws for compact operators.  In Section~\ref{sec:Bir-Sol-PDOs}, we give a ``soft proof'' of Birman-Solomyak's Weyl's laws~(\ref{eq:Intro.Bir-Sol-selfadjoint})--(\ref{eq:Intro.Bir-Sol-|P|}); this deals with Question~D. In Section~\ref{sec:SC}, we deal with Question~E by explaining the link between Connes' integral and semiclassical Weyl's laws. 
Finally, in Appendix~\ref{sec:Appendix}, we gather a few results on Hilbert spaces embeddings that are needed in Section~\ref{sec:NCInt}.


\section{Quantized Calculus and NC Integral}\label{sec:NCInt} 
In this section, we present a purely spectral theoretic construction of Connes' integral. 
After a brief review of weak Schatten classes and Connes' quantized calculus, we give two constructions of the NC integral. The first one is given in terms of Dixmier traces and uses extended limits. The other construction is in terms of Tauberian operators. These two constructions are shown to give exactly the same notion of NC integral. This will answer Question~A.

\subsection{Weak Schatten Classes} \label{subsec:schatten}
We briefly review the main definitions and properties regarding Schatten and weak Schatten classes (see, e.g.,~\cite{Si:AMS05, GK:AMS69} for further details). 

Throughout this paper we let $\sH$ be a (separable) Hilbert space with inner product $\scal{\cdot}{\cdot}$. The algebra of  bounded linear operators on $\sH$ is denoted $\sL(\sH)$. The operator norm is denoted $\|\cdot\|$. We also denote by $\sK$ the (closed) ideal of compact operators on $\sH$. Given any operator $T\in \sK$ we let $(\mu_j(T))_{j\geq 0}$ be its sequence of \emph{singular values}, i.e., $\mu_j(T)$ is the $(j+1)$-th eigenvalue counted with multiplicity of the absolute value $|T|=\sqrt{T^*T}$. The \emph{min-max principle} states that
\begin{align}
 \mu_j(T)&=\min \left\{\|T_{|E^\perp}\|;\ \dim E=j\right\}.
 \label{eq:min-max} 
\end{align}
We record the following properties of singular values (see, e.g., \cite{GK:AMS69, Si:AMS05}), 
\begin{gather}
 \mu_j(T)=\mu_j(T^*)=\mu_j(|T|),
 \label{eq:Quantized.properties-mun1}\\
 \mu_{j+k}(S+T)\leq \mu_j(S) + \mu_k(T),
 \label{eq:Quantized.properties-mun2}\\
 \mu_j(ATB)\leq \|A\| \mu_j(T) \|B\|, \qquad A, B\in \sL(\sH), 
 \label{eq:Quantized.properties-mun3}.
\end{gather}
The inequality~(\ref{eq:Quantized.properties-mun2}) is known as Ky Fan's inequality.  

For $p\in (0,\infty)$ the Schatten class $\sL_p$ consist of operators $T\in \sK$ such that $|T|^p$ is trace-class. It is equipped with the quasi-norm, 
\begin{equation*}
 \|T\|_{p}:=\Tr\big(|T|^p\big)^{\frac1p}= \bigg( \sum_{j\geq 0}\mu_j(T)^p\bigg)^{\frac1p}, \qquad T\in \sL_p. 
\end{equation*}
We obtain a quasi-Banach ideal. For $p\geq 1$ the $\sL_p$-quasi-norm is actually a norm, and so in this case $\sL_p$ is a Banach ideal. In any case, the finite-rank operators on $\sH$ form a dense subspace of $\sL_p$. 

For $p\in (0,\infty)$, the weak Schatten class  $\sL_{p,\infty}$ is defined by
\begin{equation*}
 \sL_{p,\infty}:=\left\{T\in \sK; \ \mu_j(T)=\op{O}\big(j^{-\frac1p}\big)\right\}. 
\end{equation*}
 This is a two-sided ideal. We equip it with the quasi-norm,
\begin{equation}\label{def:lp_infty_quasinorm}
 \|T\|_{p,\infty}:=\sup_{j\geq 0}\;(j+1)^{\frac{1}{p}}\mu_j(T), \qquad T\in \sL_{p,\infty}. 
\end{equation}
For $p> 1$, the quasi-norm $\|\cdot \|_{p,\infty}$ is equivalent to the norm, 
\begin{equation*}
 \|T\|_{p,\infty}':=  \sup_{N\geq 1} N^{-1+\frac1{p}}\sum_{j<N} \mu_j(T), \qquad T\in \sL_{p,\infty}.
\end{equation*}
Thus, in this case $\sL_{p,\infty}$ is a Banach ideal with respect to that equivalent norm. In general (see, e.g., \cite{Si:TAMS76}), we have 
\begin{equation}
 \|S+T\|_{p,\infty}\leq 2^{\frac1p}\left(\|S\|_{p,\infty} + \|T\|_{p,\infty}\right), \qquad S,T\in \sL^{p,\infty}.
 \label{eq:Quantized.quasi-norm} 
\end{equation}
In addition, we denote by $(\sL_{p,\infty})_{0}$ the closure in $\sL_{p,\infty}$ of the finite-rank operators. We have
\begin{equation*}
 \big(\sL_{p,\infty}\big)_{0}=\left\{T\in \sK; \ \mu_j(T)=\op{o}\big(j^{-\frac1p}\big)\right\}.
\end{equation*}
We note the continuous inclusions, 
\begin{equation*}
 \sL_p \subsetneq  \big(\sL_{p,\infty}\big)_{0}\subsetneq \sL_{p,\infty}  \subsetneq \sL_{q}, \qquad 0<p<q.
\end{equation*}

In the following, we will also denote the Schatten and weak Schatten classes by $\sL_p(\sH)$ and $\sL_{p,\infty}(\sH)$ whenever there is a need to specify the Hilbert space. 

\subsection{Quantized calculus} \label{subsec:quantised_calculus}
 The main goal of the quantized calculus of Connes~\cite{Co:NCG} is to translate into the Hilbert space formalism of quantum mechanics the main tools of the classical infinitesimal calculus. 

\renewcommand{\arraystretch}{1.2}

\begin{center}
    \begin{tabular}{c|c}  
        Classical & Quantum \\ \hline       
       Complex variable & Operator on $\sH $  \\
      Real variable &  Selfadjoint operator on $\sH $  \\  
 Infinitesimal variable & Compact operator on $\sH $ \\
       Infinitesimal of order $\alpha>0$  & Compact operator $T$ such that\\ 
                      &  $\mu_{j}(T)=\op{O}(j^{-\alpha})$\\
    \end{tabular}
\end{center}

The first two lines arise from quantum mechanics. Intuitively speaking, an infinitesimal is meant to be smaller than any real number. For a bounded operator the condition $\|T\|<\epsilon$ for all $\epsilon>0$ gives $T=0$. This condition can be relaxed into the following: For every $\epsilon>0$ there is a finite-dimensional subspace $E$ of 
$\sH$ such that $\|T_{|E^\perp}\|<\epsilon$. This is equivalent to $T$ being a compact operator.

The order of compactness of a compact operator is given by the order of decay of its singular values. Namely, an \emph{infinitesimal operator} of order $\alpha>0$ is any compact operator such that $\mu_j(T)=\op{O}(j^{-\alpha})$. Thus, if we set $p=\alpha^{-1}$, then $T$ is {infinitesimal operator} of order $\alpha>0$ iff $T\in \sL_{p,\infty}$. 

The next line of the dictionary is the NC analogue of the integral. As an Ansatz the NC integral should be a linear functional satisfying at least the following conditions: 
\begin{enumerate}
 \item[(1)] It is defined on a suitable class of infinitesimal operators of order~1. 
 
 \item[(2)] It vanishes on infinitesimal operators of order~$> 1$. 
 
 \item[(3)] It takes non-negative values on positive operators. 
 
 \item[(4)] It is invariant under Hilbert space isomorphisms.  
 \end{enumerate}
As mentioned above, the infinitesimal operators of order~1 are the operators in the weak trace class $\sL_{1,\infty}$. The condition (3) means that the functional should be positive. The condition (4) forces the functional to be a trace, in the sense it is annihilated by the commutator subspace, 
\begin{equation*}
 \op{Com}\big(\sL_{1,\infty}\big):=\op{Span}\left\{[A,T];\ A\in \sL(\sH), \ T\in\sL_{1,\infty}\right\}. 
\end{equation*}
More precisely, it would be convenient to adopt the following definition of a trace. 

\begin{definition}
 If $\sE$ is a subspace of $\sL_{1,\infty}$ containing $\Com(\sL_{1,\infty})$ and $\sF$ is another vector space, then we say that a linear map $\varphi:\sE\rightarrow \sF$ is a \emph{trace} if it is annihilated by $\Com(\sL_{1,\infty})$. 
\end{definition}

To sum up the NC integral should be a positive trace $\bint: \sM\rightarrow \C$, where $\sM$ is a suitable subspace of $\sL_{1,\infty}$ containing the commutator subspace $\op{Com}(\sL_{1,\infty})$ and infinitesimal operators of order~$>1$.

\subsection{Eigenvalue sequences and commutators in $\sL_{1,\infty}$} 
If $A$ is a compact operator on $\sH$, then its spectrum can be arranged as a sequence $(\lambda_j(A))_{j\geq 0}$ converging to $0$ such that
\begin{equation*}
|\lambda_0(A)|\geq |\lambda_1(A)|\geq \cdots \geq  |\lambda_j(A)|\geq \cdots \geq 0,
\end{equation*}
where each eigenvalue is repeating according to its algebraic multiplicity, i.e., the dimension of the root space
$ E_\lambda(A) :=\bigcup_{\ell\geq 0} \ker(A-\lambda)^\ell$.  If $\lambda\neq 0$, the algebraic multiplicity is always finite (see, e.g., \cite{GK:AMS69}). It agrees with the geometric multiplicity whenever $A$ is normal.  

A sequence as above is called an \emph{eigenvalue sequence} for $A$. Such a sequence is not unique. If $A\geq 0$, then the eigenvalue sequence is unique and agrees with its singular value sequence $(\mu_j(T))_{j\geq 0}$. In general, an eigenvalue sequence need not be unique. 

In what follows by $(\lambda_j(A))_{j\geq 0} $ we shall always denote an eigenvalue sequence in the sense above.  

We record the Ky Fan's inequalities (see, e.g., \cite{GK:AMS69, Si:AMS05}). 
\begin{equation}
\big| \sum_{j<N} \lambda_j(A) \big| \leq  \sum_{j<N} \left|\lambda_j(A) \right|  \leq  \sum_{j<N} \mu_j(A) \qquad \forall N\geq 1. 
\label{eq:NC-Integral.Weyl-Ineq} 
\end{equation}

The approach of this section is based on the following asymptotic additivity result. 

\begin{lemma}[{\cite[Lemma~5.7.5]{LSZ:Book}}] \label{lem:NCInt.additivity}
 If $A$ and $B$ are operators in $\sL_{1,\infty}$, then
\begin{equation}
 \sum_{j<N} \lambda_j(A+B) = \sum_{j<N} \lambda_j(A)  + \sum_{j<N} \lambda_j(B) +\op{O}(1). 
 \label{eq:NCInt.additivity} 
\end{equation}
\end{lemma}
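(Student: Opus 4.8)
The plan is to reduce the asymptotic additivity~(\ref{eq:NCInt.additivity}) to the exact additivity of the operator trace on $\sL_1$ given by Lidskii's theorem, the passage from $\sL_1$ to $\sL_{1,\infty}$ being handled through the commutator structure of the weak trace ideal.

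First I would dispose of the elementary reductions. By Ky Fan's inequality~(\ref{eq:NC-Integral.Weyl-Ineq}) and the estimate $\sum_{j<N}\mu_j(T)\le\|T\|_{1,\infty}\sum_{j<N}(j+1)^{-1}=\op{O}(\log N)$ valid for $T\in\sL_{1,\infty}$, all three partial sums in~(\ref{eq:NCInt.additivity}) are $\op{O}(\log N)$; the content of the lemma is that the displayed combination is in fact bounded. Moreover the statement is insensitive to the choice of eigenvalue sequences: if $T\in\sL_{1,\infty}$ has exactly $K$ eigenvalues of modulus $\geq r$, then Weyl's inequality gives $r^K\leq\prod_{j<K}|\lambda_j(T)|\leq\prod_{j<K}\mu_j(T)\leq\|T\|_{1,\infty}^K/K!$, whence $Kr\le e\|T\|_{1,\infty}$; therefore a block of eigenvalues of equal modulus straddling the index $N$ contributes at most $e\|T\|_{1,\infty}$ to $\sum_{j<N}\lambda_j(T)$, and any two eigenvalue sequences of a given $T\in\sL_{1,\infty}$ have partial sums agreeing up to $\op{O}(1)$. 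Finally, for $A,B\in\sL_1$ the lemma is immediate: $\sum_{j<N}\lambda_j(T)\to\op{Tr}(T)$ by Lidskii's theorem and the trace is additive. What remains is to bridge from $\sL_1$ to $\sL_{1,\infty}$ keeping the error $\op{O}(1)$.

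My first instinct for this bridge is to represent the truncated eigenvalue sum of $T\in\sL_{1,\infty}$, up to $\op{O}(1)$, as a trace against a Riesz spectral projection: there is an increasing family of idempotents $P_N=P_N(T)$, each commuting with $T$, of rank $\approx N$ and carrying the $N$ largest eigenvalues of $T$, such that $\sum_{j<N}\lambda_j(T)=\op{Tr}(P_NT)+\op{O}(1)$. On the closed span of the root vectors of $T$ this is exact modulo the block estimate above; on the quasinilpotent complement one invokes Ringrose's triangular representation, the error remaining $\op{O}(1)$ thanks to the decay $\mu_j(T)=\op{O}(1/j)$. Granting this and writing $C=A+B$, $Q_N=P_N(C)$, linearity of the trace yields $\sum_{j<N}\lambda_j(A+B)=\op{Tr}(Q_NA)+\op{Tr}(Q_NB)+\op{O}(1)$, so it remains to compare $\op{Tr}(Q_NA)$ with $\sum_{j<N}\lambda_j(A)=\op{Tr}(P_N(A)A)+\op{O}(1)$, and likewise for $B$.

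This comparison is the main obstacle, and it is precisely where the fine structure of $\sL_{1,\infty}$ is indispensable: the idempotents $Q_N$, $P_N(A)$, $P_N(B)$ are adapted to $A+B$, $A$, $B$ separately and bear no a priori relation to one another (take $B$ close to $-A$), while the crude bound $|\op{Tr}((Q_N-P_N(A))A)|\le\|Q_N-P_N(A)\|_1\,\|A\|$ only gives $\op{O}(N)$, and $\op{O}(\log N)$ after distributing singular values over the rank-$\op{O}(N)$ difference. Overcoming this amounts to the assertion that $T\mapsto\big[\sum_{j<N}\lambda_j(T)\big]_{N}$ descends to a well-defined linear functional on $\sL_{1,\infty}/\op{Com}(\sL_{1,\infty})$; the decisive input is the eigenvalue characterization of the commutator subspace in the spirit of Dykema--Figiel--Weiss--Wodzicki (\emph{cf.}~\cite{LSZ:Book}), to the effect that $\sum_{j<N}\lambda_j(T)=\op{O}(1)$ exactly when $T\in\op{Com}(\sL_{1,\infty})$, which I would import rather than reprove. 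One auxiliary, purely arithmetic ingredient is also needed, namely the reconciliation of the ``same-$N$'' truncation of $A+B$ with the top-$N$ partial sum of the merged family $\{\lambda_j(A)\}\cup\{\lambda_j(B)\}$; this is elementary, for if $\rho_N$ is the modulus of its $N$-th largest member then Weyl's inequality again bounds the number of its members of modulus $>t$ by $e(\|A\|_{1,\infty}+\|B\|_{1,\infty})/t$, so $N\rho_N=\op{O}(1)$, and writing the merged top-$N$ sum as $\sum_{j<a}\lambda_j(A)+\sum_{j<b}\lambda_j(B)$ with $a+b=N$ one has $\big|\sum_{j<N}\lambda_j(A)+\sum_{j<N}\lambda_j(B)-\sum_{j<a}\lambda_j(A)-\sum_{j<b}\lambda_j(B)\big|\leq a\rho_N+b\rho_N=N\rho_N=\op{O}(1)$. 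In sum: granted the $\op{O}(1/j)$ decay everything is elementary except the commutator-subspace statement, which I expect to be the hard core of the argument.
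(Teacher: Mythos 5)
Your elementary ingredients are fine (the Weyl-inequality bound $Kr\le e\|T\|_{1,\infty}$, the $\op{O}(1)$ insensitivity to the choice of eigenvalue sequence, the merged-family arithmetic, and the Riesz-projection identity $\sum_{j<N}\lambda_j(T)=\Tr(P_NT)+\op{O}(1)$), but the step you yourself identify as the main obstacle is never carried out, and the way you propose to discharge it is circular. The input you want to import, namely that $\sum_{j<N}\lambda_j(T)=\op{O}(1)$ if and only if $T\in\Com(\sL_{1,\infty})$, is a statement about a \emph{single} operator and by itself says nothing about how the partial eigenvalue sums of $A+B$ compare with those of $A$ and $B$: eigenvalues are not additive, so knowing that the eigenvalue sums of some commutator are bounded does not let you pass from $\Tr(Q_NA)$ to $\Tr(P_N(A)A)$, nor does it show that $T\mapsto\{\sum_{j<N}\lambda_j(T)\}_N$ is additive modulo bounded sequences --- that additivity is precisely the lemma you are proving. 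What would suffice is the two-operator characterization~(\ref{eq:NCInt.A-B-Com}) of Proposition~\ref{prop:NCint.Com-DFWWW} (then one can finish by comparing $(A+B)\oplus 0$ with $A\oplus B$ on $\sH\oplus\sH$, whose difference $0\oplus B-B\oplus 0$ is a commutator), but that statement is strictly deeper: in the logical order of the paper and of~\cite[\S5.7]{LSZ:Book}, Lemma~\ref{lem:NCInt.additivity} is the ingredient from which Corollary~\ref{cor:NCint.Com} and the characterization of $\Com(\sL_{1,\infty})$ are derived, not the other way around. So your plan proves the lemma only by assuming a result of at least the same depth, i.e.\ it is not an independent proof.

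Note also that the paper itself does not reprove this lemma; it is quoted verbatim from \cite[Lemma~5.7.5]{LSZ:Book}, whose proof proceeds by a direct analysis of eigenvalue sums (triangularization and trace comparisons exploiting the $\op{O}(j^{-1})$ decay), not by reduction to the commutator-space characterization. If you want a self-contained argument, the work you must do is exactly the comparison you deferred: estimate $\sum_{j<N}\lambda_j(A+B)-\sum_{j<N}\lambda_j(A)-\sum_{j<N}\lambda_j(B)$ by $\op{O}(1)$ using only Ky Fan-type inequalities and the weak-$\ell_1$ decay, without invoking any description of $\Com(\sL_{1,\infty})$.
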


\begin{remark}\label{rmk:NCint.two-eig-seq}
 For $B=0$ the above result shows that if $(\lambda_j(A))_{j\geq 0}$ and $(\lambda_j'(A))_{j\geq 0}$ are two eigenvalue sequences of $A$, then 
\begin{equation}
 \sum_{j<N} \lambda_j'(A) = \sum_{j<N} \lambda_j(A) + \op{O}(1). 
 \label{eq:NCInt.additivity-A}
\end{equation} 
\end{remark}

\begin{remark}[see~{\cite[Lemma~5.7.1]{LSZ:Book}}] \label{rmk:NCInt.self-eig}
Suppose that $A=A^*\in \sL_{1,\infty}$. Let $(\pm\lambda_j^{\pm}(A))_{j\geq 0}$ be the sequence of positive/negative eigenvalues of $A$. That is, $\lambda_j^\pm(A)=\lambda_j(A^\pm)=\mu_j(A^\pm)$, where $A^{\pm}=\frac12(|A|\pm A)$ are the positive and negative parts of $A$. As $A=A^+-A^{-}$ we get
\begin{equation*}
 \sum_{j<N} \lambda_j(A) = \sum_{j<N} \big(\lambda_j^+(A)-\lambda^{-}_j(A)\big) + \op{O}(1). 
\end{equation*}
\end{remark}

\begin{remark}\label{rmk:NCInt.real-imaginary-parts} 
 Given $A\in \sL_{1,\infty}$, let $\Re A=\frac12(A+A^*)$ and $\Im A= \frac1{2i}(A-A^*)$ be its real and imaginary parts. Then we have 
 \begin{equation*}
  \sum_{j<N} \lambda_j(A) = \sum_{j<N} \big(\lambda_j(\Re A)+i  \lambda_j(\Im A)\big)  +\op{O}(1). 
\end{equation*}
As $\lambda_j(\Re A)$ and  $\lambda_j(\Im A)$ are real numbers, we get 
\begin{gather*}
 \Re\big(\sum_{j<N} \lambda_j(A) \big)=  \sum_{j<N} \Re\left(\lambda_j(A) \right)= \sum_{j<N} \lambda_j(\Re A) +\op{O}(1),\\
 \Im\big(\sum_{j<N} \lambda_j(A) \big)=  \sum_{j<N} \Im\left(\lambda_j(A) \right)= \sum_{j<N} \lambda_j(\Im A) +\op{O}(1). 
\end{gather*}
\end{remark}

We have the following consequence of Lemma~\ref{lem:NCInt.additivity}. 

\begin{corollary}\label{cor:NCint.Com}
 If $A\in \Com(\sL_{1,\infty})$, then
 \begin{equation*}
 \sum_{j<N} \lambda_j(A)=\op{O}(1). 
\end{equation*}
 \end{corollary}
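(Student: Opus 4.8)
The plan is to reduce to a single commutator $[B,T]$ with $B\in\sL(\sH)$ and $T\in\sL_{1,\infty}$, and then to exploit the classical fact that $BT$ and $TB$ have the same non-zero eigenvalues with the same multiplicities. First I would observe that a general element of $\Com(\sL_{1,\infty})$ is a finite linear combination $\sum_{k=1}^m c_k[B_k,T_k]$ with $c_k\in\C$, $B_k\in\sL(\sH)$, $T_k\in\sL_{1,\infty}$. Since $(c\lambda_j(A))_{j\geq 0}$ is an eigenvalue sequence for $cA$, and since iterating Lemma~\ref{lem:NCInt.additivity} gives $\sum_{j<N}\lambda_j\big(\sum_k c_k[B_k,T_k]\big)=\sum_k c_k\sum_{j<N}\lambda_j([B_k,T_k])+\op{O}(1)$, it suffices to prove $\sum_{j<N}\lambda_j([B,T])=\op{O}(1)$ for each generator.

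Next I would write $[B,T]=BT-(TB)$, noting that both $BT$ and $TB$ lie in $\sL_{1,\infty}$ because it is a two-sided ideal, so Lemma~\ref{lem:NCInt.additivity} applies to the pair $BT$ and $-TB$ and yields
\[
\sum_{j<N}\lambda_j([B,T])=\sum_{j<N}\lambda_j(BT)+\sum_{j<N}\lambda_j(-TB)+\op{O}(1).
\]
Since $(-\lambda_j(TB))_{j\geq 0}$ is an eigenvalue sequence for $-TB$, the middle sum equals $-\sum_{j<N}\lambda_j(TB)$. Now I would invoke the standard fact that for $B$ bounded and $T$ compact, $BT$ and $TB$ have the same non-zero spectrum, with equal algebraic multiplicities for each non-zero eigenvalue; hence one may choose a \emph{common} eigenvalue sequence for $BT$ and $TB$, and by Remark~\ref{rmk:NCint.two-eig-seq} any two eigenvalue sequences have partial sums differing by $\op{O}(1)$. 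Therefore $\sum_{j<N}\lambda_j(BT)-\sum_{j<N}\lambda_j(TB)=\op{O}(1)$, which gives $\sum_{j<N}\lambda_j([B,T])=\op{O}(1)$ and completes the argument.

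The only non-formal ingredient is the coincidence of the non-zero spectra of $BT$ and $TB$ together with their multiplicities; everything else is bookkeeping around Lemma~\ref{lem:NCInt.additivity} (scaling, finite sums, sign changes). I expect this spectral-coincidence step to be the main point to get right — in particular the equality of \emph{algebraic} (not merely geometric) multiplicities, which requires checking that for $\lambda\neq 0$ the operator $T$ restricts to a linear isomorphism of the root spaces $E_\lambda(TB)\to E_\lambda(BT)$. This is classical (see, e.g., \cite{GK:AMS69}), so in the write-up I would simply cite it rather than reprove it.
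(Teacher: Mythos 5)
Your argument is correct, but its key spectral ingredient differs from the paper's. The paper first writes the bounded factor as a linear combination of unitaries (four suffice), so that every generator of $\Com(\sL_{1,\infty})$ becomes, up to the bookkeeping afforded by Lemma~\ref{lem:NCInt.additivity}, a difference $C-U^*CU$ with $C\in\sL_{1,\infty}$ and $U$ unitary; the only spectral fact then needed is the trivial invariance of eigenvalue sequences under unitary conjugation. You instead attack the commutator directly, splitting $[B,T]=BT-TB$ and invoking the classical theorem that for $B$ bounded and $T$ compact the operators $BT$ and $TB$ have the same non-zero eigenvalues with the same algebraic multiplicities, so that Remark~\ref{rmk:NCint.two-eig-seq} makes the two partial sums cancel up to $\op{O}(1)$. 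Both routes use the asymptotic additivity lemma in exactly the same way; the trade-off is that your proof is shorter and avoids the unitary decomposition, at the price of importing the $BT$-versus-$TB$ spectral coincidence (which, as you say, is standard and citable, e.g.\ to \cite{GK:AMS69}), whereas the paper's detour through unitaries keeps the spectral input completely elementary. One small slip in your aside: the intertwining relation $(TB-\lambda)^{\ell}T=T(BT-\lambda)^{\ell}$ shows that $T$ maps the root space $E_\lambda(BT)$ into $E_\lambda(TB)$ (and $B$ maps $E_\lambda(TB)$ into $E_\lambda(BT)$), not the direction you wrote; injectivity of these maps for $\lambda\neq 0$ then gives the equality of algebraic multiplicities. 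Since you cite the fact rather than prove it, this does not affect the validity of the argument.
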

\begin{proof}
 By definition $\Com(\sL_{1,\infty})$ is spanned by commutators  of the form $[T,A]$ with $T\in \sL(\sH)$ and $A\in \sL_{1,\infty}$. Any $T\in \sL(\sH)$ is a linear combination of unitary operators (see, e.g., \cite[Section~VI.6]{RS1:1980}). Thus, $\Com(\sL_{1,\infty})$ is spanned by operators of the form $[U,A]=UA-U^*(UA)U$ with $A\in \sL_{1,\infty}$ and $U\in \sL(\sH)$ unitary. Combining with the asymptotic additivity~(\ref{eq:NCInt.additivity}) of sums of eigenvalues, we see that it is enough to prove the result for operators of the form $A=U^*BU-U$ with $B\in \sL_{1,\infty}$ and $U\in \sL_{1,\infty}$ unitary. However, in this case any eigenvalue sequence for $B$ is an eigenvalue sequence for 
 $U^*BU$. Therefore, by using~(\ref{eq:NCInt.additivity}) we get 
 \begin{equation*}
 \sum_{j<N} \lambda_j(A)= \sum_{j<N} \lambda_j\left(U^*BU\right) - \sum_{j<N} \lambda_j(B)+\op{O}(1)= \op{O}(1). 
\end{equation*}
The proof is complete. 
\end{proof}

We have a converse to Corollary~\ref{cor:NCint.Com}. More precisely, we have the following result. 

\begin{proposition}[\cite{DFWW:AIM04, LSZ:Book}] \label{prop:NCint.Com-DFWWW}
If $A$ and $B$ are operators in $\sL_{1,\infty}$, then 
 \begin{equation}
 A-B\in  \Com\big(\sL_{1,\infty}\big) \Longleftrightarrow  \sum_{j<N} \lambda_j(A)=\sum_{j<N} \lambda_j(B)+\op{O}(1).
 \label{eq:NCInt.A-B-Com}  
\end{equation}
\end{proposition}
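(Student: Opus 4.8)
The plan is to prove the two implications in~\eqref{eq:NCInt.A-B-Com} separately, with the hard direction being the forward one, which is exactly the content cited to~\cite{DFWW:AIM04, LSZ:Book}. I would begin by observing that the right-hand side of~\eqref{eq:NCInt.A-B-Com} is equivalent, via the asymptotic additivity of Lemma~\ref{lem:NCInt.additivity} applied to $A=(A-B)+B$, to the statement $\sum_{j<N}\lambda_j(A-B)=\op{O}(1)$. Hence, setting $C:=A-B\in\sL_{1,\infty}$, it suffices to prove
\begin{equation*}
 C\in\Com\big(\sL_{1,\infty}\big) \Longleftrightarrow \sum_{j<N}\lambda_j(C)=\op{O}(1).
\end{equation*}
The implication ``$\Longrightarrow$'' is precisely Corollary~\ref{cor:NCint.Com}, which is already established, so only ``$\Longleftarrow$'' remains.

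For the remaining implication I would first reduce to the selfadjoint case. Using the real/imaginary part decomposition of Remark~\ref{rmk:NCInt.real-imaginary-parts}, the hypothesis $\sum_{j<N}\lambda_j(C)=\op{O}(1)$ forces both $\sum_{j<N}\lambda_j(\Re C)=\op{O}(1)$ and $\sum_{j<N}\lambda_j(\Im C)=\op{O}(1)$; since $C=\Re C+i\,\Im C$ and $\Com(\sL_{1,\infty})$ is a complex subspace, it is enough to treat $C=C^*\in\sL_{1,\infty}$. For such $C$, write $C=C^+-C^-$ with $C^\pm\ge 0$; by Remark~\ref{rmk:NCInt.self-eig} the hypothesis becomes $\sum_{j<N}\lambda_j^+(C)-\sum_{j<N}\lambda_j^-(C)=\op{O}(1)$, i.e.\ the two positive operators $C^+$ and $C^-$ satisfy $\sum_{j<N}\mu_j(C^+)=\sum_{j<N}\mu_j(C^-)+\op{O}(1)$. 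So the whole matter comes down to the following statement about \emph{positive} operators $S,T\in\sL_{1,\infty}$: if $\sum_{j<N}\mu_j(S)=\sum_{j<N}\mu_j(T)+\op{O}(1)$, then $S-T\in\Com(\sL_{1,\infty})$.

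To prove this positive-operator statement I would invoke the Dykema--Figiel--Weiss--Wodzicki spectral characterization of commutator spaces of ideals. The key combinatorial fact behind it is that for positive operators, control of the partial sums $\sum_{j<N}\mu_j(\cdot)$ at dyadic scales — equivalently, boundedness of $\sum_{j<2^n}\mu_j(S)-\sum_{j<2^n}\mu_j(T)$ and the Cesàro/averaging conditions of~\cite{DFWW:AIM04} — is exactly what is needed to write $S-T$ as a finite sum of commutators $[A_k,T_k]$ with $A_k\in\sL(\sH)$, $T_k\in\sL_{1,\infty}$. Concretely, one diagonalizes $S$ and $T$, arranges their spectral data so that the partial-sum hypothesis translates into the ``$\frac12$-bounded averaging'' condition, and then applies~\cite[Theorem~?]{DFWW:AIM04} (or the packaged version~\cite[Theorem~5.7.x]{LSZ:Book}) which says that an operator lies in the commutator space of a geometrically stable ideal iff its eigenvalue sequence has this bounded-averaging property. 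I expect \textbf{this last step to be the main obstacle}: it is the only genuinely nontrivial input, it relies on the full strength of the DFWW machinery rather than on anything proved in the excerpt, and care is needed to check that the ideal $\sL_{1,\infty}$ satisfies the geometric-stability hypothesis required by that theorem and that the $\op{O}(1)$ partial-sum condition matches their averaging condition on the nose (this is where a factor-of-$\log$ or a dyadic-block rearrangement could go wrong). Everything else — the additivity reduction, the real/imaginary and positive/negative splittings, and the direction already covered by Corollary~\ref{cor:NCint.Com} — is routine bookkeeping.
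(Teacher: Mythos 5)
Your reductions are all sound, and in fact they mirror what the paper itself does with this statement: the paper gives \emph{no} proof of Proposition~\ref{prop:NCint.Com-DFWWW} at all, but simply quotes it from \cite{DFWW:AIM04} and \cite[\S 5.7]{LSZ:Book}, remarking that for $\sL_{1,\infty}$ a simpler proof than the general Dykema--Figiel--Weiss--Wodzicki machinery is available in the latter reference. Concretely: your use of Lemma~\ref{lem:NCInt.additivity} to replace the two-operator condition by $\sum_{j<N}\lambda_j(A-B)=\op{O}(1)$, the easy implication via Corollary~\ref{cor:NCint.Com}, and the splitting into real/imaginary and then positive/negative parts are all correct bookkeeping. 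The one genuinely hard implication (bounded partial sums of an eigenvalue sequence imply membership in $\Com(\sL_{1,\infty})$) is, in your write-up as in the paper, an appeal to the cited literature, so you have not produced an independent proof of the substantive content --- but neither does the paper, so there is no gap relative to it.

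Two clarifications on your flagged worries. First, the DFWW characterization in its eigenvalue form states that $T\in\Com(\Jc)$ if and only if $\diag\big\{\tfrac1n\sum_{j\leq n}\lambda_j(T)\big\}\in\Jc$; for $\Jc=\sL_{1,\infty}$ this reads exactly $\sum_{j<N}\lambda_j(T)=\op{O}(1)$, so the matching of conditions is on the nose and no dyadic or logarithmic renormalization intervenes. Moreover this form of the theorem applies directly to the (non-normal, non-positive) operator $C=A-B$, so if you are willing to use the full DFWW result, your reduction to positive operators is not needed at all; that reduction is rather the first step of the ``much simpler'' argument for $\sL_{1,\infty}$ in \cite[\S 5.7]{LSZ:Book}, where the commutator is then constructed by hand (a dyadic block construction), not by citing \cite{DFWW:AIM04}. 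Second, geometric stability of the ideal is not an obstruction here: it enters when one wants to pass from eigenvalues to singular values, and in any case $\sL_{1,\infty}$ is geometrically stable, so that concern is moot.
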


\begin{remark}
 Proposition~\ref{prop:NCint.Com-DFWWW} is a special case of a deep characterization of the commutator spaces of compact operator ideals due to Dykema-Figiel-Weiss-Wodzicki~\cite{DFWW:AIM04}. However, in the special case of $\sL_{1,\infty}$ the proof is much simpler (see~\cite[\S5.7]{LSZ:Book}). 
\end{remark}

\subsection{The noncommutative integral in terms of Dixmier traces}\label{subsec:NCInt.Dixmier}
We shall now recall the construction of the NC integral in terms of Dixmier traces. Our construction deviates a bit from the standard constructions of Dixmier~\cite{Di:CRAS66} and Connes~\cite{Co:NCG}, since we work on the weak trace class, rather than the Dixmier-Macaev ideal. The approach is solely based on using the asymptotic additivity property provided by Lemma~\ref{lem:NCInt.additivity}. 
The exposition is also partly inspired by the construction of Dixmier traces by Connes-Moscovici~\cite[Appendix A]{CM:GAFA95}. 

In what follows we denote by $\ell_\infty$ the $C^*$-algebra of bounded sequences $(a_N)_{N\geq 1}\subset \C$. We also let $\ell_0$ be the closed ideal of sequences converging to $0$. We then endow the quotient $\ell_\infty/\ell_0$ with its quotient $C^*$-algebra structure. 

If $A\in \sL_{1,\infty}$, the Ky Fan's inequalities~(\ref{eq:NC-Integral.Weyl-Ineq}) imply that
\begin{equation}
\bigg| \frac1{\log N} \sum_{j<N} \lambda_j(A)\bigg| \leq  \frac1{\log N} \sum_{j<N} \mu_j(A)\leq C\|A\|_{1,\infty}, 
\label{eq:NCInt.Weyl-norm-ineq}  
\end{equation}
where the constant $C$ does not depend on $A$. (We make the convention that $(\log N)^{-1}=0$ for $N=1$). Thus, the sequence $\{(\log N)^{-1} \sum_{j<N} \lambda_j(A)\}_{N\geq 1}$ is bounded. Moreover, it follows from Remark~\ref{rmk:NCint.two-eig-seq} that, if $(\lambda_j'(A))_{j\geq 0}$ is another eigenvalue sequence for $A$, then 
\begin{equation*}
 \frac1{\log N} \sum_{j<N} \lambda_j'(A)=  \frac1{\log N} \sum_{j<N} \lambda_j(A) +\op{o}(1). 
\end{equation*}
Thus, the class of  $\{\frac1{\log N} \sum_{j<N} \lambda_j(A)\}_{N\geq 1}$ in  $\ell_\infty/\ell_0$ does not depend on the choice of the eigenvalue sequence 
$(\lambda_j(A))_{j\geq 0}$. Therefore, we have a well defined map $\tau:\sL_{1,\infty} \rightarrow \ell_\infty/\ell_0$ given by 
\begin{equation*}
 \tau(A)= \textup{class of}\ \bigg\{\frac1{\log N} \sum_{j<N} \lambda_j(A)\bigg\}_{N\geq 1}\ \textup{in}\  \ell_\infty/\ell_0. 
\end{equation*}

\begin{lemma}\label{lem:NCInt.tau}
 The map $\tau:\sL_{1,\infty} \rightarrow \ell_\infty/\ell_0$  is a positive continuous linear trace.  It is  annihilated by operators in $(\sL_{1,\infty})_0$, including infinitesimal operators of order~$>1$. 
\end{lemma}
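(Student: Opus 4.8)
The plan is to verify the four asserted properties one at a time, each being a short consequence of the material already in place. Throughout write $t_N(A):=(\log N)^{-1}\sum_{j<N}\lambda_j(A)$, so that $\tau(A)$ is the class of $(t_N(A))_{N\ge 1}$ in $\ell_\infty/\ell_0$; recall from the discussion preceding the lemma that this class is independent of the choice of eigenvalue sequence, that the norm of a class $[(a_N)]$ in $\ell_\infty/\ell_0$ is $\limsup_N|a_N|$, and that such a class is positive precisely when $\liminf_N \Re(a_N)\ge 0$.

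First I would treat linearity and continuity. Homogeneity of $\tau$ is immediate from $\lambda_j(cA)=c\,\lambda_j(A)$. For additivity, fix eigenvalue sequences for $A$, $B$ and $A+B$ and apply Lemma~\ref{lem:NCInt.additivity}: dividing the identity $\sum_{j<N}\lambda_j(A+B)=\sum_{j<N}\lambda_j(A)+\sum_{j<N}\lambda_j(B)+\op{O}(1)$ by $\log N$ sends the $\op{O}(1)$ term into $\ell_0$, so $\tau(A+B)=\tau(A)+\tau(B)$. Continuity is simply read off~(\ref{eq:NCInt.Weyl-norm-ineq}), which says $\limsup_N|t_N(A)|\le C\|A\|_{1,\infty}$, i.e.\ $\|\tau(A)\|_{\ell_\infty/\ell_0}\le C\|A\|_{1,\infty}$; thus $\tau$ is a bounded, hence continuous, linear map from the quasi-Banach space $\sL_{1,\infty}$ to the Banach space $\ell_\infty/\ell_0$.

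Next I would do positivity and the trace property. If $A\ge 0$, its eigenvalue sequence coincides with its (non\-negative) singular value sequence, so $t_N(A)\ge 0$ for every $N$ and $\tau(A)\ge 0$ in $\ell_\infty/\ell_0$. That $\tau$ vanishes on $\Com(\sL_{1,\infty})$ is exactly Corollary~\ref{cor:NCint.Com}: for $A\in\Com(\sL_{1,\infty})$ one has $\sum_{j<N}\lambda_j(A)=\op{O}(1)$, hence $t_N(A)=\op{o}(1)$ and $\tau(A)=0$. So $\tau$ is a positive continuous linear trace in the sense of the Definition.

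Finally, for the vanishing on $(\sL_{1,\infty})_0$ I would use the description $(\sL_{1,\infty})_0=\{T:\mu_j(T)=\op{o}(j^{-1})\}$ together with the Ky Fan inequalities~(\ref{eq:NC-Integral.Weyl-Ineq}), which give $|\sum_{j<N}\lambda_j(A)|\le\sum_{j<N}\mu_j(A)$. It then suffices to check $\sum_{j<N}\mu_j(A)=\op{o}(\log N)$ when $j\mu_j(A)\to 0$: given $\varepsilon>0$, pick $J$ with $\mu_j(A)<\varepsilon/j$ for $j\ge J$, split the sum at $J$, and compare the tail with the harmonic series to get $\sum_{j<N}\mu_j(A)\le C_\varepsilon+\varepsilon(\log N+1)$; hence $\limsup_N t_N(A)\le\varepsilon$, and letting $\varepsilon\to 0$ yields $\tau(A)=0$. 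An infinitesimal operator of order $\alpha>1$ has $\mu_j=\op{O}(j^{-\alpha})=\op{o}(j^{-1})$, so it lies in $(\sL_{1,\infty})_0$ and is therefore annihilated by $\tau$. There is no real obstacle here: the only genuine computation is this last $\op{o}(\log N)$ estimate, and the one point that needs care is correctly invoking the meaning of ``positive'' and of the norm in the $C^\ast$-algebra $\ell_\infty/\ell_0$; everything else is a direct appeal to Lemma~\ref{lem:NCInt.additivity}, Corollary~\ref{cor:NCint.Com}, and~(\ref{eq:NCInt.Weyl-norm-ineq}).
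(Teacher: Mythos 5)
Your proposal is correct and follows essentially the same route as the paper: homogeneity from eigenvalue sequences, additivity from Lemma~\ref{lem:NCInt.additivity}, continuity from~(\ref{eq:NCInt.Weyl-norm-ineq}), the trace property from Corollary~\ref{cor:NCint.Com}, and vanishing on $(\sL_{1,\infty})_0$ via the Ky Fan inequalities~(\ref{eq:NC-Integral.Weyl-Ineq}) and the $\op{o}(\log N)$ estimate (which you spell out in more detail than the paper does). One minor imprecision: a class $[(a_N)]$ in $\ell_\infty/\ell_0$ is positive iff $\Im a_N\rightarrow 0$ \emph{and} $\liminf_N \Re(a_N)\geq 0$, not merely the latter; since you apply only the easy direction to the real non-negative sequence $t_N(A)$, your argument is unaffected.
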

\begin{proof}
 Let $A\in \sL_{1,\infty}$, and let $(\lambda_j(A))_{j\geq 0}$ be an eigenvalue sequence. If $c\in \C$, then $(c\lambda_j(A))_{j\geq 0}$ is an eigenvalue sequence for $cA$, and hence 
\begin{equation*}
 \tau(cA)=  \textup{class of}\ \bigg\{\frac1{\log N} \sum_{j<N} c\lambda_j(A)\bigg\}_{N\geq 1} =  c\tau(A). 
\end{equation*}
 If $B\in \sL_{1,\infty}$, then it follows from Lemma~\ref{lem:NCInt.additivity} that
\begin{align*}
\frac1{\log N} \sum_{j<N} \lambda_j(A+B)&= \frac1{\log N} \sum_{j<N} \lambda_j(A) + \frac1{\log N} \sum_{j<N} \lambda_j(B)+\op{o}(1). 
\end{align*}
Thus, $\tau(A+B)=\tau(A)+\tau(B)$.  In addition, it follows from~(\ref{eq:NCInt.Weyl-norm-ineq}) that
\begin{equation}
 \|\tau(A)\| \leq \sup_{N\geq 1} \frac1{\log N} \bigg|\sum_{j<N} \lambda_j(A)\bigg| \leq C\|A\|_{1,\infty}.
 \label{eq:NCInt.continuity-tau} 
\end{equation}
Therefore, we see that $\tau$ is a continuous linear map. 

It is immediate that if $A$ has non-negative eigenvalues, then $\tau(A)$ is a positive element of $\ell_\infty/\ell_0$. Thus, $\tau$ is a positive linear map. Furthermore, it follows from Corollary~\ref{cor:NCint.Com} that if $A\in \Com(\sL_{1,\infty})$, then $(\log N)^{-1}\sum_{j<N} \lambda_j(A)$ is $\op{o}(1)$, and hence $\tau(A)=0$. Thus, $\tau$ is a trace. Likewise, if $A\in (\sL_{1,\infty})_0$, then $(\log N)^{-1}\sum_{j<N} \lambda_j(A)=\op{o}(1)$, and hence $\tau(A)=0$. Thus, $\tau$ is annihilated by $(\sL_{1,\infty})_0$. The proof is complete. 
\end{proof}

Recall that a state on unital $C^*$-algebra $\sA$ is a positive linear  functional $\omega:\sA\rightarrow \C$ such that $\omega(1)=1$. Every state on $\sA$ is continuous. Moreover, it follows from the Hahn-Banach theorem that the states separate the points of $\sA$. If $\omega$ is a state on the quotient $C^*$-algebra $\ell_\infty/\ell_0$, then it lifts to a state $\lim_\omega:\ell_\infty \rightarrow \C$ which annihilates $\ell_0$. Namely, $\lim_\omega=\omega\circ \pi$, where $\pi:\ell_\infty \rightarrow \ell_\infty/\ell_0$ is the canonical projection. Such a state is called an \emph{extended limit}. Conversely, any extended limit uniquely descends to a state on $\ell_\infty/\ell_0$. Therefore, we have a one-to-one correspondance between extended limits and states on $\ell_\infty/\ell_0$.

If $\lim_\omega$ is an extended limit, then its positivity implies that, for every sequence $a=(a_N)_{N\geq 1}\in \ell_\infty$, we have
\begin{equation*}
 \liminf a_N \leq \limw a \leq \limsup a_N. 
\end{equation*}
Furthermore, as the states on $\ell_\infty/\ell_0$ form a separating family of linear functionals, we have 
\begin{equation}
\lim_{N\rightarrow \infty} a_N=L \Longleftrightarrow \big( a- L\in \ell_0\big) \Longleftrightarrow  \big( \limw a = L\quad \forall \omega \big).
\label{eq:NCInt.extended-limit} 
\end{equation}

Given any extended limit $\lim_\omega$ we define $\Tr_\omega:\sL_{1,\infty} \rightarrow \C$ by
\begin{equation*}
  \Trw(A) = \limw \frac{1}{\log N} \sum_{j<N} \lambda_j(A), \qquad A \in \sL_{1,\infty}. 
\end{equation*}
Thus, if $A\in \sL_{1,\infty}$, then we have 
\begin{equation*}
 \Trw(A)= \omega \circ \pi\bigg[ \bigg\{ \frac{1}{\log N} \sum_{j<N} \lambda_j(A)\bigg\}_{N\geq 1}\bigg] =\omega\big[\tau(A)\big].
\end{equation*}
Therefore, in view Lemma~\ref{lem:NCInt.tau} we immediately obtain the following result. 

\begin{proposition}\label{prop:NCInt.Dixmier-trace} 
$\Tr_\omega:\sL_{1,\infty} \rightarrow \C$ is a positive continuous linear trace. It is annihilated by operators in $(\sL_{1,\infty})_0$, including infinitesimal operators of order~\mbox{$>1$}.
\end{proposition}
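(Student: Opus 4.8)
The plan is to deduce everything from the factorization $\Trw=\omega\circ\tau$ recorded in the display immediately preceding the statement, where $\tau:\sL_{1,\infty}\to\ell_\infty/\ell_0$ is the positive continuous linear trace of Lemma~\ref{lem:NCInt.tau} and $\omega$ is the state on $\ell_\infty/\ell_0$ defining the extended limit $\limw$. Since Lemma~\ref{lem:NCInt.tau} already establishes all the structural properties of $\tau$, the only thing to verify is that each of them is preserved upon post-composition with $\omega$, and this is automatic because $\omega$ is a state. All of the genuine analytic content — the boundedness estimate and, crucially, the use of the asymptotic additivity of Lemma~\ref{lem:NCInt.additivity} to obtain additivity and the commutator vanishing — has been absorbed into Lemma~\ref{lem:NCInt.tau}, so I do not expect any real obstacle here; the present statement is essentially its image under an arbitrary state.

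Concretely, I would proceed as follows. Linearity of $\Trw$ is clear, being a composite of the linear maps $\tau$ and $\omega$. For positivity: if $A\in\sL_{1,\infty}$ has non-negative eigenvalues then $\tau(A)$ is a positive element of $\ell_\infty/\ell_0$ by Lemma~\ref{lem:NCInt.tau}, and $\omega$ is positive, so $\Trw(A)=\omega[\tau(A)]\geq 0$. For continuity, I would combine the bound $\|\tau(A)\|\leq C\|A\|_{1,\infty}$ from~(\ref{eq:NCInt.continuity-tau}) with the fact that a state on a unital $C^*$-algebra has norm one, obtaining $|\Trw(A)|\leq C\|A\|_{1,\infty}$ with the same constant $C$.

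The trace property and the vanishing on $(\sL_{1,\infty})_0$ then follow in the same manner: by Lemma~\ref{lem:NCInt.tau}, $\tau$ annihilates both $\Com(\sL_{1,\infty})$ and $(\sL_{1,\infty})_0$, hence so does $\Trw=\omega\circ\tau$, which makes $\Trw$ a trace in the sense defined above. To conclude the last sentence of the statement, one notes that an infinitesimal operator $T$ of order $\alpha>1$ satisfies $\mu_j(T)=\op{O}(j^{-\alpha})=\op{o}(j^{-1})$, so $T$ belongs to $(\sL_{1,\infty})_0=\{T\in\sK;\ \mu_j(T)=\op{o}(j^{-1})\}$ and is therefore annihilated by $\Trw$. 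This completes the plan.
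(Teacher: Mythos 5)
Your proposal is correct and follows exactly the paper's route: the paper also obtains Proposition~\ref{prop:NCInt.Dixmier-trace} immediately from the factorization $\Trw=\omega\circ\tau$ together with Lemma~\ref{lem:NCInt.tau}, since composing with a state preserves linearity, positivity, continuity, the trace property, and the vanishing on $(\sL_{1,\infty})_0$. You have merely spelled out the routine verifications that the paper leaves implicit.
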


\begin{definition}
 The trace $\Tr_\omega:\sL_{1,\infty} \rightarrow \C$ is called the \emph{Dixmier trace} associated with the extended limit $\lim_\omega$. 
\end{definition}

Every Dixmier trace satisfies the Ansatz for the NC integral. However, if $A\in \sL_{1,\infty}$, the value of $\Tr_\omega(A)$ may depend on the choice the extended limit. To remedy this we proceed as follows. 

\begin{definition}[Connes~\cite{Co:NCG}] 
An operator $A\in \sL_{1,\infty}$ is called \emph{measurable} if the value of  $\Tr_\omega(A)$ is independent of the choice of the extended limit. For such an operator,  its \emph{NC integral} is defined by
\begin{equation*}
 \bint A = \limw \frac{1}{\log N} \sum_{j<N} \lambda_j(A), 
\end{equation*}where $\lim_\omega$ is any extended limit.   
\end{definition}

In what follows we denote by $\sM$ the set of measurable operators. The NC integral then is a map $\bint: \sM\rightarrow \C$. 

\begin{proposition}
 The following holds.
\begin{enumerate}
\item $\sM$ is a closed subspace of $\sL_{1,\infty}$ containing $\Com(\sL_{1,\infty})$ and $(\sL_{1,\infty})_0$.  In particular, all infinitesimal operators of order~$>1$ are measurable. 

\item The NC integral $\bint: \sM\rightarrow \C$ is a positive continuous trace on $\sM$. It is annihilated by operators in $(\sL_{1,\infty})_0$, including infinitesimal operators of order~$>1$.
\end{enumerate}
\end{proposition}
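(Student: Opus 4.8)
The plan is to derive everything from the single fact that $\tau:\sL_{1,\infty}\to\ell_\infty/\ell_0$ is a positive continuous linear trace annihilated by $(\sL_{1,\infty})_0$ (Lemma~\ref{lem:NCInt.tau}), together with the characterization~\eqref{eq:NCInt.extended-limit} of convergence via extended limits. The key observation is that, by definition, $A\in\sL_{1,\infty}$ is measurable if and only if $\omega[\tau(A)]$ is the same for every state $\omega$ on the $C^*$-algebra $\ell_\infty/\ell_0$; since states separate points of $\ell_\infty/\ell_0$, this happens if and only if $\tau(A)$ is a scalar multiple of the unit of $\ell_\infty/\ell_0$, and then $\bint A$ is precisely that scalar. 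Equivalently, writing $\mathbbm{1}\in\ell_\infty/\ell_0$ for the class of the constant sequence $1$, we have $\sM=\tau^{-1}(\C\mathbbm{1})$ and $\bint A=\lambda$ where $\tau(A)=\lambda\mathbbm{1}$. This reformulation makes both statements nearly immediate.

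First I would prove part~(1). Since $\C\mathbbm{1}$ is a (one-dimensional, hence closed) linear subspace of $\ell_\infty/\ell_0$ and $\tau$ is linear and continuous, $\sM=\tau^{-1}(\C\mathbbm{1})$ is a closed linear subspace of $\sL_{1,\infty}$. For the inclusions: if $A\in\Com(\sL_{1,\infty})$ then by Lemma~\ref{lem:NCInt.tau} (or directly Corollary~\ref{cor:NCint.Com}) $\tau(A)=0=0\cdot\mathbbm{1}$, so $A\in\sM$; if $A\in(\sL_{1,\infty})_0$ then again $\tau(A)=0$ by Lemma~\ref{lem:NCInt.tau}, so $A\in\sM$, and in particular every infinitesimal operator of order $>1$ lies in $(\sL_{1,\infty})_0\subset\sM$.

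Then I would prove part~(2). Linearity of $\bint$ on $\sM$: if $A,B\in\sM$ with $\tau(A)=\alpha\mathbbm{1}$, $\tau(B)=\beta\mathbbm{1}$, then by linearity of $\tau$, $\tau(A+B)=(\alpha+\beta)\mathbbm{1}$ and $\tau(cA)=c\alpha\mathbbm{1}$, so $A+B,cA\in\sM$ and $\bint$ is linear; this also re-confirms $\sM$ is a subspace. Positivity: if $A\in\sM$ has non-negative eigenvalues then $\tau(A)\geq0$ in $\ell_\infty/\ell_0$ by Lemma~\ref{lem:NCInt.tau}, and since $\tau(A)=\bigl(\bint A\bigr)\mathbbm{1}$, a positive scalar multiple of $\mathbbm{1}$, we get $\bint A\geq0$ (apply any state, or use that the spectrum of a positive element is non-negative). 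Trace property and annihilation by $(\sL_{1,\infty})_0$: these are inherited from $\tau$, since for $A$ in $\Com(\sL_{1,\infty})$ or $(\sL_{1,\infty})_0$ we have $\tau(A)=0$, whence $\bint A=0$. Continuity: for $A\in\sM$, $|\bint A|=\|(\bint A)\mathbbm{1}\|=\|\tau(A)\|\leq C\|A\|_{1,\infty}$ by~\eqref{eq:NCInt.continuity-tau}, so $\bint$ is bounded, hence continuous, on $\sM$.

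There is essentially no hard step here: the whole proof is a clean translation of the definition of measurability into a statement about the range of $\tau$ landing in $\C\mathbbm{1}$, after which all four properties transfer formally from the already-established properties of $\tau$ in Lemma~\ref{lem:NCInt.tau}. The only point requiring a moment's care is the elementary functional-analytic fact that an element of $\ell_\infty/\ell_0$ on which all states agree must be a scalar multiple of the unit (separation of points by states), and the companion fact that a self-adjoint element which is a scalar multiple of $\mathbbm{1}$ is positive iff that scalar is non-negative; both are standard $C^*$-algebra facts already invoked in the surrounding text.
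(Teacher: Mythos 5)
Your argument is correct, and it rests on the same underlying facts as the paper's proof (everything flows from $\tau$ being a positive continuous linear trace annihilated by $\Com(\sL_{1,\infty})$ and $(\sL_{1,\infty})_0$), but the packaging differs in a genuine way. The paper writes $\sM=\bigcap_{\omega,\omega'}\{A:\ \Tr_\omega(A)=\Tr_{\omega'}(A)\}$, gets closedness as an intersection of kernels of the continuous functionals $\Tr_\omega-\Tr_{\omega'}$, and then obtains positivity, continuity, the trace property, and annihilation of $(\sL_{1,\infty})_0$ for $\bint$ simply by noting that on $\sM$ it coincides with any single Dixmier trace, whose properties are already recorded in Proposition~\ref{prop:NCInt.Dixmier-trace}. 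You instead identify $\sM=\tau^{-1}(\C\mathbbm{1})$ and $\bint A=\lambda$ when $\tau(A)=\lambda\mathbbm{1}$; this requires the extra (standard, and already invoked in the paper around~(\ref{eq:NCInt.extended-limit})) fact that states separate points of the unital $C^*$-algebra $\ell_\infty/\ell_0$, after which all properties transfer formally from Lemma~\ref{lem:NCInt.tau}, with continuity coming from $\|\mathbbm{1}\|=1$. The trade-off: your reformulation is arguably cleaner and makes the transfer of properties mechanical, but it essentially builds in the content of Theorem~\ref{thm:NCTint.Tauberian-measurable} (measurability $\Leftrightarrow$ the Tauberian property, since $\tau(A)=L\mathbbm{1}$ means exactly $(\log N)^{-1}\sum_{j<N}\lambda_j(A)\to L$), whereas the paper's proof of this proposition deliberately avoids the separation argument and defers it to that later theorem; the paper's route also makes explicit that $\bint$ is the common restriction of all Dixmier traces, which is the viewpoint used repeatedly afterwards.
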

\begin{proof}
 By definition, 
 \begin{equation*}
 \sM=\bigcap_{\omega,\omega'} \big\{A\in \sL_{1,\infty};\ \Tr_\omega(A)=\Tr_{\omega'}(A)\big\},
\end{equation*}
where $\omega$ and $\omega'$ range over all states on $\ell_\infty/\ell_0$. As the Dixmier traces $\Tr_\omega$ are continuous linear maps, it follows that $\sM$ is a closed subspace of $\sL_{1,\infty}$. 

By definition the NC integral $\bint$ agrees with any Dixmier trace $\Tr_\omega$, and so this is a continuous positive linear functional by Proposition~\ref{prop:NCInt.Dixmier-trace}. Moreover, as the union $\Com(\sL_{1,\infty}) \cup (\sL_{1,\infty})_0$ is annihilated by every Dixmier trace,  it is contained in $\sM$ and is annihilated by $\bint$. In particular, the NC integral $\bint$ is a trace on $\sM$. The proof is complete. 
 \end{proof}

\subsection{A noncommutative integral in terms of Tauberian operators} We shall now present an alternative approach to the NC integral. The approach is to work with Tauberian operators (see definition below). This approach is inspired by the characterization of measurable operators in terms of Tauberian operators in~\cite{LSZ:Book}. 

We will show in the next subsection that this approach is equivalent to the previous approach in terms of Dixmier approach.  As the 2nd approach involves spectral data only, the equivalence between the two approaches will answer Question~A. 

If $A\in \sL_{1,\infty}$, then it follows from~(\ref{eq:NCInt.additivity-A}) that if $(\lambda_j(A))_{j\geq 0}$ and $(\lambda_j'(A))_{j\geq 0}$ are two eigenvalue sequences for $A$, then 
\begin{equation*}
 (\log N)^{-1}\sum_{j<N}\lambda_j'(A)=(\log N)^{-1}\sum_{j<N}\lambda_j(A)+\op{o}(1). 
\end{equation*}
This immediately implies the following statement. 

\begin{lemma}\label{lem:NCInt.Tauberian-any}
 Given $A\in \sL_{1,\infty}$ and $L\in \C$, the following are equivalent:
 \begin{enumerate}
 \item[(i)]  $(\log N)^{-1}\sum_{j<N}\lambda_j(A)\rightarrow L$ for some eigenvalue sequence $(\lambda_j(A))_{j\geq 0}$ of $A$. 
 
\item[(ii)] $(\log N)^{-1}\sum_{j<N}\lambda_j(A)\rightarrow L$ for every eigenvalue sequence $(\lambda_j(A))_{j\geq 0}$ of $A$. 
\end{enumerate}
\end{lemma}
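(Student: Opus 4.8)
The entire content of Lemma~\ref{lem:NCInt.Tauberian-any} is already packaged in the displayed equation immediately preceding its statement, namely the consequence of~(\ref{eq:NCInt.additivity-A}) that any two eigenvalue sequences $(\lambda_j(A))_{j\geq 0}$ and $(\lambda_j'(A))_{j\geq 0}$ for a fixed $A\in \sL_{1,\infty}$ satisfy
\[
 (\log N)^{-1}\sum_{j<N}\lambda_j'(A)=(\log N)^{-1}\sum_{j<N}\lambda_j(A)+\op{o}(1).
\]
So the plan is simply to observe that the two partial-sum sequences differ by a null sequence in $\ell_\infty/\ell_0$; hence one converges to $L$ iff the other does. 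The implication (ii)$\Rightarrow$(i) is trivial, since by the convention made earlier the symbol $(\lambda_j(A))_{j\geq 0}$ always denotes some eigenvalue sequence, and at least one exists for every compact operator.

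For (i)$\Rightarrow$(ii): suppose $(\log N)^{-1}\sum_{j<N}\lambda_j(A)\to L$ for a particular eigenvalue sequence $(\lambda_j(A))_{j\geq 0}$, and let $(\lambda_j'(A))_{j\geq 0}$ be an arbitrary eigenvalue sequence for $A$. Applying Remark~\ref{rmk:NCint.two-eig-seq} (i.e.~(\ref{eq:NCInt.additivity-A}) with $B=0$) to the pair of sequences gives $\sum_{j<N}\lambda_j'(A)=\sum_{j<N}\lambda_j(A)+\op{O}(1)$, and dividing by $\log N$ turns the $\op{O}(1)$ into $\op{o}(1)$. Therefore
\[
 (\log N)^{-1}\sum_{j<N}\lambda_j'(A)=(\log N)^{-1}\sum_{j<N}\lambda_j(A)+\op{o}(1)\longrightarrow L,
\]
which is exactly (ii).

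There is no real obstacle here: the only input is Lemma~\ref{lem:NCInt.additivity} (in the degenerate case $B=0$), and the rest is the elementary fact that adding a null sequence does not change a limit. If one wished to phrase it more slickly, one could say that $\tau(A)\in\ell_\infty/\ell_0$, as constructed just above, is independent of the choice of eigenvalue sequence, so that the condition "$(\log N)^{-1}\sum_{j<N}\lambda_j(A)\to L$" is a property of $\tau(A)$ alone — namely that $\tau(A)$ equals the class of the constant sequence $L$ — and hence is insensitive to which eigenvalue sequence was used to represent it.
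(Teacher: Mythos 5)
Your proposal is correct and follows exactly the paper's route: the lemma is stated there as an immediate consequence of the displayed relation $(\log N)^{-1}\sum_{j<N}\lambda_j'(A)=(\log N)^{-1}\sum_{j<N}\lambda_j(A)+\op{o}(1)$, which comes from Remark~\ref{rmk:NCint.two-eig-seq} (i.e.\ Lemma~\ref{lem:NCInt.additivity} with $B=0$) upon dividing by $\log N$, precisely as you argue. Nothing is missing.
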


\begin{definition}[see, e.g., \cite{LSZ:Book}] 
Any operator $A\in \sL_{1,\infty}$ that satisfies the conditions of Lemma~\ref{lem:NCInt.Tauberian-any} is called a \emph{Tauberian operator}. 
\end{definition}

In what follows, we  denote by $\sT$ the class of Tauberian operators in $\sL_{1,\infty}$. If $A \in \sT$, we set
\begin{equation*}
 \bint'A:= \lim_{N\rightarrow \infty}  \frac{1}{ \log N}\sum_{j<N}\lambda_j(A), 
\end{equation*}
where $(\lambda_j(A))_{j\geq 0}$ is any eigenvalue sequence for $A$. Thanks to Lemma~\ref{lem:NCInt.Tauberian-any} the above limit exists for any eigenvalue sequence and its value is independent of the choice of that sequence. 

The following result shows that the map $\bint':\sT\rightarrow \C$ has all the properties we are seeking for the NC integral. 

\begin{proposition}\label{prop:NCInt.properties-Lambda} 
 The following holds. 
\begin{enumerate}
 \item $\sT$ is a subspace of $\sL_{1,\infty}$ containing $\Com(\sL_{1,\infty})$ and $(\sL_{1,\infty})_0$. 
 
 \item The map $\bint': \sT\rightarrow \C$ is a continuous positive linear trace on $\sT$. It is  annihilated by  operators in $(\sL_{1,\infty})_0$, including infinitesimals of order~$>1$. 
\end{enumerate}
\end{proposition}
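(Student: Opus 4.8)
The plan is to derive the whole statement from the three ingredients already in hand: the asymptotic additivity of eigenvalue sums (Lemma~\ref{lem:NCInt.additivity}), its corollary on the commutator space (Corollary~\ref{cor:NCint.Com}), and the uniform bound \eqref{eq:NCInt.Weyl-norm-ineq} coming from the Ky Fan inequalities. In fact the cleanest route is to observe that $\sT$ is exactly the preimage under the map $\tau$ of Lemma~\ref{lem:NCInt.tau} of the constant-sequence classes in $\ell_\infty/\ell_0$, with $\bint'A$ the value of that constant; then all of (1)--(2) are immediate from Lemma~\ref{lem:NCInt.tau}. I would nonetheless also record the short self-contained argument below, since it does not require the $\ell_\infty/\ell_0$ formalism.

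First I would prove (1). Homogeneity is immediate: if $A\in\sT$ and $c\in\C$, then $(c\lambda_j(A))_{j\geq 0}$ is an eigenvalue sequence for $cA$, so $cA\in\sT$ and $\bint'(cA)=c\bint'A$. For additivity, given $A,B\in\sT$, Lemma~\ref{lem:NCInt.additivity} gives $(\log N)^{-1}\sum_{j<N}\lambda_j(A+B)=(\log N)^{-1}\sum_{j<N}\lambda_j(A)+(\log N)^{-1}\sum_{j<N}\lambda_j(B)+\op{o}(1)$, and the right-hand side converges to $\bint'A+\bint'B$; hence $A+B\in\sT$ with $\bint'(A+B)=\bint'A+\bint'B$. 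Thus $\sT$ is a subspace and $\bint'$ is linear on it. The inclusion $\Com(\sL_{1,\infty})\subseteq\sT$ is immediate from Corollary~\ref{cor:NCint.Com}: for $A\in\Com(\sL_{1,\infty})$ the partial sums $\sum_{j<N}\lambda_j(A)$ are $\op{O}(1)$, so the normalized sums tend to $0$; in particular $\bint'$ vanishes on $\Com(\sL_{1,\infty})$, which together with linearity makes $\bint'$ a trace. For $(\sL_{1,\infty})_0\subseteq\sT$ I would use that $A\in(\sL_{1,\infty})_0$ means $\mu_j(A)=\op{o}(j^{-1})$: splitting off finitely many terms and comparing $\sum_{J\leq j<N}\mu_j(A)$ with $\varepsilon\sum_{J\leq j<N}j^{-1}$ gives $\sum_{j<N}\mu_j(A)=\op{o}(\log N)$, whence by the Ky Fan inequalities \eqref{eq:NC-Integral.Weyl-Ineq} also $\big|\sum_{j<N}\lambda_j(A)\big|=\op{o}(\log N)$, so $A\in\sT$ with $\bint'A=0$.

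Then for (2): positivity is clear, since if $A\geq 0$ its eigenvalue sequence is non-negative, so the partial sums and their limit $\bint'A$ are $\geq 0$; continuity follows by letting $N\to\infty$ in \eqref{eq:NCInt.Weyl-norm-ineq}, giving $|\bint'A|\leq C\|A\|_{1,\infty}$; and the trace property together with the annihilation of $(\sL_{1,\infty})_0$ (hence of all infinitesimals of order $>1$, which lie in $(\sL_{1,\infty})_0$) were already obtained in the course of (1). I do not expect any serious obstacle here: the proposition is essentially a repackaging of Lemma~\ref{lem:NCInt.tau}. The one point that requires an actual line of work is the elementary estimate $\sum_{j<N}\mu_j(A)=\op{o}(\log N)$ for $A\in(\sL_{1,\infty})_0$, which is a Stolz--Ces\`aro-type comparison of the harmonic-type tail with $\log N$; everything else is bookkeeping with the earlier lemmas.
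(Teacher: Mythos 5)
Your proposal is correct and takes essentially the same route as the paper's proof: homogeneity from the rescaled eigenvalue sequence of $cA$, additivity from Lemma~\ref{lem:NCInt.additivity}, the inclusion of $\Com(\sL_{1,\infty})$ from Corollary~\ref{cor:NCint.Com}, and the inclusion of $(\sL_{1,\infty})_0$ from $\mu_j(A)=\op{o}(j^{-1})$ together with the Ky Fan inequalities \eqref{eq:NC-Integral.Weyl-Ineq}. The only extras are the explicit continuity bound via \eqref{eq:NCInt.Weyl-norm-ineq} (which the paper's proof leaves implicit) and the optional repackaging through $\tau$ of Lemma~\ref{lem:NCInt.tau}, both of which are fine.
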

\begin{proof}
It is immediate that if an operator $A\in \sT$ is positive, then $\bint'A\geq 0$.  Moreover, if $A\in \sT$ and $c\in \C$, then $(c\lambda_j(A))_{j\geq 0}$ is an eigenvalue sequence of $cA$, and hence $cA\in \sT$ with $\bint'(cA)=c\bint'(A)$. If $A,B\in \sT$, then~(\ref{eq:NCInt.additivity}) implies  that
\begin{equation*}
 \frac{1}{ \log N}\sum_{j<N}\lambda_j(A+B) = \frac{1}{ \log N}\sum_{j<N}\lambda_j(A) + \frac{1}{ \log N}\sum_{j<N}\lambda_j(B)+\op{o}(1). 
\end{equation*}
Thus, 
\begin{align*}
 \lim_{N\rightarrow \infty}  \frac{1}{ \log N}\sum_{j<N}\lambda_j(A+B) &=  \lim_{N\rightarrow \infty} \frac{1}{ \log N}\sum_{j<N}\lambda_j(A) +  \lim_{N\rightarrow \infty} \frac{1}{ \log N}\sum_{j<N}\lambda_j(B)\\
 & =\bint'A+\bint' B.  
\end{align*}
That is, $A+B\in \sT$ and $\bint'(A+B)=\bint'A+\bint'B$. All this shows that $\sT$ is a subspace of $\sL_{1,\infty}$ and $\bint': \sT\rightarrow \C$ is a positive linear map. 

If $A\in (\sL_{1,\infty})_0$, then $\mu_j(A)=\op{o}(j^{-1})$, and so $\sum_{j<N} \mu_j(A)=\op{o}(\log N)$. Combining this with the Ky Fan's inequality~(\ref{eq:NC-Integral.Weyl-Ineq}) shows that 
$\sum_{j<N} \lambda_j(A)=\op{o}(\log N)$, i.e., $A\in \sT$ and $\bint'A=0$. Likewise, if  $A \in \Com(\sL_{1,\infty})$, then Corollary~\ref{cor:NCint.Com} implies that $\sum_{j<N} \lambda_j(A)$ is $\op{0}(1)$, and hence is $\op{o}(\log N)$. Thus, in this case too, $A\in \sT$ and $\bint'A=0$. In particular, this shows that $\bint'$ is a trace on $\sT$. The proof is complete. 
\end{proof}

\subsection{Equivalence between the two approaches. Spectral invariance} 
We shall now explain that the two approaches to the NC integral coincide. Namely, we have the following result. 

\begin{theorem}\label{thm:NCTint.Tauberian-measurable}
 An operator $A\in \sL_{1,\infty}$ is measurable if and only if it is Tauberian. Moreover, in this case we have
 \begin{equation}
 \bint A =  \lim_{N\rightarrow \infty} \frac{1}{ \log N}\sum_{j<N}\lambda_j(A).
 \label{eq:NCInt.trace-formula}
\end{equation}
\end{theorem}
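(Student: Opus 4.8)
The plan is to transfer everything to the commutative $C^*$-algebra $\ell_\infty/\ell_0$ through the map $\tau$ of Lemma~\ref{lem:NCInt.tau}. First I recall that for any extended limit $\lim_\omega$ — equivalently, for the corresponding state $\omega$ on $\ell_\infty/\ell_0$ — one has $\Tr_\omega(A)=\omega(\tau(A))$, so that $A$ is measurable exactly when the scalar $\omega(\tau(A))$ is independent of the state $\omega$. Next I note that $A$ is Tauberian exactly when the bounded sequence $\big\{(\log N)^{-1}\sum_{j<N}\lambda_j(A)\big\}_{N\geq1}$ converges; by the characterization~(\ref{eq:NCInt.extended-limit}), this is equivalent to $\tau(A)$ being a scalar multiple of the unit $1$ of $\ell_\infty/\ell_0$, and the scalar is then precisely the limit. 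Thus the theorem will follow from one elementary fact about states on a unital $C^*$-algebra.

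That fact is: if $\sA$ is a unital $C^*$-algebra, $a\in\sA$ and $c\in\C$, then $\omega(a)=c$ for every state $\omega$ on $\sA$ if and only if $a=c1$. I will prove the nontrivial implication by splitting $a=b_1+ib_2$ into its self-adjoint real and imaginary parts; since $\omega(a^*)=\overline{\omega(a)}$ for every state, the hypothesis gives $\omega(b_1)=\Re c$ and $\omega(b_2)=\Im c$ for every state, and then the standard equality $\sup\{|\omega(b)|:\omega \text{ a state}\}=\|b\|$ for self-adjoint $b$, applied to $b_1-(\Re c)1$ and $b_2-(\Im c)1$, forces $a=c1$.

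Then I will assemble the two directions. If $A$ is Tauberian with $\bint'A=L$, then $\tau(A)=L\cdot1$, hence $\Tr_\omega(A)=\omega(\tau(A))=L$ for every extended limit, so $A$ is measurable and $\bint A=L$. Conversely, if $A$ is measurable, let $c$ denote the common value of $\Tr_\omega(A)$; then $\omega(\tau(A))=c$ for every state on $\ell_\infty/\ell_0$, so the $C^*$-fact gives $\tau(A)=c\cdot1$, which by~(\ref{eq:NCInt.extended-limit}) means $(\log N)^{-1}\sum_{j<N}\lambda_j(A)\to c$; thus $A$ is Tauberian with $\bint'A=c$. In either case $\bint A=\lim_{N\to\infty}(\log N)^{-1}\sum_{j<N}\lambda_j(A)$, which is~(\ref{eq:NCInt.trace-formula}).

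I do not expect a genuine obstacle here: the argument is essentially a dictionary translation once Lemma~\ref{lem:NCInt.tau} and the extended-limit/state correspondence are available. The only points that need care are handling the \emph{non-self-adjoint} element $\tau(A)$ in the $C^*$-fact (remembering to decompose into real and imaginary parts), and bookkeeping the constant so that the scalar in $\tau(A)=c1$, the common value of the Dixmier traces, and the Tauberian limit all coincide — which is exactly what yields the trace formula~(\ref{eq:NCInt.trace-formula}) with the correct normalization.
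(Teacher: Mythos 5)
Your proof is correct and takes essentially the same route as the paper: both reduce the theorem to the equivalence~(\ref{eq:NCInt.extended-limit}) between convergence of the bounded sequence $\{(\log N)^{-1}\sum_{j<N}\lambda_j(A)\}_{N\geq 1}$ and agreement of all extended limits on it, using the identity $\Tr_\omega(A)=\omega(\tau(A))$ and the extended-limit/state correspondence. The only difference is that you spell out the separating-states ($C^*$-algebraic) argument behind the nontrivial direction of~(\ref{eq:NCInt.extended-limit}), which the paper had already recorded as a consequence of Hahn--Banach.
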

\begin{proof}
 This is a direct consequence of~(\ref{eq:NCInt.extended-limit}), since it gives
\begin{equation*}
 \bigg( \limw  \frac{1}{ \log N}\sum_{j<N}\lambda_j(A) =L \quad \forall \omega \bigg)
  \Longleftrightarrow   \lim_{N\rightarrow \infty} \frac{1}{ \log N}\sum_{j<N}\lambda_j(A) =L.  
\end{equation*}
 The l.h.s.\ exactly means that $A\in \sM$ and $\bint A=L$. The r.h.s.\ exactly means that $A\in \sT$ and $\bint' A=L$. Hence the result. 
\end{proof}

\begin{remark}
 The characterization of measurable operators in terms of the Tauberian property is the contents of~\cite[Theorem~9.7.5]{LSZ:Book}. The proof given above is somewhat simpler. The trace formula~(\ref{eq:NCInt.trace-formula}) is not established in~\cite{LSZ:Book}.
\end{remark}

Theorem~\ref{thm:NCTint.Tauberian-measurable} characterizes measurable operators and shows how to compute NC integrals purely in terms of spectral data. In particular, the computation of the NC integral of some concrete operator only requires the knowledge of its spectrum. This answers Question~A in the introduction. Incidentally, this shows that $(\sM,\bint)$ depends on the locally convex topology of $\sH$ in a somewhat loose sense. In particular, it does not depend on the choice of the inner product. 

We mention a few consequences of Theorem~\ref{thm:NCTint.Tauberian-measurable}.

\begin{proposition}\label{prop:NCInt.Measurable-Re-Im} 
 Let $A\in \sL_{1,\infty}$. Then $A$ is measurable if and only if its real part $\Re A$ and its imaginary part $\Im A$ are both measurable. Moreover, in this case we have
\begin{equation*}
 \Re\bigg( \bint A \bigg) =\bint \Re A, \qquad  \Im \bigg( \bint A \bigg)  =\bint \Im A. 
\end{equation*}
\end{proposition}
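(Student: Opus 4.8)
The plan is to derive Proposition~\ref{prop:NCInt.Measurable-Re-Im} directly from the spectral characterization of measurability provided by Theorem~\ref{thm:NCTint.Tauberian-measurable} together with the elementary identities collected in Remark~\ref{rmk:NCInt.real-imaginary-parts}. The point is that measurability has been reduced to the \emph{existence} of the scalar limit $(\log N)^{-1}\sum_{j<N}\lambda_j(A)$, and for scalar sequences the existence of a limit is equivalent to the existence of the limits of the real and imaginary parts; Remark~\ref{rmk:NCInt.real-imaginary-parts} is exactly what lets us identify those real and imaginary parts with the eigenvalue sums of $\Re A$ and $\Im A$.

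In detail, I would argue as follows. Fix $A\in\sL_{1,\infty}$ and set $S_N:=(\log N)^{-1}\sum_{j<N}\lambda_j(A)$, $S_N':=(\log N)^{-1}\sum_{j<N}\lambda_j(\Re A)$, $S_N'':=(\log N)^{-1}\sum_{j<N}\lambda_j(\Im A)$; note $\Re A$ and $\Im A$ lie in $\sL_{1,\infty}$ since it is a $*$-ideal. By Remark~\ref{rmk:NCInt.real-imaginary-parts}, $\Re S_N = S_N' + \op{o}(1)$ and $\Im S_N = S_N'' + \op{o}(1)$. Now invoke Theorem~\ref{thm:NCTint.Tauberian-measurable}: $A$ is measurable iff $\lim_N S_N$ exists, $\Re A$ is measurable iff $\lim_N S_N'$ exists, and $\Im A$ is measurable iff $\lim_N S_N''$ exists. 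Since a complex sequence converges iff both its real and imaginary parts converge, and since the $\op{o}(1)$ corrections do not affect convergence, $\lim_N S_N$ exists iff both $\lim_N \Re S_N = \lim_N S_N'$ and $\lim_N \Im S_N = \lim_N S_N''$ exist. This gives the equivalence "$A$ measurable $\iff$ $\Re A$ and $\Im A$ measurable." For the identities, when these limits exist we have $\bint A = \lim_N S_N$, $\bint\Re A = \lim_N S_N'=\lim_N \Re S_N = \Re(\lim_N S_N) = \Re(\bint A)$, and likewise $\bint\Im A = \Im(\bint A)$, again using the trace formula~(\ref{eq:NCInt.trace-formula}) of Theorem~\ref{thm:NCTint.Tauberian-measurable}.

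There is essentially no obstacle here; the statement is a soft corollary once Theorem~\ref{thm:NCTint.Tauberian-measurable} and Remark~\ref{rmk:NCInt.real-imaginary-parts} are in hand. The only point requiring a moment's care is the bookkeeping of eigenvalue sequences in Remark~\ref{rmk:NCInt.real-imaginary-parts}: one must use that the error term there is genuinely $\op{O}(1)$ (hence $\op{o}(\log N)$ after dividing by $\log N$), which is what decouples the convergence of $S_N$ from that of $S_N'$ and $S_N''$. One could alternatively phrase the whole argument in terms of Tauberian operators without ever mentioning Dixmier traces, which is stylistically consistent with the spectral-theoretic emphasis of this section, but the content is identical.
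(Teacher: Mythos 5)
Your argument is correct and is essentially the paper's own proof: both rely on Remark~\ref{rmk:NCInt.real-imaginary-parts} to identify $\Re$ and $\Im$ of the eigenvalue sums of $A$ with those of $\Re A$ and $\Im A$ up to $\op{O}(1)$, then use the fact that a complex sequence converges iff its real and imaginary parts do, and conclude via the Tauberian characterization and trace formula of Theorem~\ref{thm:NCTint.Tauberian-measurable}. No gaps; the bookkeeping point you flag (the error being $\op{o}(\log N)$ after division) is exactly the step the paper also uses.
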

\begin{proof}
 It follows from Remark~\ref{rmk:NCInt.real-imaginary-parts} that 
\begin{gather*}
  \frac{1}{\log N}\Re\big(\sum_{j<N} \lambda_j(A) \big)=  \frac{1}{\log N}\sum_{j<N} \lambda_j(\Re A) +\op{o}(1),\\
  \frac{1}{\log N}\Im\big(\sum_{j<N} \lambda_j(A) \big)=  \frac{1}{\log N}\sum_{j<N} \lambda_j(\Im A) +\op{o}(1). 
\end{gather*}
Thus $\log N^{-1}\sum_{j<N} \lambda_j(A)\rightarrow L$ as $N\rightarrow \infty$ if and only if 
\begin{equation*}
  \lim_{N\rightarrow \infty}\frac{1}{\log N}\sum_{j<N} \lambda_j(\Re A)=\Re L \quad \text{and} \quad    \lim_{N\rightarrow \infty}\frac{1}{\log N}\sum_{j<N} \lambda_j(\Re A)=\Im L.
\end{equation*}
Combining this with Theorem~\ref{thm:NCTint.Tauberian-measurable} gives the result. 
\end{proof}

\begin{proposition}\label{prop:NCInt.Measurable-selfadj}
 Let $A=A^*\in \sL_{1,\infty}$. Then $A$ is measurable if and only if 
 \begin{equation*}
  \lim_{N\rightarrow \infty} \frac1{\log N}\sum_{j<N} (\lambda_j^+(A)-\lambda_j^-(A)) \quad \text{exists}. 
\end{equation*}
 Moreover, in this case we have
\begin{equation*}
 \bint A =  \lim_{N\rightarrow \infty} \frac1{\log N}\sum_{j<N} (\lambda_j^+(A)-\lambda_j^-(A)). 
\end{equation*}
\end{proposition}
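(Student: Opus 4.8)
The plan is to deduce Proposition~\ref{prop:NCInt.Measurable-selfadj} directly from Theorem~\ref{thm:NCTint.Tauberian-measurable} together with Remark~\ref{rmk:NCInt.self-eig}, in exactly the same way that Proposition~\ref{prop:NCInt.Measurable-Re-Im} was deduced from Theorem~\ref{thm:NCTint.Tauberian-measurable} and Remark~\ref{rmk:NCInt.real-imaginary-parts}. The key observation is that for $A=A^*\in\sL_{1,\infty}$, Remark~\ref{rmk:NCInt.self-eig} gives
\begin{equation*}
 \sum_{j<N}\lambda_j(A) = \sum_{j<N}\big(\lambda_j^+(A)-\lambda_j^-(A)\big) + \op{O}(1),
\end{equation*}
since $A=A^+-A^-$ and the additivity Lemma~\ref{lem:NCInt.additivity} applies. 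Dividing by $\log N$ turns the $\op{O}(1)$ error into $\op{o}(1)$, so
\begin{equation*}
 \frac1{\log N}\sum_{j<N}\lambda_j(A) = \frac1{\log N}\sum_{j<N}\big(\lambda_j^+(A)-\lambda_j^-(A)\big) + \op{o}(1).
\end{equation*}

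From this, the first sequence converges to some $L\in\C$ as $N\to\infty$ if and only if the second one does, and in that case both limits equal $L$. By Theorem~\ref{thm:NCTint.Tauberian-measurable}, convergence of the left-hand side is exactly the statement that $A$ is measurable (equivalently Tauberian), with $\bint A = L$. This immediately yields both the equivalence and the formula in the proposition.

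I would write the proof as essentially two lines: cite Remark~\ref{rmk:NCInt.self-eig}, divide by $\log N$, and invoke Theorem~\ref{thm:NCTint.Tauberian-measurable}. There is no real obstacle here — the content has already been packaged into the additivity lemma and its self-adjoint corollary. The only minor care needed is to note that $(\pm\lambda_j^\pm(A))_{j\geq 0}$ really is (a shuffle of) an eigenvalue sequence for $A=A^*$, which is precisely what Remark~\ref{rmk:NCInt.self-eig} records; alternatively one observes that $A^+,A^-\in\sL_{1,\infty}$ are positive so their eigenvalue sequences are their singular value sequences, and applies Lemma~\ref{lem:NCInt.additivity} to the pair $(A^+,-A^-)$. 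Either way the remainder is $\op{O}(1)$, which is absorbed after normalizing by $\log N$, and the proof is complete.
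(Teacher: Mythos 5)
Your proposal is correct and coincides with the paper's own argument: the paper likewise invokes Remark~\ref{rmk:NCInt.self-eig} to write $\sum_{j<N}\lambda_j(A)=\sum_{j<N}(\lambda_j^+(A)-\lambda_j^-(A))+\op{O}(1)$, divides by $\log N$, and concludes via the Tauberian characterization of measurability in Theorem~\ref{thm:NCTint.Tauberian-measurable}. No further comment is needed.
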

\begin{proof}
 It follows from Remark~\ref{rmk:NCInt.self-eig} that 
 \begin{equation*}
  \frac1{\log N}\sum_{j<N} \lambda_j(A)=  \frac1{\log N}\sum_{j<N} (\lambda_j^+(A)-\lambda_j^-(A)) +\op{o}(1). 
\end{equation*}
This gives the result. 
\end{proof}

Let $\sH'$ be another Hilbert space.  
Theorem~\ref{thm:NCTint.Tauberian-measurable} implies the following spectral invariance result. 

\begin{proposition}\label{prop:NCInt.spectral-invariance}
 Let $A\in \sL_{1,\infty}(\sH)$ and $A'\in \sL_{1,\infty}(\sH')$ have the same non-zero eigenvalues with same multiplicities. Then $A$ is measurable if and only if $A'$ is measurable. Moreover, in this case $\bint A=\bint A'$. 
\end{proposition}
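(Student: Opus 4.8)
The plan is to reduce to the Tauberian characterization furnished by Theorem~\ref{thm:NCTint.Tauberian-measurable} and then exploit the hypothesis to feed $A$ and $A'$ \emph{the same} eigenvalue sequence. Let $M$ be the common multiset of non-zero eigenvalues of $A$ and $A'$, each repeated according to its algebraic multiplicity. Since the eigenvalues of a compact operator tend to $0$ and each non-zero eigenvalue has finite algebraic multiplicity, for every $\rho>0$ only finitely many elements of $M$ (counted with multiplicity) have modulus $\geq\rho$; hence $M$ may be enumerated as a sequence $\nu_0,\nu_1,\ldots$ with $|\nu_0|\geq|\nu_1|\geq\cdots$, padded with zeros if $M$ is finite. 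First I would check that $(\nu_j)_{j\geq 0}$ is simultaneously an eigenvalue sequence for $A$ and for $A'$: by construction it lists the non-zero eigenvalues of $A$ (resp.\ $A'$) with the correct multiplicities in non-increasing order of modulus, and when $M$ is finite both operators have finite rank, so completing the sequence by the genuine eigenvalue $0$ is legitimate.

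With this common choice one has $\sum_{j<N}\lambda_j(A)=\sum_{j<N}\nu_j=\sum_{j<N}\lambda_j(A')$ for every $N\geq 1$, so
\[
 \lim_{N\to\infty}\frac1{\log N}\sum_{j<N}\lambda_j(A)=L
 \quad\Longleftrightarrow\quad
 \lim_{N\to\infty}\frac1{\log N}\sum_{j<N}\lambda_j(A')=L .
\]
By Lemma~\ref{lem:NCInt.Tauberian-any} the existence of these limits does not depend on the choice of eigenvalue sequence, so $A\in\sT$ if and only if $A'\in\sT$, and $\bint'A=\bint'A'$ whenever they are Tauberian. Theorem~\ref{thm:NCTint.Tauberian-measurable} then translates this back into the assertion: $A\in\sM$ if and only if $A'\in\sM$, and in that case $\bint A=\bint'A=\bint'A'=\bint A'$.

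The only point requiring care is the bookkeeping around the eigenvalue $0$ together with the non-uniqueness of eigenvalue sequences coming from the freedom to permute eigenvalues of equal modulus; one must make sure the constructed sequence $(\nu_j)$ is an admissible eigenvalue sequence for \emph{both} operators (equivalently, recall the convention that an eigenvalue sequence is any sequence whose non-zero terms are exactly the non-zero eigenvalues, listed with multiplicity in non-increasing order of modulus, with zeros appended only when the operator has finite rank). Once this is granted the partial sums are \emph{equal}, not merely equal up to $\op{O}(1)$, so the argument uses nothing beyond Lemma~\ref{lem:NCInt.Tauberian-any} and Theorem~\ref{thm:NCTint.Tauberian-measurable}; in particular the asymptotic additivity of Lemma~\ref{lem:NCInt.additivity} is not needed at this step. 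A slightly heavier alternative would be to pick arbitrary eigenvalue sequences for $A$ and $A'$ and bound the difference of their partial sums by $\op{O}(1)$ via a rearrangement estimate, but the common-sequence argument is cleaner and self-contained.
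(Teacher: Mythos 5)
Your proposal is correct and is essentially the paper's own argument: the paper states this proposition with no separate proof, presenting it as an immediate consequence of Theorem~\ref{thm:NCTint.Tauberian-measurable}, and your route—observing that $A$ and $A'$ admit a common eigenvalue sequence, so the Tauberian property and the value of the limit are literally the same for both, then translating back via the measurable$\Leftrightarrow$Tauberian equivalence—is exactly the intended reasoning. The only inaccuracy is your side remark that finiteness of the multiset of non-zero eigenvalues forces both operators to be of finite rank (a quasinilpotent compact operator, e.g.\ a Volterra-type operator, has no non-zero eigenvalues but infinite rank); this is harmless, since by the eigenvalue-sequence convention the zero-padding is legitimate in that case anyway, so the common-sequence argument goes through unchanged.
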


Suppose now that $\iota:\sH'\rightarrow \sH$ is continuous linear embedding, i.e., it is a linear map which is one-to-one and has closed range. For instance, any isometric linear map is such an embedding. Denote by $\sH_1$ the range of $\iota$. By assumption this is a closed subspace of $\sH$ and $\iota$ gives rise to a continuous linear isomorphism $\iota:\sH\rightarrow \sH_1$ with inverse $\iota^{-1}:\sH_1\rightarrow \sH'$. As explained in Appendix~\ref{sec:Appendix} we have a pushforward map $\iota_*: \sL(\sH')\rightarrow \sL(\sH)$ given by 
\begin{equation}
 \iota_*A=\iota \circ A\circ \iota^{-1} \circ \pi, \qquad A \in \sL(\sH'), 
 \label{eq:NCInt.iota*A}
\end{equation}
where $\pi:\sH \rightarrow \sH$ is the orthogonal projection onto $\sH_1$. In particular, if $\iota$ is invertible, then $\iota_*A=\iota A\iota^{-1}$.  We also know from Proposition~\ref{prop:App.embed-Schatten} that $\iota_*$ induces a continuous linear embedding,
\begin{equation*}
 \iota_*: \sL_{1,\infty}(\sH') \longrightarrow \sL_{1,\infty}(\sH).  
\end{equation*}
Moreover, by Proposition~\ref{prop:App.eigenvalues} if $A\in \sL_{1,\infty}(\sH')$, then $A$ and $\iota_*A$ have the same non-zero eigenvalues with same multiplicities. Combining this with Proposition~\ref{prop:NCInt.spectral-invariance} we then arrive at the following statement. 

\begin{corollary}\label{cor:NCInt.embedding} 
Let $A\in \sL_{1,\infty}(\sH')$. Then $\iota_*A$ is measurable if and only $A$ is measurable. Moreover, in this case we have
\begin{equation*}
 \bint \iota_*A = \bint A. 
\end{equation*}
\end{corollary}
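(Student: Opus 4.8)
The plan is to deduce Corollary~\ref{cor:NCInt.embedding} directly from the spectral‐invariance statement of Proposition~\ref{prop:NCInt.spectral-invariance}, using only the two imported facts about the pushforward map $\iota_*$ that the excerpt has already made available. Concretely, I would proceed in three short steps.

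First I would check that $\iota_*A$ actually lies in $\sL_{1,\infty}(\sH)$, so that the notion of measurability applies to it: this is immediate from Proposition~\ref{prop:App.embed-Schatten}, which says $\iota_*$ restricts to a continuous linear embedding $\sL_{1,\infty}(\sH')\to\sL_{1,\infty}(\sH)$. Second, I would invoke Proposition~\ref{prop:App.eigenvalues} to record that $A$ and $\iota_*A$ have exactly the same non-zero eigenvalues, with the same algebraic multiplicities. The point of this step is purely bookkeeping: the operator $\iota_*A=\iota\circ A\circ\iota^{-1}\circ\pi$ acts as (a conjugate of) $A$ on the closed subspace $\sH_1=\ran\iota$ and as $0$ on $\sH_1^\perp$, so only the zero eigenvalue's multiplicity changes, and that does not affect the spectral data relevant to the NC integral. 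Third, I would apply Proposition~\ref{prop:NCInt.spectral-invariance} with $(\sH,A)$ replaced by $(\sH,\iota_*A)$ and $(\sH',A')$ by $(\sH',A)$: since the two operators have the same non-zero eigenvalues with the same multiplicities, one is measurable iff the other is, and in that case $\bint\iota_*A=\bint A$. That is exactly the claimed statement.

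There is essentially no obstacle here at the level of this corollary — it is a two-line consequence once the appendix results are granted. The only thing I would be slightly careful about is making sure the hypothesis of Proposition~\ref{prop:NCInt.spectral-invariance} is literally "same non-zero eigenvalues with same multiplicities," which it is, so the zero-eigenvalue discrepancy introduced by $\pi$ is harmless; I would state this explicitly in one sentence to reassure the reader. (The genuinely substantive content has been pushed into Proposition~\ref{prop:NCInt.spectral-invariance}, whose proof rests on Theorem~\ref{thm:NCTint.Tauberian-measurable}, and into the appendix lemmas on Hilbert space embeddings.) Thus the write-up is short: invoke Proposition~\ref{prop:App.embed-Schatten}, then Proposition~\ref{prop:App.eigenvalues}, then Proposition~\ref{prop:NCInt.spectral-invariance}, and conclude.
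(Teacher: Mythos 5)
Your proposal is correct and coincides with the paper's own argument: the corollary is obtained by combining Proposition~\ref{prop:App.embed-Schatten} (so that $\iota_*A\in\sL_{1,\infty}(\sH)$), Proposition~\ref{prop:App.eigenvalues} (same non-zero eigenvalues with same multiplicities), and the spectral-invariance statement of Proposition~\ref{prop:NCInt.spectral-invariance}. Your remark that the discrepancy at the zero eigenvalue is harmless is a sensible clarification, but otherwise there is nothing to add.
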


Denote by $\sM(\sH)$ (resp., $\sM(\sH')$) the space of measurable operators on $\sH$ (resp., $\sH'$).
Specializing Corollary~\ref{cor:NCInt.embedding} to the case where $\iota$ is an isomorphism yields the following invariance result. 

\begin{corollary}\label{cor:NCInt.embedding-iso}
 Assume  $\iota:\sH\rightarrow \sH'$ is a continuous linear isomorphism. Then $\iota\sM(\sH)\iota^{-1}=\sM(\sH')$, and we have
\begin{equation*}
 \bint \iota A \iota^{-1} =\bint A \qquad \forall A\in \sM(\sH). 
\end{equation*}
\end{corollary}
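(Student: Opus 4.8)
The plan is to read this off from Corollary~\ref{cor:NCInt.embedding} once we notice that an isomorphism is a \emph{surjective} continuous linear embedding, so that the orthogonal projection appearing in~(\ref{eq:NCInt.iota*A}) becomes the identity. Suppose $\iota:\sH\rightarrow\sH'$ is a continuous linear isomorphism. Then $\iota$ is in particular a continuous linear embedding whose range is all of $\sH'$; hence $\sH_1=\sH'$, the orthogonal projection $\pi$ onto this range is the identity of $\sH'$, and the pushforward map of Appendix~\ref{sec:Appendix} reduces to $\iota_*A=\iota A\iota^{-1}$ for every $A\in\sL(\sH)$, exactly as already observed in the text following~(\ref{eq:NCInt.iota*A}). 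In the language of Corollary~\ref{cor:NCInt.embedding} we are simply interchanging the roles of the two Hilbert spaces, with $\iota$ playing the part of the embedding there.

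First I would apply Corollary~\ref{cor:NCInt.embedding} to $\iota$: for every $A\in\sL_{1,\infty}(\sH)$ we get $\iota A\iota^{-1}\in\sL_{1,\infty}(\sH')$, and $\iota A\iota^{-1}$ is measurable if and only if $A$ is, in which case $\bint\iota A\iota^{-1}=\bint A$. This yields the inclusion $\iota\sM(\sH)\iota^{-1}\subseteq\sM(\sH')$ together with the asserted equality of NC integrals on $\sM(\sH)$. For the reverse inclusion I would run the same argument with $\iota^{-1}:\sH'\rightarrow\sH$, which is again a continuous linear isomorphism: given $B\in\sM(\sH')$, set $A=\iota^{-1}B\iota=(\iota^{-1})_*B\in\sL_{1,\infty}(\sH)$; since $B=\iota A\iota^{-1}$ is measurable, so is $A$, whence $B\in\iota\sM(\sH)\iota^{-1}$. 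Thus $\sM(\sH')\subseteq\iota\sM(\sH)\iota^{-1}$, and the two inclusions give $\iota\sM(\sH)\iota^{-1}=\sM(\sH')$.

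I do not expect any real obstacle here: the statement is a bookkeeping specialization of Corollary~\ref{cor:NCInt.embedding}. The only points requiring care are conventional ones — verifying that in the isomorphism case $\pi=\id$ so that $\iota_*A$ genuinely coincides with the conjugate $\iota A\iota^{-1}$, and keeping track of which Hilbert space plays the primed and which the unprimed role when invoking the previous corollary. Everything substantive — that conjugation by $\iota^{\pm1}$ maps $\sL_{1,\infty}$ into $\sL_{1,\infty}$ and preserves the non-zero eigenvalues with their multiplicities — is already packaged inside Corollary~\ref{cor:NCInt.embedding} via Propositions~\ref{prop:App.embed-Schatten} and~\ref{prop:App.eigenvalues}.
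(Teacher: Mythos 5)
Your proposal is correct and follows exactly the route the paper intends: the paper offers no separate argument beyond ``specializing Corollary~\ref{cor:NCInt.embedding} to the isomorphism case,'' where $\pi=\id$ and $\iota_*A=\iota A\iota^{-1}$, and the ``if and only if'' in that corollary (applied to $\iota$ and, for the reverse inclusion, to $\iota^{-1}$, or equivalently read backwards) gives both inclusions together with the equality of the NC integrals. Your bookkeeping of which space is primed and the explicit treatment of the reverse inclusion are exactly the details the paper leaves implicit.
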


\section{Connes' Integration and Lesbegue Integration}\label{sec:Lebesgue}  
 In this section, we look at the compatibility of Connes' integral with Lebesgue's integration. This will answer Question~B. 
 
 \subsection{Compatibility of Dixmier traces with Lebesgue's integration} 
To address the compatibility of Dixmier traces with Lebesgue's integration, the main technical hurdle is the lack of convexity of the weak trace class $\sL_{1,\infty}$. Indeed, $\sL_{1,\infty}$ is a quasi-Banach ideal, but this is not a Banach space or even a locally convex space. Thus, Bochner integration, or even Gel'fand-Pettis integration, of maps with values in $\sL_{1,\infty}$ do not make sense. We can remedy this by passing to the closure $\csL_{1,\infty}$ in the Dixmier-Macaev ideal $\sL_{(1,\infty)}$. Recall that
\begin{equation*}
 \sL_{(1,\infty)}:=\big\{A\in \sK; \ \sum_{j<N} \mu_j(T)=\op{O}(\log N)\big\}. 
\end{equation*}
 This is a Banach ideal with respect to the norm, 
\begin{equation}
 \|A\|_{(1,\infty)}= \sup_{N\geq 1} \frac{1}{\log(N+1)} \sum_{j<N}\mu_j(A),\qquad  A\in \sL_{(1,\infty)}.
 \label{eq:Int.Dixmier-Macaev-ideal}  
 \end{equation}
Note that $\csL_{1,\infty} \subsetneq \sL_{(1,\infty)}$ (see~\cite{KPS:AMS82, SS:JFA13}).

As $\csL_{1,\infty}$ equipped with the $\|\cdot\|_{(1,\infty)}$-norm is a Banach space, Bochner's integration makes sense for maps with values in $\csL_{1,\infty}$. Thus, given a measure space $(\Omega, \mu)$, for any measurable map $A:\Omega \rightarrow \sL_{1,\infty}$ we may at least define its Bochner integral $\int_\Omega A(x)d\mu(x)$ as an element of $\csL_{1,\infty}$ provided that $\int_\Omega \|A(x)\|_{(1,\infty)}d\mu(x)<\infty$. 

\begin{lemma}\label{lem:Int.extension-tau}
 The trace $\tau:\sL_{1,\infty}\rightarrow \ell_\infty/\ell_0$ uniquely extends to a positive linear trace $\overline{\tau}: \csL_{1,\infty}\rightarrow \ell_\infty/\ell_0$ which is continuous with respect to the Dixmier-Macaev norm~(\ref{eq:Int.Dixmier-Macaev-ideal}). 
\end{lemma}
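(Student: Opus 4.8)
The plan is to use the standard "bounded linear transformation" (BLT) principle: a bounded linear map defined on a dense subspace of a normed space, with values in a Banach space, extends uniquely to a bounded linear map on the closure. Here the dense subspace is $\sL_{1,\infty}$ sitting inside its closure $\csL_{1,\infty}$ in the Dixmier--Macaev ideal, and the target Banach space is the quotient $C^*$-algebra $\ell_\infty/\ell_0$ with its quotient norm. So the key technical point to verify is that $\tau:\sL_{1,\infty}\to\ell_\infty/\ell_0$ is continuous \emph{for the $\|\cdot\|_{(1,\infty)}$-norm} (not merely for the $\|\cdot\|_{1,\infty}$-quasi-norm, which is what Lemma 3.2 already gave).

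First I would establish this continuity estimate. For $A\in\sL_{1,\infty}$, the Ky Fan inequalities~(\ref{eq:NC-Integral.Weyl-Ineq}) give $\big|\sum_{j<N}\lambda_j(A)\big|\leq\sum_{j<N}\mu_j(A)$ for every $N\geq 1$. Dividing by $\log(N+1)$ and taking the supremum over $N$, the right-hand side is exactly $\|A\|_{(1,\infty)}$ by the definition~(\ref{eq:Int.Dixmier-Macaev-ideal}). Since the quotient norm of $\tau(A)$ in $\ell_\infty/\ell_0$ is $\limsup_N \frac{1}{\log N}\big|\sum_{j<N}\lambda_j(A)\big|$, which differs from the sup over $N$ of $\frac{1}{\log(N+1)}\big|\sum_{j<N}\lambda_j(A)\big|$ only by harmless lower-order adjustments (and is certainly bounded by it), we obtain $\|\tau(A)\|_{\ell_\infty/\ell_0}\leq\|A\|_{(1,\infty)}$. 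Thus $\tau$ is $\|\cdot\|_{(1,\infty)}$-continuous with norm $\leq 1$.

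Next, since $\sL_{1,\infty}$ is dense in $\csL_{1,\infty}$ by the very definition of $\csL_{1,\infty}$, and $\ell_\infty/\ell_0$ is a Banach space (being a $C^*$-algebra), the BLT theorem yields a unique continuous linear extension $\overline\tau:\csL_{1,\infty}\to\ell_\infty/\ell_0$. It remains to check that $\overline\tau$ inherits the algebraic properties of $\tau$, namely positivity and the trace property. For positivity: if $A\in\csL_{1,\infty}$ is positive, approximate it in $\|\cdot\|_{(1,\infty)}$-norm; one must ensure the approximants can be taken positive, which follows because $A^{1/2}$ lies in the closure of $\sL_{2,\infty}$-type operators, or more directly because if $A_n\to A$ then $|A_n|\to A$ as well (continuity of $T\mapsto|T|$ on self-adjoint elements in these ideals, or a cutoff argument $A_n=AE_{[1/n,\infty)}(A)\in\sL_{1,\infty}$, $A_n\geq 0$, $A_n\to A$), and positivity of $\tau$ on $\sL_{1,\infty}$ together with closedness of the positive cone of $\ell_\infty/\ell_0$ passes to the limit. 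For the trace property: $\Com(\sL_{1,\infty})\subset\ker\tau$ by Lemma~3.2, and I would note that $\Com(\csL_{1,\infty})$ is generated by commutators $[A,T]$ with $T\in\csL_{1,\infty}$, $A\in\sL(\sH)$; approximating $T$ by $T_n\in\sL_{1,\infty}$ gives $[A,T_n]\to[A,T]$ in $\|\cdot\|_{(1,\infty)}$-norm, so $\overline\tau([A,T])=\lim\tau([A,T_n])=0$. Hence $\overline\tau$ is a positive linear trace, and uniqueness is automatic from density and continuity.

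The main obstacle, such as it is, is purely bookkeeping: one must be a little careful that the quotient norm on $\ell_\infty/\ell_0$ — which records only the asymptotic behavior $\limsup_N$ — is genuinely dominated by the $\|\cdot\|_{(1,\infty)}$-norm, and that the positive cone in this quotient is closed so that positivity survives the limit. Both are elementary, so the lemma is essentially a soft functional-analytic consequence of the Ky Fan inequality plus density.
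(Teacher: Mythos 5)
Your proof follows essentially the same route as the paper's: the Ky Fan inequality gives $\|\tau(A)\|\le C\|A\|_{(1,\infty)}$, and the unique continuous extension to the closure $\csL_{1,\infty}$ follows by density; the paper then merely asserts positivity and the trace property, which you spell out (your commutator-approximation argument for the trace property is fine, since $\|[A,T-T_n]\|_{(1,\infty)}\le 2\|A\|\,\|T-T_n\|_{(1,\infty)}$). One caveat on the positivity detail: of your two suggested justifications, the cutoff $A_n=AE_{[1/n,\infty)}(A)$ does not work, because finite-rank operators are not $\|\cdot\|_{(1,\infty)}$-dense in $\csL_{1,\infty}$ --- for instance the operator $T_0$ with $\mu_j(T_0)=(j+1)^{-1}$ stays at distance comparable to $1$ from all finite-rank operators in the Dixmier--Macaev norm --- so $A_n$ need not converge to $A$; and the claimed continuity of $T\mapsto|T|$ in these ideals is not an elementary fact to invoke without proof. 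A correct elementary fix: if $B_n\in\sL_{1,\infty}$ are self-adjoint with $B_n\to A\ge 0$ in $\|\cdot\|_{(1,\infty)}$, then $\lambda_j^{-}(B_n)=\lambda_j^{+}(-B_n)\le \lambda_j^{+}(A-B_n)+\lambda_0^{+}(-A)\le\mu_j(A-B_n)$, so $\|B_n^{-}\|_{(1,\infty)}\le\|A-B_n\|_{(1,\infty)}\to 0$ and hence the positive operators $B_n^{+}\in\sL_{1,\infty}$ converge to $A$; positivity of $\overline{\tau}$ then passes to the limit by closedness of the positive cone of $\ell_\infty/\ell_0$, exactly as you say.
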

\begin{proof}
 It follows from~(\ref{eq:NCInt.continuity-tau}) that there is $C>0$, such that, for all  $A\in \sL_{1,\infty}$, we have
\begin{equation*}
  \|\tau(A)\| \leq \sup_{N\geq 1} \frac1{\log N} \bigg|\sum_{j<N} \lambda_j(A)\bigg| \leq  \frac1{\log N}\sum_{j<N}\mu_j(A)\leq C\|A\|_{(1,\infty)}, 
\end{equation*}
Thus, the linear map $\tau$ is continuous with respect to the Dixmier-Macaev norm, and hence it uniquely extends to a continuous linear map $\overline{\tau}: \csL_{1,\infty}\rightarrow \ell_\infty/\ell_0$. This map is positive and is a trace. The proof is complete. 
\end{proof}

Given any state $\omega$ on $\ell_\infty/\ell_0$, we define the map $\bTrw:\csL_{1,\infty}\rightarrow \C$ by
\begin{equation*}
 \bTrw(A)=\omega\circ \overline{\tau}(A), \qquad A\in \csL_{1,\infty}. 
\end{equation*}
Equivalently, $\bTrw$ is the unique continuous extension to $\csL_{1,\infty}$ of the Dixmier trace $\Trw$.   

In what follows we let $(\Omega,\mu)$ be a measure space. 

\begin{proposition}\label{prop:Int.compatibility} 
 Let $A:\Omega \rightarrow \sL_{1,\infty}$ be a measurable map s.t.\ $\int_\Omega \|A(x)\|_{(1,\infty)}d\mu(x)<\infty$. Then, for every extended limit $\lim_\omega$, the function $\Omega\ni x\rightarrow \Trw[A(x)]$ is integrable, and we have
\begin{equation}
 \int_\Omega \Trw\big[A(x)\big]d\mu(x)= \bTrw \bigg( \int_\Omega A(x)d\mu(x)\bigg). 
 \label{eq:Int.swap} 
\end{equation}
In particular, if $\int_\Omega A(x)d\mu(x)\in \sL_{1,\infty}$, then 
\begin{equation*}
 \int_\Omega \Trw\big[A(x)\big]d\mu(x)= \Trw \bigg( \int_\Omega A(x)d\mu(x)\bigg). 
\end{equation*}
\end{proposition}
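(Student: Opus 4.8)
The plan is to reduce the statement to the standard interplay between Bochner integration and continuous linear functionals. The map $\bTrw = \omega \circ \overline{\tau}$ is, by Lemma~\ref{lem:Int.extension-tau} and the continuity of the state $\omega$, a continuous linear functional on the Banach space $\csL_{1,\infty}$ (with the Dixmier-Macaev norm). Likewise, the hypothesis $\int_\Omega \|A(x)\|_{(1,\infty)}d\mu(x) < \infty$ is exactly Bochner-integrability of $A$ viewed as a map into $\csL_{1,\infty}$, so the Bochner integral $\int_\Omega A(x)d\mu(x)$ exists as an element of $\csL_{1,\infty}$. The fundamental property of the Bochner integral then gives, for any continuous linear form $\varphi$ on $\csL_{1,\infty}$,
\begin{equation*}
 \varphi\bigg( \int_\Omega A(x)d\mu(x)\bigg) = \int_\Omega \varphi\big[A(x)\big] d\mu(x),
\end{equation*}
provided $x \mapsto \varphi[A(x)]$ is integrable, which is automatic since $|\varphi[A(x)]| \leq \|\varphi\|\,\|A(x)\|_{(1,\infty)}$.

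The main steps, then, are as follows. First, I would record that $A$ is Bochner-integrable as a $\csL_{1,\infty}$-valued map: measurability into $\csL_{1,\infty}$ follows from measurability into $\sL_{1,\infty}$ together with the continuous inclusion $\sL_{1,\infty} \hookrightarrow \csL_{1,\infty}$ and separability, and the norm bound supplies integrability of $\|A(\cdot)\|_{(1,\infty)}$. Second, I would observe that $x \mapsto \Trw[A(x)] = \bTrw[A(x)]$ is a scalar measurable function dominated in absolute value by $\|\bTrw\|\,\|A(x)\|_{(1,\infty)}$, hence integrable; this is the assertion that $\Omega \ni x \mapsto \Trw[A(x)]$ is integrable. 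Third, I would apply the commutation of Bochner integrals with the continuous linear functional $\bTrw$ to obtain~(\ref{eq:Int.swap}). Finally, for the last display, if $\int_\Omega A(x)d\mu(x)$ happens to lie in the subspace $\sL_{1,\infty} \subset \csL_{1,\infty}$, then by definition $\bTrw$ restricted to $\sL_{1,\infty}$ is just $\Trw$, so the right-hand side of~(\ref{eq:Int.swap}) equals $\Trw\big(\int_\Omega A(x)d\mu(x)\big)$.

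There is no serious obstacle here: the content of the proposition is precisely the standard fact that Bochner integration commutes with bounded linear functionals, and all the work has already been front-loaded into Lemma~\ref{lem:Int.extension-tau} (which makes $\bTrw$ continuous on a genuine Banach space). The one point deserving a sentence of care is the measurability of $A$ as a map into $\csL_{1,\infty}$: since $\sL_{1,\infty}$ embeds continuously into the separable-range-friendly Banach ideal and $A$ is assumed measurable into $\sL_{1,\infty}$, the composition with the inclusion is measurable, and by Pettis's theorem (separability of the essential range, which holds because $\sigma$-finiteness or the integrability hypothesis confines the range) it is strongly measurable; combined with the integrability of the norm this yields Bochner-integrability. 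I would state this compactly and then invoke the commutation property directly.
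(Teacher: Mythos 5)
Your argument is correct and is essentially the paper's own proof: both rest on Lemma~\ref{lem:Int.extension-tau} making $\bTrw$ a continuous linear functional on the Banach space $\csL_{1,\infty}$, on the hypothesis guaranteeing Bochner-integrability of $A$ as an $\csL_{1,\infty}$-valued map, and on the standard commutation of the Bochner integral with continuous linear forms, with the final display following since $\bTrw$ restricts to $\Trw$ on $\sL_{1,\infty}$. The only difference is that you spell out the strong-measurability point (via Pettis) that the paper leaves implicit, which is a reasonable addition but not a change of method.
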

\begin{proof}
Let $\lim_\omega$ be an extended limit. The continuity of $\bTrw$ and the fact that $A:\Omega \rightarrow \sL_{1,\infty}$ is Bochner-integrable as an $\csL_{1,\infty}$-valued map ensure us that the function $\Trw[A(x)]=\bTrw[A(x)]$ is integrable, and we have
\begin{align*}
 \int_\Omega \Trw\big[A(x)\big]d\mu(x)  =  \int_\Omega  \bTrw\big[A(x)\big]d\mu(x)
 = \bTrw \bigg( \int_\Omega A(x)d\mu(x)\bigg). 
 \end{align*}
 The proof is complete.   
\end{proof}

\begin{corollary}
 Let $A:\Omega \rightarrow \sL_{1,\infty}$ be a measurable map  so that $\int_\Omega \|A(x)\|_{(1,\infty)}d\mu(x)<\infty$. Assume that $A(x)$ is a measurable operator a.e., and $\int_\Omega A(x)d\mu(x)\in \sL_{1,\infty}$. Then $\int_\Omega A(x)d\mu(x)$ is a measurable operator, and we have 
\begin{equation*}
 \int_\Omega \bigg(\bint A(x)\bigg)d\mu(x)= \bint \bigg( \int_\Omega A(x)d\mu(x)\bigg). 
\end{equation*}
\end{corollary}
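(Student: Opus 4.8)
The plan is to deduce this corollary directly from Proposition~\ref{prop:Int.compatibility} together with Theorem~\ref{thm:NCTint.Tauberian-measurable}, which identifies measurable operators with Tauberian ones and identifies the NC integral with the Dixmier trace $\Trw$ for any fixed extended limit. The key observation is that measurability is precisely the statement that $\Trw(A)$ is \emph{independent} of $\omega$; so the strategy is to fix an arbitrary extended limit, apply the swap formula of Proposition~\ref{prop:Int.compatibility}, and then argue that the right-hand side does not depend on the choice of $\omega$.

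First I would fix an extended limit $\lim_\omega$. Since by hypothesis $\int_\Omega \|A(x)\|_{(1,\infty)}d\mu(x)<\infty$, Proposition~\ref{prop:Int.compatibility} applies and gives that $x\mapsto \Trw[A(x)]$ is integrable with
\begin{equation*}
 \int_\Omega \Trw\big[A(x)\big]d\mu(x)= \bTrw \bigg( \int_\Omega A(x)d\mu(x)\bigg).
\end{equation*}
Next, since $\int_\Omega A(x)d\mu(x)\in \sL_{1,\infty}$ by assumption, the last displayed equation of Proposition~\ref{prop:Int.compatibility} lets me replace $\bTrw$ by $\Trw$ on the right, so the identity becomes $\int_\Omega \Trw[A(x)]d\mu(x) = \Trw\big(\int_\Omega A(x)d\mu(x)\big)$. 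Now I use that $A(x)$ is measurable for a.e.\ $x$: for such $x$ we have $\Trw[A(x)] = \bint A(x)$, a value independent of $\omega$, hence the integrand $\Trw[A(x)]$ is $\omega$-independent a.e., and therefore so is the left-hand side $\int_\Omega \Trw[A(x)]d\mu(x) = \int_\Omega \big(\bint A(x)\big)d\mu(x)$. Consequently the right-hand side $\Trw\big(\int_\Omega A(x)d\mu(x)\big)$ is independent of $\omega$ as well, which is exactly the definition of $\int_\Omega A(x)d\mu(x)$ being a measurable operator; its NC integral then equals the common value, giving
\begin{equation*}
 \int_\Omega \bigg(\bint A(x)\bigg)d\mu(x)= \bint \bigg( \int_\Omega A(x)d\mu(x)\bigg).
\end{equation*}

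I do not expect a serious obstacle here; the corollary is essentially a bookkeeping combination of the two earlier results. The one point that deserves a word of care is the measurability (in the Lebesgue sense) and integrability of the scalar function $x\mapsto \Trw[A(x)]$ — but this is already handled inside the proof of Proposition~\ref{prop:Int.compatibility}, since $\Trw = \bTrw$ on $\sL_{1,\infty}$ and $\bTrw$ is a continuous linear form on the Banach space $\csL_{1,\infty}$ while $A$ is Bochner integrable there. The only mildly delicate bit is the logical step "$\omega$-independence of the integrand a.e.\ $\Rightarrow$ $\omega$-independence of the integral", which is immediate once one notes the integrand equals the fixed function $x\mapsto \bint A(x)$ off a null set, independently of $\omega$.
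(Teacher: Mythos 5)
Your argument is correct and is exactly the intended one: the paper states this corollary without proof as an immediate consequence of Proposition~\ref{prop:Int.compatibility}, and your route (fix an arbitrary extended limit, apply the swap formula, replace $\bTrw$ by $\Trw$ using $\int_\Omega A(x)d\mu(x)\in\sL_{1,\infty}$, and note that a.e.\ measurability of $A(x)$ makes the left-hand side $\omega$-independent, forcing $\omega$-independence of $\Trw\big(\int_\Omega A(x)d\mu(x)\big)$) is precisely the bookkeeping the authors intend. Your closing remark about the null set being independent of $\omega$ is the right point of care and is handled correctly.
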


\subsection{Dixmier traces associated with medial limits} 
As pointed out by Connes~\cite{Co:Survey19} another route to look at the compatibility of Connes' integration with Lebesgue integration is to use medial limits. These limits were introduced by Mokodoski~\cite{Me:SPS73}. Namely, by using the continuum hypothesis he proved the following result. 
 
\begin{lemma}[Mokodoski~\cite{Me:SPS73}] There exists a state $\omega:\ell_\infty/\ell_0\rightarrow \C$ which is universally measurable and such that, 
for any complete finite measure $\mu$ on $\ell_\infty/\ell_0$,  we have
\begin{equation}
 \omega\bigg(\int a d\mu(a)\bigg) = \int \omega(a)d\mu(a).
 \label{eq:Int.medial-property}  
\end{equation}
\end{lemma}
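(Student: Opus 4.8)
The plan is to follow Meyer's exposition of Mokobodzki's construction in~\cite{Me:SPS73}. The first move is to reformulate the statement in terms of a \emph{medial limit} on $\ell_\infty$, i.e.\ a universally measurable linear functional $m:\ell_\infty\rightarrow \C$ satisfying $\liminf a_N\leq m(a)\leq \limsup a_N$ for every real sequence $a=(a_N)$. Such an $m$ is a positive linear functional vanishing on $\ell_0$ (by the squeeze inequality) with $m(1)=1$, so it descends to a state $\omega$ on $\ell_\infty/\ell_0$, and universal measurability passes to the quotient. A complete finite measure $\mu$ on $\ell_\infty/\ell_0$ is, up to the usual identifications, the law of a bounded measurable $\ell_\infty/\ell_0$-valued random element; lifting coordinatewise, the integral $\int a\,d\mu(a)$ becomes the coordinatewise integral in $\ell_\infty$, and~(\ref{eq:Int.medial-property}) becomes the Fubini-type interchange $m\big((\textstyle\int\xi_N\,d\mu)_N\big)=\int m\big((\xi_N)_N\big)d\mu$ for every bounded measurable family $(\xi_N)$ of scalar random variables. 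So it suffices to produce a medial limit commuting with integration against every complete finite measure.

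Next comes the transfinite recursion that uses the continuum hypothesis. A complete finite measure on $\ell_\infty/\ell_0$ — equivalently, the law of a bounded $\ell_\infty$-valued random element on a standard probability space — is determined by countably many real parameters, so the collection of all such data has cardinality at most $2^{\aleph_0}$, which under CH equals $\aleph_1$. Well-order this collection as $(\xi^{(\alpha)})_{\alpha<\omega_1}$ and build $m$ by recursion on $\alpha$ so that, at stage $\alpha$, the functional constructed so far already commutes with integration for every $\xi^{(\beta)}$ with $\beta<\alpha$. Since each $\alpha<\omega_1$ has only countably many predecessors, the successor and limit steps of the recursion reduce to a single problem: \emph{given countably many bounded $\ell_\infty$-valued random elements and a partial medial limit already validating countably many earlier Fubini identities, extend the construction so as to validate the new ones as well.}

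The engine for that step is Mokobodzki's amalgamation lemma: a countable family of Fubini constraints can always be met simultaneously, and compatibly with those already imposed, by a suitably ``generic'' refinement of the limiting procedure — concretely, one reparametrizes the discrete index $N$ by a continuous variable and uses a martingale/Lebesgue-density averaging to force $m$ to take the required value on each of the countably many sequences at hand. The main obstacle is precisely making this step work in a way that (a) is genuinely consistent with \emph{all} values of $m$ already fixed at earlier stages — this amalgamation, not the mere existence of Banach limits, is the delicate measure-theoretic heart of the matter — and (b) preserves universal measurability of $m$ through each of the $\omega_1$ stages, so that the function produced at the end is universally measurable; this last point does not follow from any soft fact about $\aleph_1$-indexed unions and is exactly what Mokobodzki's construction, as presented in~\cite{Me:SPS73}, is engineered to guarantee. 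Once $m$ is built, restricting to $\ell_\infty/\ell_0$ yields the desired state $\omega$ together with~(\ref{eq:Int.medial-property}).
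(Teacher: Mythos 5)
This lemma is not proved in the paper at all: it is quoted verbatim from Meyer's exposition of Mokobodzki's work~\cite{Me:SPS73}, and the paper only uses it as a black box. Your proposal, read as a proof attempt, has a genuine gap of exactly the same kind. The first reduction (a medial limit on $\ell_\infty$ descends to a state on $\ell_\infty/\ell_0$, and the Fubini identity~(\ref{eq:Int.medial-property}) becomes a coordinatewise interchange of $m$ with integration) is fine, and the general shape "well-order the constraints under CH and extend transfinitely" is indeed the right silhouette of the classical construction. But at the only point where real work is required you invoke an unproved "amalgamation lemma" and then say explicitly that its two delicate features --- consistency of the new Fubini constraints with all values of $m$ fixed at earlier stages, and preservation of universal measurability across the $\omega_1$ stages --- are "exactly what Mokobodzki's construction is engineered to guarantee." That is the entire content of the theorem; the martingale/density averaging you gesture at is not carried out, and an $\omega_1$-indexed construction does not preserve universal measurability for soft reasons (as you yourself note), so nothing in the write-up actually establishes either (a) or (b). As it stands the argument is a description of the difficulty plus a citation, not a proof.

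A secondary but real defect is the cardinality/parametrization step. The space $\ell_\infty/\ell_0$ is nonseparable, so a complete finite Borel measure on it is not obviously "determined by countably many real parameters"; that would require tightness or separable support, which is not part of the hypothesis. The standard formulation (and the one the paper actually uses in Section~\ref{sec:Lebesgue}) avoids this by quantifying over uniformly bounded sequences of random variables on arbitrary probability spaces rather than enumerating measures on $\ell_\infty/\ell_0$, and Mokobodzki's construction is not an enumeration of constraints one measure at a time but a transfinite iteration of averaging operations whose limit is shown, once and for all, to commute with integration against every such measure. If you want a self-contained proof you would need to either reproduce that construction (including the verification of universal measurability at the $\omega_1$-th step) or honestly state the lemma as a cited result, as the paper does.
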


Let $\omega_\med$ be a state as in the above lemma. The corresponding extended limit is called a \emph{medial limit} and is denoted by $\limmed$. 

The fundamental property~(\ref{eq:Int.medial-property}) implies the following striking feature of medial limits. 

\begin{proposition}[\cite{Me:SPS73}] 
Given a complete finite measure space $(\Omega,\mu)$, let $(f_\ell)_{\ell\geq 1}$ a bounded family in $L^\infty(\Omega, \mu)$.
 \begin{enumerate}
 \item[(i)] The function $\Omega\ni x\rightarrow \limmed f_\ell(x)$ is measurable. 
 
 \item[(ii)] We have 
 \begin{equation*}
 \int_\omega \big( \limmed f_\ell(x) \big) d\mu(x)= \limmed \int_\Omega f_\ell(x)d\mu(x). 
\end{equation*}
\end{enumerate}
\end{proposition}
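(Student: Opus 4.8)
The plan is to deduce both parts directly from the fundamental medial property~(\ref{eq:Int.medial-property}) by transporting the bounded family $(f_\ell)$ on $\Omega$ to a single measure on the compact space $\ell_\infty/\ell_0$. First I would fix a uniform bound $M$ with $\|f_\ell\|_\infty\leq M$ for all $\ell$, and define the map $\Phi:\Omega\rightarrow \ell_\infty/\ell_0$ by $\Phi(x)=\pi\big((f_\ell(x))_{\ell\geq 1}\big)$, i.e. the class of the bounded sequence $(f_1(x),f_2(x),\ldots)$. Since each $f_\ell$ is measurable and the family is uniformly bounded, $\Phi$ takes values in the (norm-)closed ball of radius $M$ in $\ell_\infty/\ell_0$, and — after checking that $\Phi$ is measurable with respect to the Borel $\sigma$-algebra of $\ell_\infty/\ell_0$ — the pushforward $\nu:=\Phi_*\mu$ is a finite Borel measure supported on that ball; its completion is a complete finite measure on $\ell_\infty/\ell_0$. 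The composite $\omega_\med\circ\Phi$ is exactly the function $x\mapsto \limmed f_\ell(x)$, because $\omega_\med$ evaluated on the class of $(f_\ell(x))_\ell$ is by definition $\lim_{\omega_\med}(f_\ell(x))_\ell=\limmed f_\ell(x)$.

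For part~(i), the universal measurability of the state $\omega_\med$ means precisely that $\omega_\med:\ell_\infty/\ell_0\rightarrow \C$ is measurable with respect to the completion of every finite Borel measure; applying this to $\nu$ shows $\omega_\med$ is $\nu$-measurable, and since $\Phi$ is $(\mu,\nu)$-measurable the composite $x\mapsto \omega_\med(\Phi(x))=\limmed f_\ell(x)$ is $\mu$-measurable. For part~(ii), I would invoke the change-of-variables formula for the pushforward: for the bounded measurable (after completion) function $\omega_\med$ on $\ell_\infty/\ell_0$,
\begin{equation*}
 \int_\Omega \omega_\med\big(\Phi(x)\big)\, d\mu(x) = \int_{\ell_\infty/\ell_0} \omega_\med(a)\, d\nu(a),
\end{equation*}
while the medial property~(\ref{eq:Int.medial-property}) applied to the complete finite measure $\nu$ gives
\begin{equation*}
 \int_{\ell_\infty/\ell_0} \omega_\med(a)\, d\nu(a) = \omega_\med\bigg(\int_{\ell_\infty/\ell_0} a\, d\nu(a)\bigg).
\end{equation*}
It remains to identify $\int_{\ell_\infty/\ell_0} a\, d\nu(a)$, as a Bochner (or Gel'fand–Pettis) integral in the Banach space $\ell_\infty/\ell_0$, with the class of the sequence $\big(\int_\Omega f_\ell(x)\,d\mu(x)\big)_{\ell\geq 1}$; this is again a change-of-variables statement, checked by pairing with the coordinate functionals (which are continuous on $\ell_\infty$ and pass to well-defined functionals after one notes they are evaluated on representatives). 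Feeding this identification back in yields $\omega_\med\big((\int_\Omega f_\ell\,d\mu)_\ell\big)=\limmed \int_\Omega f_\ell(x)\,d\mu(x)$, which is the claimed equality.

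The main obstacle I expect is the bookkeeping around $\ell_\infty/\ell_0$ being a quotient: the natural object carrying the sequences $(f_\ell(x))_\ell$ lives in $\ell_\infty$, and one must be careful that the map $\Phi$ into the quotient is Borel measurable and that the vector-valued integral $\int a\,d\nu(a)$ in $\ell_\infty/\ell_0$ genuinely represents the class of the integrated sequence. A clean way around this is to run the same argument one level up in $\ell_\infty$ itself: push $\mu$ forward by $x\mapsto (f_\ell(x))_\ell\in\ell_\infty$, note this lands in a norm-bounded (hence, by a suitable separability reduction using that only countably many coordinates matter, manageable) set, integrate there, and then project; the projection $\pi:\ell_\infty\rightarrow\ell_\infty/\ell_0$ being continuous and linear commutes with both the Bochner integral and the state, so nothing is lost. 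Alternatively, and most economically, one simply cites Mokobodzki's original statement in~\cite{Me:SPS73}, of which this proposition is the standard reformulation, and contents oneself with indicating the reduction above; given the role this plays in the paper, a brief indication of the argument together with the reference should suffice.
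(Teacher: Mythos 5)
The paper itself offers no proof of this proposition: it is quoted from Meyer's exposition of Mokobodzki's theorem \cite{Me:SPS73}, and the text merely asserts that it follows from the fundamental property~(\ref{eq:Int.medial-property}). So your final fallback --- cite \cite{Me:SPS73} and indicate the reduction --- is exactly what the paper does, and as a matter of exposition that is sufficient. Your sketched derivation from the Lemma, however, has gaps that your own proposed fixes do not close, so you should not present it as a proof without more care.

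The first gap is the one you flag: measurability of $\Phi:\Omega\rightarrow \ell_\infty/\ell_0$. Coordinatewise measurability of the $f_\ell$ gives measurability of preimages of closed balls in $\ell_\infty$ (a countable intersection of conditions), but in a \emph{nonseparable} Banach space the balls do not generate the Borel $\sigma$-algebra, so Borel measurability of $\Phi$ --- hence the very existence of the pushforward $\nu=\Phi_*\mu$ as a Borel measure on $\ell_\infty/\ell_0$ --- does not follow and is in fact problematic. Your proposed repair, to run the argument ``one level up'' in $\ell_\infty$ with a Bochner integral, fails for the same reason: Bochner integrability requires essentially separable range (Pettis' measurability theorem), and the range of $x\mapsto (f_\ell(x))_{\ell\geq 1}$ need not be norm-separable --- take $f_\ell(x)$ to be the $\ell$-th binary digit of $x\in[0,1]$, which produces an uncountable, $1$-separated image in $\ell_\infty$; the remark that ``only countably many coordinates matter'' does not rescue norm-separability. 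What does work is to use the Gel'fand (weak-$*$) integral in $\ell_\infty=(\ell_1)^*$, for which coordinatewise measurability and uniform boundedness suffice and which visibly yields the sequence $\big(\int_\Omega f_\ell\,d\mu\big)_{\ell\geq 1}$; or, better, to work in the setting in which Mokobodzki's theorem is actually proved, namely the compact metrizable space $[-M,M]^{\N}$ with the product topology, where $x\mapsto(f_\ell(x))_\ell$ is automatically Borel, the pushforward is a genuine Radon measure, universal measurability of the medial limit applies directly, and the identification of $\int a\,d\nu(a)$ with the integrated sequence is just coordinatewise Fubini. With that substitution your pushforward scheme is correct; as written, the measurability and integrability steps are not justified.
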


\begin{proposition}[\cite{Me:SPS73}] 
Given a complete finite measure space $(\Omega,\mu)$, let $(f_\ell)_{\ell\geq 1}$ a bounded family in $L^\infty(\Omega, \mu)$.
 Then $\Omega\ni x\rightarrow \limmed f_\ell(x)$ is a bounded measurable function such that
 \begin{equation*}
 \int_\omega \big( \limmed f_\ell(x) \big) d\mu(x)= \limmed \int_\Omega f_\ell(x)d\mu(x). 
\end{equation*}
\end{proposition}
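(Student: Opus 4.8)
The plan is to deduce this proposition directly from the preceding one, since the two statements assert essentially the same conclusion: for a bounded family $(f_\ell)_{\ell\geq 1}$ in $L^\infty(\Omega,\mu)$ on a complete finite measure space, the pointwise medial limit $x\mapsto \limmed f_\ell(x)$ is a bounded measurable function and its integral is obtained by swapping the medial limit with integration. The only genuine content to verify is that nothing has been lost: measurability and the interchange formula have already been established in parts (i) and (ii) of the previous proposition, so the present statement is a repackaging, together with the boundedness assertion.

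Concretely, I would argue as follows. First, boundedness: since $(f_\ell)$ is a bounded family in $L^\infty(\Omega,\mu)$, there is $M>0$ with $\|f_\ell\|_{L^\infty}\leq M$ for all $\ell$. For $\mu$-a.e.\ $x$ we have $|f_\ell(x)|\leq M$ for all $\ell$, so $(f_\ell(x))_{\ell\geq 1}\in \ell_\infty$ with sup-norm at most $M$; positivity of the extended limit $\limmed$ then gives $\liminf_\ell f_\ell(x)\leq \limmed f_\ell(x)\leq \limsup_\ell f_\ell(x)$, whence $|\limmed f_\ell(x)|\leq M$ a.e. Thus $x\mapsto \limmed f_\ell(x)$ is (essentially) bounded. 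Second, measurability of this function is exactly part (i) of the previous proposition, and the identity
\begin{equation*}
 \int_\Omega \big( \limmed f_\ell(x) \big) d\mu(x)= \limmed \int_\Omega f_\ell(x)d\mu(x)
\end{equation*}
is exactly part (ii). (Here I would also note that the inner sequence $\big(\int_\Omega f_\ell(x)\,d\mu(x)\big)_{\ell\geq 1}$ is itself bounded, by $|\int_\Omega f_\ell\,d\mu|\leq M\mu(\Omega)<\infty$, so the right-hand side makes sense as the medial limit of a bounded sequence.) This completes the proof.

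There is no real obstacle here; the work was done in the universal measurability property of $\omega_\med$ (Mokobodzki's lemma) and in the previous proposition, which together furnish measurability and the interchange of $\limmed$ with $\int_\Omega$. If anything, the only subtlety to flag is that one should make sure the a.e.\ bound is uniform in $\ell$ so that the exceptional null set does not depend on $\ell$; this follows because $\|f_\ell\|_{L^\infty}\leq M$ for every $\ell$ means $|f_\ell|\leq M$ off a single null set $N_\ell$, and the countable union $\bigcup_\ell N_\ell$ is still null, on the complement of which $(f_\ell(x))_\ell$ is bounded by $M$ for all $\ell$ simultaneously. With that observation the boundedness claim is immediate, and the proposition reduces verbatim to the one preceding it.
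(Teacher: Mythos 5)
Your reduction is correct and matches the paper's treatment: the paper gives no argument of its own for this statement, which is quoted from Meyer--Mokobodzki (\cite{Me:SPS73}) and is in fact a near-verbatim restatement of the immediately preceding proposition, so deducing it from parts (i) and (ii) of that proposition is exactly what is intended. Your only added content --- the uniform a.e.\ bound obtained from a single null set $\bigcup_\ell N_\ell$ together with the fact that an extended limit is a norm-one positive functional, hence $|\limmed f_\ell(x)|\leq \limsup_\ell |f_\ell(x)|\leq M$ --- is handled correctly and suffices for the boundedness claim.
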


In other words, we don't have to worry about the integrability of $\limmed f_\ell(x)$. We may freely swap the integral sign and the medial limit. Alternatively, if $a:\Omega \rightarrow \ell_\infty/\ell_0$ is any bounded measurable map, then
\begin{equation}
  \int_\Omega \omega_\med \big[ a(x) \big] d\mu(x)=\omega_\med\bigg( \int_\Omega a(x) d\mu(x)\bigg). 
  \label{eq:Int.key-property} 
\end{equation}

Denote by $\Trwmed$ the Dixmier trace associated with the extended limit $\limmed$. Let $A:\Omega \rightarrow \sL_{1,\infty}$ be a bounded measurable map. Applying~(\ref{eq:Int.key-property}) to $a(x)=\tau[A(x)]$ gives
\begin{equation*}
\int \omega_\med \circ \tau\big[A(x)\big]d\mu(x)  = \omega_\med\bigg( \int_\Omega \tau\big[A(x)] d\mu(x)\bigg). 
\end{equation*}
Note that $\omega_\med \circ \tau[A(x)]=\Trwmed[A(x)]$, and 
\begin{equation*}
 \int_\Omega \tau\big[A(x)] d\mu(x)=\int_\Omega \overline{\tau}\big[A(x)] d\mu(x)=\overline{\tau}
\bigg(\int_\Omega A(x) d\mu(x)\bigg). 
\end{equation*}
Thus, 
\begin{equation*}
\int \Trwmed\big[A(x)]d\mu(x)= \omega_\med \circ \overline{\tau}\bigg( \int_\Omega A(x) d\mu(x)\bigg) = \bTrwmed\bigg( \int_\Omega A(x) d\mu(x)\bigg). 
\end{equation*}
Therefore, in the special case of medial limits, we recover the formula~(\ref{eq:Int.swap}) as an immediate consequence of the fundamental property~(\ref{eq:Int.medial-property}) of those extended limits.

\section{Strongly Measurable Operators}\label{sec:strongly-measurable} 
In this section, we look at a stronger notion of measurability and show that we still have a sensible notion of integral on operators that satisfies this notion of measurability. 

\subsection{Strong measurability}
In what follows we denote by $T_0$ any positive operator in $\sL_{1,\infty}$ such that $\lambda_j(T_0)=(j+1)^{-1}$ for all $j\geq 0$. Note that any two such operators are unitary equivalent, and hence agree up to an element of $\Com(\sL_{1,\infty})$. 

Recall that a trace $\varphi:\sL_{1,\infty}\rightarrow \C$ is called \emph{normalized} if $\varphi(T_0)=1$. All the Dixmier traces are normalized traces. However, there are positive normalized traces on $\sL_{1,\infty}$ that are not Dixmier traces and don't have a continuous extension to the Dixmier-Macacev ideal $\sL_{(1,\infty)}$ (see, e.g.,~\cite[Theorem~4.7]{SSUZ:AIM15}). Therefore, it stands for reason to consider a stronger notion of measurability (see, e.g.,~\cite{KLPS:AIM13, LSZ:Book, SSUZ:AIM15}).  

\begin{definition}
An operator $T\in \sL_{1,\infty}$ is called \emph{strongly measurable} when there is $L\in \C$ such that $\varphi(T)=L$ for every positive normalized trace $\varphi$ on $\sL_{1,\infty}$. We denote by $\sMs$ the class of strongly measurable operators. 
 \end{definition}
\begin{remark}
 The class of strongly measurable operators is strictly contained in the space of measurable operators (see~\cite[Theorem~7.4]{SSUZ:AIM15}). 
\end{remark}

\begin{lemma}\label{lem:NCInt.positive-continuous-trace}
The space of continuous traces on $\sL_{1,\infty}$ is spanned by normalized positive traces. In fact, any continuous traces is a linear combination of 4 normalized positive traces.  
\end{lemma}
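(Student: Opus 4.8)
The plan is to show that an arbitrary continuous trace $\varphi \colon \sL_{1,\infty} \to \C$ decomposes into a linear combination of at most four \emph{positive} continuous traces, each of which can then be normalized (or is zero). The first step is to reduce to the real/imaginary parts: writing $\varphi = \varphi_1 + i\varphi_2$ where $\varphi_1(A) = \tfrac12(\varphi(A) + \overline{\varphi(A^*)})$ and $\varphi_2(A) = \tfrac1{2i}(\varphi(A) - \overline{\varphi(A^*)})$, one checks that $\varphi_1$ and $\varphi_2$ are \emph{Hermitian} continuous traces, i.e.\ they are real-valued on selfadjoint operators and satisfy $\varphi_k(A^*) = \overline{\varphi_k(A)}$. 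The commutator property is preserved because $\Com(\sL_{1,\infty})$ is a $*$-closed subspace (the adjoint of $[A,T]$ is $-[A^*,T^*]$), and continuity is clear. So it suffices to decompose a Hermitian continuous trace into a difference of two positive continuous traces.

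The key step is a Jordan-type decomposition for a Hermitian continuous trace $\psi$. Because $\psi$ is continuous in the quasi-norm $\|\cdot\|_{1,\infty}$, it is in particular bounded on the order interval between $0$ and $T_0$ in a suitable sense; more usefully, I would invoke the known structure theory: a continuous trace on $\sL_{1,\infty}$ factors through the quotient by $(\sL_{1,\infty})_0$ and, restricted to positive operators, is controlled by the $\|\cdot\|_{(1,\infty)}$-seminorm via the Ky Fan inequalities (as in~(\ref{eq:NCInt.Weyl-norm-ineq}) and Lemma~\ref{lem:NCInt.tau}). The cleanest route is to use the identification of traces on $\sL_{1,\infty}$ with symmetric functionals on the corresponding sequence space (the Calkin correspondence for traces, as developed in~\cite{DFWW:AIM04, LSZ:Book}): a trace $\psi$ corresponds to a dilation-invariant linear functional $f_\psi$ on a commutative function space, continuity of $\psi$ corresponds to continuity of $f_\psi$ with respect to the relevant Banach-lattice norm, and on that commutative side one has an honest Jordan decomposition $f_\psi = f_\psi^+ - f_\psi^-$ into positive parts, with $\|f_\psi^\pm\| \le \|f_\psi\|$. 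Transporting $f_\psi^\pm$ back through the correspondence produces positive continuous traces $\psi^\pm$ on $\sL_{1,\infty}$ with $\psi = \psi^+ - \psi^-$.

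Assembling: $\varphi = (\varphi_1^+ - \varphi_1^-) + i(\varphi_2^+ - \varphi_2^-)$, a linear combination of four positive continuous traces; replacing each nonzero $\varphi_k^\pm$ by $\varphi_k^\pm(T_0)^{-1}\varphi_k^\pm$ (note $\varphi_k^\pm(T_0) > 0$ when $\varphi_k^\pm \ne 0$, since a positive trace vanishing on $T_0$ would vanish on all of $\sL_{1,\infty}$ by positivity and the fact that every positive operator in $\sL_{1,\infty}$ is dominated by a multiple of a unitary conjugate of $T_0$, hence lies in the trace-kernel) and absorbing the positive scalars, we obtain the claimed representation as a linear combination of four \emph{normalized} positive traces. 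Conversely every normalized positive trace is a continuous trace (continuity of positive traces on $\sL_{1,\infty}$ is part of the standard theory cited in~\cite{LSZ:Book}), so the span statement follows.

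The main obstacle is the Jordan decomposition for the Hermitian trace $\psi$: positivity of a \emph{trace} is a weaker and more delicate condition than positivity of a state, and one cannot naively take $\psi^+(A) := \sup\{\psi(B) : 0 \le B \le A\}$ on the noncommutative side and expect it to be a trace. This is exactly why I would push the problem to the commutative (Calkin) picture, where the relevant functional lives on a function lattice and the classical Jordan/Hahn decomposition applies cleanly, and then use the trace–symmetric-functional correspondence to come back. Verifying that this correspondence carries continuous traces to continuously-normed symmetric functionals and is compatible with the order structure is where the cited results from~\cite{DFWW:AIM04} and~\cite{LSZ:Book} do the real work; the bound ``4 traces'' is then just the count $2 \times 2$ from the complex and the Jordan splittings.
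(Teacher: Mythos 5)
Your proposal is correct in substance but takes a genuinely different route from the paper. The paper's proof is short because it cites its two nontrivial ingredients: continuity of every positive trace on $\sL_{1,\infty}$ (\cite[Proposition~2.2]{Po:JMP20}) and the fact that every continuous trace is a linear combination of four positive traces (\cite[Corollary~2.2]{CMSZ:ETDS19}); the only thing actually proved there is the normalization step, namely that a nonzero positive trace $\varphi$ has $\varphi(T_0)>0$, obtained by diagonalizing a positive $A$ with $\varphi(A)>0$ and dominating it by $\|A\|_{1,\infty}$ times a unitary conjugate of $T_0$ --- essentially the argument in your final paragraph. What you do differently is to sketch a proof of the four-trace decomposition itself: split off Hermitian parts (legitimate, since $\Com(\sL_{1,\infty})$ is $*$-closed and $A\mapsto A^*$ is a quasi-norm isometry), pass to the Calkin picture, and perform a Jordan decomposition of the induced symmetric functional. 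That is indeed how the cited result is proved in the literature, so your route is more self-contained in spirit and explains where the count $4=2\times 2$ comes from; but note that the step you delegate to \cite{DFWW:AIM04, LSZ:Book} --- that continuous traces correspond to continuous symmetric functionals and, crucially, that the Jordan positive and negative parts of a continuous symmetric functional are again \emph{symmetric} (hence transport back to traces) and continuous --- is exactly the content of the statement the paper quotes. It is true: if $0\le y\le x$ then $\mu(y)\le\mu(x)$ and $\mu(y)$ can be re-placed under any rearrangement of $x$, so the supremum defining the positive part depends only on singular value sequences, and order intervals in $\ell_{1,\infty}$ are quasi-norm bounded so the Riesz--Kantorovich decomposition applies (the bound $\|f^\pm\|\le\|f\|$ should be taken up to the quasi-norm constant). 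So in effect your proof outsources the heavy lifting to a different layer of the same references; the paper's citation buys brevity, your sketch buys transparency about the mechanism, and your normalization argument coincides with the paper's.
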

\begin{proof}
 Every positive trace on $\sL_{1,\infty}$ is continuous (see, e.g., \cite[Proposition~2.2]{Po:JMP20}). Conversely, every continuous trace on $\sL_{1,\infty}$  is a linear combination of 4 positive traces (see~\cite[Corollary~2.2]{CMSZ:ETDS19}). To complete the proof it is enough to show that every positive trace is a scalar multiple of a normalized positive trace. 
 
 Let  $\varphi$ be a non-zero positive trace. As $\sL_{1,\infty}$ is spanned by its positive cone, there is a positive operator $A\in \sL_{1,\infty}$ such that $\varphi(A)>0$. Let $(\xi_j)_{j\geq 0}$ be an orthonormal basis of $\sH$ such that $A\xi_j=\mu_j(T)\xi_j$ for all $j\geq 0$. Let $T_0$ be the operator on $\sH$ such that $T_0\xi_j=(j+1)^{-1} \xi_j$. As $\mu_j(A)\leq \|A\|_{1,\infty}(j+1)^{-1}$ we see that $A\leq \|A\|_{1,\infty}T_0$. The positivity of $\varphi$ then implies that $0<\varphi(A)\leq \|T\|_{1,\infty}\varphi(T_0)$. Thus, $\varphi(T_0)>0$, and so $\tilde{\varphi}:=\varphi(T_0)^{-1}\varphi$ is a  normalized positive trace. As $\varphi=\varphi(T_0)\tilde{\varphi}$ we see that every positive trace is a scalar multiple of a normalized positive trace. The proof is complete. 
\end{proof}

Lemma~\ref{lem:NCInt.positive-continuous-trace} implies the following characterization of strongly measurable operators in terms of continuous traces. 

\begin{lemma}\label{prop:NCInt.charact-strongly-measurable}
 Let $A\in \sL_{1,\infty}$. The following are equivalent:
\begin{enumerate}
 \item[(i)] $A$ is strongly measurable and $\bint A=L$. 
 
 \item[(ii)] $\varphi(A)=\varphi(T_0)L$ for every continuous trace on $\sL_{1,\infty}$. 
\end{enumerate}
\end{lemma}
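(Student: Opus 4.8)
The plan is to deduce this equivalence from Lemma~\ref{lem:NCInt.positive-continuous-trace}, which writes every continuous trace as a linear combination of (at most four) normalized positive traces, together with the definitions of strong measurability and of $\bint$.

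\smallskip

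\textbf{Proof of (i) $\Rightarrow$ (ii).} Assume $A\in\sMs$ with $\bint A=L$. Let $\varphi$ be any continuous trace on $\sL_{1,\infty}$. By Lemma~\ref{lem:NCInt.positive-continuous-trace} we may write $\varphi=\sum_{k=1}^{4}c_k\varphi_k$ with $c_k\in\C$ and each $\varphi_k$ a normalized positive trace, so $\varphi_k(T_0)=1$ and $\varphi_k(A)=L$ by the definition of strong measurability (recall the common value of all positive normalized traces on a strongly measurable operator is by definition $\bint A$, since every Dixmier trace is a normalized positive trace). Hence
\begin{equation*}
 \varphi(A)=\sum_{k=1}^{4}c_k\varphi_k(A)=\Big(\sum_{k=1}^{4}c_k\Big)L
 =\Big(\sum_{k=1}^{4}c_k\varphi_k(T_0)\Big)L=\varphi(T_0)L.
\end{equation*}

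\textbf{Proof of (ii) $\Rightarrow$ (i).} Conversely, assume $\varphi(A)=\varphi(T_0)L$ for every continuous trace $\varphi$ on $\sL_{1,\infty}$. Every positive trace is continuous (as recalled in the proof of Lemma~\ref{lem:NCInt.positive-continuous-trace}), so in particular the hypothesis applies to every normalized positive trace $\varphi$, for which $\varphi(T_0)=1$ and thus $\varphi(A)=L$. Therefore $A$ is strongly measurable. Applying this to any Dixmier trace $\Trw$ (which is normalized and positive) gives $\Trw(A)=L$ for all $\omega$, so $A$ is measurable with $\bint A=L$. Hence (i) holds.

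\smallskip

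The only subtlety — and it is a bookkeeping point rather than a genuine obstacle — is to make sure the constant in (i) is the \emph{same} $L$ as the NC integral: this is immediate because Dixmier traces are themselves normalized positive traces, so the common value $\varphi(A)$ over all positive normalized traces forced by strong measurability necessarily coincides with $\Trw(A)=\bint A$. Everything else is a one-line linearity computation using the decomposition from Lemma~\ref{lem:NCInt.positive-continuous-trace}.
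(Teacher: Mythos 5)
Your proof is correct and follows exactly the route the paper intends: the paper states this lemma as an immediate consequence of Lemma~\ref{lem:NCInt.positive-continuous-trace} (decomposition of continuous traces into normalized positive traces) without writing out details, and your argument simply makes that implication explicit, including the correct bookkeeping that the common value over positive normalized traces must equal $\bint A$ because Dixmier traces are themselves normalized and positive.
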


This implies the following properties. 

\begin{proposition}\label{prop:NCInt.strong-measurable} 
 The following holds. 
\begin{enumerate}
 \item $\sMs$ is a closed subspace of $\sM$ containing $\Com(\sL_{1,\infty})$ and $(\sL_{1,\infty})_0$. In particular, every infinitesimal operator of order~$>1$ is strongly measurable. 
  
\item The space $\sMs$ does not depend on the inner product of $\sH$.   
  
 \item Let $A\in \sL_{1,\infty}$ be such that 
\begin{equation}
 \sum_{j<N} \lambda_j(A)= L \log N+ \op{O}(1).
 \label{eq:NCInt.super-Tauberian}
\end{equation}
Then $A$ is strongly measurable and $\bint A= L$. 
\end{enumerate}
\end{proposition}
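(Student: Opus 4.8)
The plan is to reduce all three assertions to results already established about traces on $\sL_{1,\infty}$ — Proposition~\ref{prop:NCint.Com-DFWWW}, Lemma~\ref{lem:NCInt.positive-continuous-trace}, and Lemma~\ref{prop:NCInt.charact-strongly-measurable} — together with the elementary identity $\sum_{j<N}\lambda_j(T_0)=\log N+\op{O}(1)$. For~(1), I would write $\sMs=\bigcap_{\varphi,\psi}\ker(\varphi-\psi)$, the intersection running over all positive normalised traces; since every positive trace is continuous, each $\varphi-\psi$ is continuous and $\sMs$ is a closed subspace. As the Dixmier traces are positive normalised traces, strong measurability of $A$ forces $\Trw(A)$ to be independent of $\omega$; hence $\sMs\subseteq\sM$ and $\bint$ restricts to this common value. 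If $A\in\Com(\sL_{1,\infty})$ then $\varphi(A)=0$ for every trace, so $A\in\sMs$ with $\bint A=0$. For $(\sL_{1,\infty})_0$: a finite-rank operator $F$ has an eventually vanishing eigenvalue sequence, so $\sum_{j<N}\lambda_j(F)=\op{O}(1)$, whence $F\in\Com(\sL_{1,\infty})$ by Proposition~\ref{prop:NCint.Com-DFWWW}; since every positive normalised trace is continuous and annihilates $\Com(\sL_{1,\infty})$, it annihilates the $\|\cdot\|_{1,\infty}$-closure of the finite-rank operators, which is $(\sL_{1,\infty})_0$. Finally an infinitesimal of order $\alpha>1$ satisfies $\mu_j=\op{O}(j^{-\alpha})=\op{o}(j^{-1})$, so it lies in $(\sL_{1,\infty})_0$.

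For~(3), since $T_0\geq 0$ with $\lambda_j(T_0)=(j+1)^{-1}$ one has $\sum_{j<N}\lambda_j(LT_0)=L\log N+\op{O}(1)$ for every $L\in\C$; so hypothesis~(\ref{eq:NCInt.super-Tauberian}) says exactly that $\sum_{j<N}\lambda_j(A)=\sum_{j<N}\lambda_j(LT_0)+\op{O}(1)$, which by Proposition~\ref{prop:NCint.Com-DFWWW} means $A-LT_0\in\Com(\sL_{1,\infty})$. Hence $\varphi(A)=\varphi(LT_0)=L\varphi(T_0)=L$ for every positive normalised trace $\varphi$, so $A$ is strongly measurable with $\bint A=L$.

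For~(2), I would invoke Lemma~\ref{prop:NCInt.charact-strongly-measurable}: $A\in\sMs$ with $\bint A=L$ iff $\varphi(A)=\varphi(T_0)L$ for every \emph{continuous} trace $\varphi$ on $\sL_{1,\infty}$. Passing to an equivalent inner product changes $\sL_{1,\infty}$ only up to an equivalent quasi-norm — the singular values $\mu_j(T)=\inf\{\|T-F\|;\ \rk F\leq j\}$ depending only on the operator norm — and leaves the algebra, the commutator subspace, and the eigenvalue sequence of $T_0$ (hence $T_0$ modulo $\Com(\sL_{1,\infty})$) unchanged; so the set of continuous traces and the scalars $\varphi(T_0)$ are unchanged, and the condition of Lemma~\ref{prop:NCInt.charact-strongly-measurable} does not see the inner product. (Equivalently one runs the pushforward argument behind Corollary~\ref{cor:NCInt.embedding-iso}: a continuous linear isomorphism $\iota$ induces an algebra isomorphism $\iota_*$ preserving $\Com(\sL_{1,\infty})$ and sending $T_0$ to an operator with the same eigenvalue sequence, hence carries continuous traces to continuous traces and $\sMs$ onto $\sMs$, preserving $\bint$.) I expect this last point to be the only delicate one: ``positive normalised trace'' is not manifestly inner-product-independent because positivity invokes the adjoint, which is exactly why one routes the argument through the continuous-trace characterisation of Lemma~\ref{prop:NCInt.charact-strongly-measurable}, continuous traces depending only on the operator-norm topology and the algebra structure.
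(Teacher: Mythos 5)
Your proposal is correct and follows essentially the paper's own route: part (2) via the continuous-trace characterization of Lemma~\ref{prop:NCInt.charact-strongly-measurable}, part (3) by recognizing~(\ref{eq:NCInt.super-Tauberian}) as the condition $A-LT_0\in\Com(\sL_{1,\infty})$ of Proposition~\ref{prop:NCint.Com-DFWWW}, and the inclusions $\Com(\sL_{1,\infty})\cup(\sL_{1,\infty})_0\subset\sMs$ by continuity of positive traces and density of finite-rank operators. The only divergence is the closedness of $\sMs$, which you get directly as $\bigcap_{\varphi,\psi}\ker(\varphi-\psi)$ over positive normalized traces, whereas the paper argues sequentially using the closedness of $\sM$ and continuity of $\bint$; both are valid, and yours is marginally more direct.
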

\begin{proof}
 It is immediate that $\sMs$ is a subspace of $\sM$. Moreover, as the condition (ii) of Lemma~\ref{prop:NCInt.charact-strongly-measurable} depends only on the topological vector space structure of $\sL_{1,\infty}$, we see that $\sMs$ does not depend on the inner product of $\sH$. 
 
 Let $(A_\ell)_{\ell\geq 0}$ be sequence in $\sMs$ converging to $A$ in $\sL_{1,\infty}$. Set $\alpha_\ell=\bint A_\ell$. As $\sM$ is a closed subspace of 
 $\sL_{1,\infty}$ and $\bint$ is a continuous linear form, we see that $\alpha_\ell \rightarrow \alpha$ as $\ell \rightarrow \infty$. Let $\varphi$ be a positive normalized trace on $\sL_{1,\infty}$. We have $\varphi(A_\ell)=\bint A_\ell=\alpha_\ell$ for all $\ell \geq 0$. As $\varphi$ is a continuous trace, we have
\begin{equation*}
 \varphi(A)=\lim_{\ell \rightarrow \infty} \varphi(A_\ell)= \lim_{\ell \rightarrow \infty} \alpha_\ell=\alpha. 
\end{equation*}
Thus $\varphi(A)=\alpha$ for every positive normalized trace, and so $A$ is strongly measurable. This shows that $\sMs$ is a closed subspace of $\sM$. 

Furthermore, if $A\in \Com(\sL_{1,\infty})$ and $\varphi$ is a continuous trace on $\sL_{1,\infty}$, then $\varphi(A)=0$, and so by using Proposition~\ref{prop:NCInt.charact-strongly-measurable} we deduce that $A$ is strongly measurable. It follows from Proposition~\ref{prop:NCint.Com-DFWWW} that the ideal $\sR$ of finite rank operators is contained in $\Com(\sL_{1,\infty})$. As $\sMs$ is closed,  we deduce that it contains the closure of $\sR$ in $\sL_{1,\infty}$, i.e., the ideal $(\sL_{1,\infty})_0$.

Finally, let $A\in \sL_{1,\infty}$ satisfy~(\ref{eq:NCInt.super-Tauberian}). In addition, let $T_0$ be any positive operator in $\sL_{1,\infty}$ such that $\mu_j(T_0)=(j+1)^{-1}$. Then~(\ref{eq:NCInt.super-Tauberian}) is the r.h.s.~of~(\ref{eq:NCInt.A-B-Com}) for $B=LT_0$, and so $A-LT_0\in \Com(\sL_{1,\infty})$. In particular, given any normalized positive trace $\varphi$ on $\sL_{1,\infty}$ we have $\varphi(A)=L\varphi(T_0)=L$. That is,  $A\in \sMs$ and $\bint A=L$. The proof is complete. 
\end{proof}

\begin{remark}
 Let us called \emph{strongly Tauberian} any operator $A\in \sL_{1,\infty}$ satisfying~(\ref{eq:NCInt.super-Tauberian}). It same way as in the proof of Proposition~\ref{prop:NCInt.properties-Lambda}, it follows from Lemma~\ref{lem:NCInt.additivity} that the strongly Tauberian operators form a subspace of $\sMs$. This is a proper subspace. For instance, every operator in $\overline{\Com(\sL_{1,\infty})}\setminus \Com(\sL_{1,\infty})$ is strongly measurable, but is not strongly Tauberian. 
\end{remark}

\begin{remark}
As $\sMs$ is a subspace of $\sM$ containing $\Com(\sL_{1,\infty})$, we see that the NC integral $\bint$ induces a positive continuous trace on $\sMs$ which is annihilated by operators in $(\sL_{1,\infty})_0$. This restriction too satisfies the Ansatz for the NC integral.  
\end{remark}

\begin{remark}
 We refer to~\cite[Proposition 7.2]{SSUZ:AIM15} for a characterization of strongly measurable operators in terms of eigenvalue sequences. 
\end{remark}

In what follows we denote by $\sH'$ another Hilbert space. We also denote by $\sMs(\sH)$ (resp., $\sMs(\sH')$) the space of strongly measurable operators on $\sH$ (resp., $\sH'$). 
We have the following invariance property of strong measurable operators.

\begin{proposition}\label{prop:NCint.spectral-inv.strong}
 Let $A\in \sL_{1,\infty}(\sH)$ and $B\in \sL_{1,\infty}(\sH')$ have same non-zero eigenvalues with same multiplicities. Then 
 $A$ is strongly measurable if and only if $B$ is strongly measurable. 
\end{proposition}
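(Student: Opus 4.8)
The plan is to reduce to the case where $A$ and $B$ act on the same Hilbert space, to observe that operators with the same non-zero eigenvalues then differ by an element of $\Com(\sL_{1,\infty})$, and finally to invoke the trace-theoretic characterization of strong measurability from Lemma~\ref{prop:NCInt.charact-strongly-measurable}. To begin with I would dispose of the degenerate cases. If at least one of $\sH,\sH'$ is finite-dimensional, then both $A$ and $B$ are strongly measurable: indeed, on a finite-dimensional space of dimension $d$ every operator is strongly measurable, since $\Com(\sL_{1,\infty})$ is then the hyperplane of trace-zero operators and the only normalized positive trace is $H_d^{-1}\Tr$ (with $H_d$ the $d$-th harmonic number); and if exactly one of the two spaces is finite-dimensional, the operator on the other has only finitely many non-zero eigenvalues, so $\sum_{j<N}\lambda_j=\op{O}(1)$ and Proposition~\ref{prop:NCint.Com-DFWWW}, applied with the zero operator, places it in $\Com(\sL_{1,\infty})$, whence it too is strongly measurable. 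Thus from now on I may assume $\sH$ and $\sH'$ are both infinite-dimensional.

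Being separable and infinite-dimensional, $\sH$ and $\sH'$ are isomorphic; fix a unitary $U\colon\sH'\rightarrow\sH$. Conjugation $T\mapsto UTU^{*}$ is a linear homeomorphism $\sL_{1,\infty}(\sH')\rightarrow\sL_{1,\infty}(\sH)$ that carries commutators to commutators and positive operators to positive operators, and --- using the unitary invariance of eigenvalue sequences together with the fact that any positive operator with eigenvalues $(j+1)^{-1}$ may play the role of $T_0$ --- carries normalized positive traces to normalized positive traces; it therefore preserves strong measurability. Replacing $B$ by $UBU^{*}$, I may assume $B\in\sL_{1,\infty}(\sH)$, still with the same non-zero eigenvalues as $A$ and the same multiplicities. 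Enumerating this common multiset of non-zero eigenvalues in non-increasing order of modulus and appending zeros if necessary, I obtain a single sequence that is simultaneously an eigenvalue sequence for $A$ and for $B$; for this choice $\sum_{j<N}\lambda_j(A)=\sum_{j<N}\lambda_j(B)$, and hence by Remark~\ref{rmk:NCint.two-eig-seq} one has $\sum_{j<N}\lambda_j(A)=\sum_{j<N}\lambda_j(B)+\op{O}(1)$ for \emph{every} choice of eigenvalue sequences. Proposition~\ref{prop:NCint.Com-DFWWW} then gives $A-B\in\Com(\sL_{1,\infty}(\sH))$.

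Finally, since every continuous trace on $\sL_{1,\infty}(\sH)$ annihilates $\Com(\sL_{1,\infty}(\sH))$, one has $\varphi(A)=\varphi(B)$ for every continuous trace $\varphi$ on $\sL_{1,\infty}(\sH)$. By Lemma~\ref{prop:NCInt.charact-strongly-measurable}, $A$ is strongly measurable with $\bint A=L$ if and only if $\varphi(A)=\varphi(T_0)L$ for all continuous traces $\varphi$, and likewise for $B$; as $\varphi(A)=\varphi(B)$ throughout, it follows that $A$ is strongly measurable if and only if $B$ is, and then moreover $\bint A=\bint B$. I do not expect a genuine obstacle here: the only points needing a little care are the reduction to a common Hilbert space (with the routine check that conjugation by a unitary respects all the structure entering Lemma~\ref{prop:NCInt.charact-strongly-measurable}) and the elementary observation that coinciding non-zero eigenvalue multisets force the eigenvalue-sum difference to be $\op{O}(1)$; granting these, the conclusion is immediate from Proposition~\ref{prop:NCint.Com-DFWWW} and Lemma~\ref{prop:NCInt.charact-strongly-measurable}.
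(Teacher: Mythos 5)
Your proof is correct and follows essentially the same route as the paper: reduce to a common Hilbert space by unitary conjugation (checking that the induced correspondence of positive normalized traces preserves strong measurability), then use a common eigenvalue sequence together with Proposition~\ref{prop:NCint.Com-DFWWW} to place $A-B$ in $\Com(\sL_{1,\infty})$ and conclude via the trace characterization. The finite-dimensional digression is superfluous, since the paper's framework (e.g.\ the existence of the operator $T_0$ with $\lambda_j(T_0)=(j+1)^{-1}$ for all $j\geq 0$) presupposes infinite-dimensional separable Hilbert spaces.
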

\begin{proof}
By assumption any eigenvalue sequence for $A$ is an eigenvalue sequence for $B$, and vice versa. If $\sH'=\sH$, then combining this with Corollary~\ref{cor:NCint.Com} shows that $A-B$ is an operator in $\Com(\sL_{1,\infty})$, and hence is strongly measurable. As $\sMs(\sH)$ is a subspace of $\sL_{1,\infty}$, it follows that $A$ is strongly measurable if and only if $B$ is strongly measurable.

Suppose now that $\sH'\neq \sH$. Let $U:\sH\rightarrow \sH'$ be a unitary isomorphism, and set $B'=U^*BU$. Then $B'$ is an operator in $\sL_{1,\infty}(\sH)$ with same non-zero eigenvalues and same multiplicities as $A$ and $B$. Thus, by the first part of the proof, $A$ is strongly measurable if and only $B'$ is strongly measurable.

Let $\alpha_U:\sL(\sH')\rightarrow \sL(\sH)$ be defined by $\alpha_U(T)=U^*TU$, $T\in \sL(\sH)$. This is a $*$-isomorphism of $C^*$-algebras. It induces an isometric isomorphism $\alpha_U:\sL_{1,\infty}(\sH')\rightarrow \sL_{1,\infty}(\sH)$. By duality we get a one-to-one correspondance $\varphi\rightarrow \varphi\circ \alpha_U$ between positive traces on $\sL_{1,\infty}(\sH')$ and positive traces on $\sL_{1,\infty}(\sH)$. It then follows that $B$ is strongly measurable if and if only $B'$ is strongly measurable. This gives the result when $\sH'\neq \sH$. The proof is complete. 
\end{proof}

Using the same the same type of argument  that lead to Corollary~\ref{cor:NCInt.embedding-iso} and Corollary~\ref{cor:NCInt.embedding} we obtain the following consequence of Proposition~\ref{prop:NCint.spectral-inv.strong}. 

\begin{corollary}
 Let $\iota:\sH'\rightarrow \sH$ be a Hilbert space embedding.
\begin{enumerate}
 \item If $A\in \sL_{1,\infty}(\sH')$, then $\iota_*A$ is strongly measurable if and if only $A$ is strongly measurable. 
 
 \item If $\iota$ is an isomorphism, then $\iota \sMs(\sH') \iota^{-1}=\sMs(\sH)$. 
\end{enumerate}
\end{corollary}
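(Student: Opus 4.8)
The plan is to obtain this corollary as an immediate consequence of the spectral invariance of strong measurability established in Proposition~\ref{prop:NCint.spectral-inv.strong}, following verbatim the argument by which Corollary~\ref{cor:NCInt.embedding} and Corollary~\ref{cor:NCInt.embedding-iso} were deduced from Proposition~\ref{prop:NCInt.spectral-invariance}.

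For part~(1), I would start from $A\in \sL_{1,\infty}(\sH')$ and invoke the two appendix facts Propositions~\ref{prop:App.embed-Schatten} and~\ref{prop:App.eigenvalues}: by the former the pushforward $\iota_*A$ defined in~(\ref{eq:NCInt.iota*A}) lies in $\sL_{1,\infty}(\sH)$, and by the latter the operators $A$ and $\iota_*A$ have exactly the same non-zero eigenvalues with the same multiplicities. Applying Proposition~\ref{prop:NCint.spectral-inv.strong} with $B=\iota_*A$ then yields at once that $\iota_*A$ is strongly measurable if and only if $A$ is.

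For part~(2), when $\iota$ is an isomorphism the orthogonal projection $\pi$ in~(\ref{eq:NCInt.iota*A}) onto $\ran\iota=\sH$ is the identity, so $\iota_*A=\iota A\iota^{-1}$ for every $A\in \sL_{1,\infty}(\sH')$. Part~(1) applied to $\iota$ gives the inclusion $\iota\,\sMs(\sH')\,\iota^{-1}\subseteq \sMs(\sH)$; applying part~(1) to the inverse isomorphism $\iota^{-1}:\sH\to\sH'$ shows that $\iota^{-1}B\iota$ is strongly measurable if and only if $B$ is, so that every $B\in \sMs(\sH)$ satisfies $\iota^{-1}B\iota\in\sMs(\sH')$ and hence $B=\iota(\iota^{-1}B\iota)\iota^{-1}\in \iota\,\sMs(\sH')\,\iota^{-1}$. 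The two inclusions together give the asserted equality.

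I do not expect any genuine obstacle here: the whole content sits in the two appendix propositions, which guarantee that the pushforward $\iota\circ A\circ\iota^{-1}\circ\pi$ — which for a non-invertible embedding is not a plain conjugation — nevertheless stays in the weak trace class and leaves the non-zero spectrum unchanged. Once those are in hand, each part of the corollary is a one-line reduction to Proposition~\ref{prop:NCint.spectral-inv.strong}, and the only thing requiring care is the bookkeeping with the projection $\pi$ in the non-invertible case.
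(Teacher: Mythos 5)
Your proposal is correct and follows exactly the route the paper intends: combine Propositions~\ref{prop:App.embed-Schatten} and~\ref{prop:App.eigenvalues} with the spectral invariance of strong measurability (Proposition~\ref{prop:NCint.spectral-inv.strong}), mirroring how Corollaries~\ref{cor:NCInt.embedding} and~\ref{cor:NCInt.embedding-iso} were deduced, and then specialize to the invertible case where $\iota_*A=\iota A\iota^{-1}$. Nothing further is needed.
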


\subsection{Connes' trace theorem}\label{subsec.PsiDOs} 
Suppose that $(M^n,g)$ is a closed Riemannian manifold and $E$ is a Hermitian vector bundle. Given $m\in \R$,  we denote by $\Psi^m(M,E)$ the space of $m$-th order classical pseudodifferential operators (\psidos) $P:C^\infty(M,E)\rightarrow C^\infty(M,E)$. If $P\in \Psi^m(M,E)$, then we denote by $\sigma(P)(x,\xi)$ its principal symbol; this is a smooth section of $\End(E)$ over $T^*M\setminus 0$. Any $P\in \Psi^m(M,E)$ with $m\leq 0$ extends to a bounded operator $P:L_2(M,E)\rightarrow L_2(M,E)$. If in addition $m<0$, then we get an operator in the weak Schatten class $\sL_{p,\infty}$ with $p=n|m|^{-1}$, i.e., an infinitesimal operator of order~$1/p$. 

Setting $\Psi^\Z(M,E)=\bigcup_{m\in \Z}\Psi^m(M,E)$, let $\Res :\Psi^\Z(M,E)\rightarrow \C$ be the noncommutative residue trace of Guillemin~\cite{Gu:AIM85} and Wodzicki~\cite{Wo:NCRF}. It appears as the residual trace on integer-order \psidos\ induced by the analytic extension of the ordinary trace to all non-integer order \psidos~(see~\cite{Gu:AIM85, Wo:NCRF}). A result of Wodzicki~\cite{Wo:HDR} further asserts this is the unique trace up to constant multiple on the algebra $\Psi^\Z(M,E)$ if $M$ is connected and has dimension $n\geq 2$ (see also~\cite{Le:AGAG99, Po:JAM10}). The noncommutative residue is a local functional. Namely, if $P\in \Psi^\Z(M,E)$, then
\begin{equation*}
 \Res(P) = \int_M \tr_E[c_P(x)], 
\end{equation*}
where $c_P(x)$ is an $\End(E)$-valued 1-density which is given in local coordinates by
\begin{equation*}
 c_P(x)=(2\pi)^{-n}\int_{|\xi|=1} a_{-n}(x,\xi) d^{n-1}\xi,
\end{equation*}
 where $a_{-n}(x,\xi)$ is the symbol of degree $-n$ of $P$. In particular, if $P$ has order~$-n$, then 
 \begin{equation*}
 \Res(P)= (2\pi)^{-n} \int_{S^*M} \tr_E\big[\sigma(P)(x,\xi)\big] dxd\xi,
\end{equation*}
where $S^*M=T^*M/\R^*_+$ is the cosphere bundle and $dxd\xi$ is the Liouville measure. 

\begin{proposition}[Connes's Trace Theorem~\cite{Co:CMP88, KLPS:AIM13}] \label{prop:NCInt.Connes-trace-thm} 
Every operator $P\in \Psi^{-n}(M,E)$ is strongly measurable, and we have
\begin{equation}
 \bint P= \frac{1}{n} \Res(P). 
 \label{eq:Connes-trace-thm} 
\end{equation}
 \end{proposition}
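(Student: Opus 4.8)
The plan is to reduce Connes' trace theorem to the zeta-function residue formula for the Laplacian and then transport this information along the noncommutative residue using its uniqueness and the characterization of strong measurability established in Proposition~\ref{prop:NCInt.strong-measurable}. The key point is that an operator satisfying the ``strongly Tauberian'' estimate \eqref{eq:NCInt.super-Tauberian} is automatically strongly measurable with NC integral equal to the corresponding coefficient, so it suffices to prove that every $P\in\Psi^{-n}(M,E)$ satisfies
\begin{equation*}
 \sum_{j<N}\lambda_j(P)=\frac1n\Res(P)\,\log N+\op{O}(1).
\end{equation*}

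First I would fix a positive elliptic operator, e.g.\ $\Delta:=\Delta_g\otimes\id_E+R$ for suitable $R>0$, so that $\Delta^{-n/2}\in\Psi^{-n}(M,E)$ is positive with a well-understood zeta function. Standard heat-kernel/resolvent asymptotics (Seeley, Gilkey) give that $\zeta_{\Delta^{-n/2}}(s)=\Tr\big(\Delta^{-ns/2}\big)$ extends meromorphically with a simple pole at $s=1$ whose residue is $\tfrac1n\Res(\Delta^{-n/2})$ up to the normalization in the definition of $\Res$; equivalently $\sum_{j<N}\mu_j(\Delta^{-n/2})=\tfrac1n\Res(\Delta^{-n/2})\log N+\op{O}(1)$ by a Tauberian/Karamata argument, which in fact upgrades to the $\op{O}(1)$ remainder because $\Delta^{-n/2}$ is a positive \psido\ of order exactly $-n$ (one gets a second-order term from the full symbol expansion, giving a genuine $\op{O}(1)$ after dividing by $\log N$ is NOT what is needed --- rather the two-term resolvent expansion yields $\sum_{j<N}\mu_j=c\log N+\op{O}(1)$ directly). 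This handles the positive model operator.

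Next I would pass from $\Delta^{-n/2}$ to a general $P\in\Psi^{-n}(M,E)$. Write $P=\Delta^{-n/2}Q$ with $Q\in\Psi^0(M,E)$ bounded on $L_2$. The difference $P-\lambda\Delta^{-n/2}$ for an appropriate scalar $\lambda$ (chosen so that $\Res$ matches) is again order $-n$; more robustly, I would argue at the level of the residue trace: the functional $P\mapsto \limsup_N\big|\sum_{j<N}\lambda_j(P)-\tfrac1n\Res(P)\log N\big|$ is finite on $\Psi^{-n}$ by the additivity Lemma~\ref{lem:NCInt.additivity} together with the fact that $\Psi^{-n}(M,E)$ is spanned modulo $\Psi^{-n-1}(M,E)$ (which lies in $(\sL_{1,\infty})_0$, hence is strongly measurable with vanishing integral) by operators of the form $f\Delta^{-n/2}$ and their products with zeroth-order symbols --- and for these the result follows from the $\Delta^{-n/2}$ case by the commutator-space identity of Proposition~\ref{prop:NCint.Com-DFWWW}, since $f\Delta^{-n/2}-\Delta^{-n/2}f\in\Psi^{-n-1}$ and more generally the principal-symbol-level computation shows $P-\tfrac1n\Res(P)\,nT_0$-type comparisons land in $\Com(\sL_{1,\infty})$ once the log-divergent parts agree. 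Cleanly: reduce to $P$ positive via $P=\Re P+i\,\Im P$ and Proposition~\ref{prop:NCInt.Measurable-Re-Im}-type splitting (noting $\Re P,\Im P\in\Psi^{-n}$), then for positive $P\in\Psi^{-n}$ use the zeta function $\Tr(P^s)$ which again has a simple pole at $s=1$ with residue $\tfrac1n\Res(P)$ by the Wodzicki/Guillemin residue formula, yielding \eqref{eq:NCInt.super-Tauberian}.

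Finally, invoking part (3) of Proposition~\ref{prop:NCInt.strong-measurable} with $L=\tfrac1n\Res(P)$ gives that $P$ is strongly measurable and $\bint P=\tfrac1n\Res(P)$, which is \eqref{eq:Connes-trace-thm}. The main obstacle is the sharp remainder estimate: getting $\op{O}(1)$ rather than merely $\op{o}(\log N)$ in the eigenvalue sum for a general (non-selfadjoint, non-model) \psido\ of order $-n$. For the positive model operator this comes from the two-term short-time heat expansion; propagating it to all of $\Psi^{-n}(M,E)$ is most safely done by the algebraic route --- reducing modulo $\Com(\sL_{1,\infty})$ and modulo $(\sL_{1,\infty})_0=\Psi^{<-n}$-closure --- using Proposition~\ref{prop:NCint.Com-DFWWW} and Lemma~\ref{lem:NCInt.additivity}, so that the only analytic input needed is the residue of a single zeta function. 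Care is required because eigenvalues of non-normal operators are not singular values, so the passage from $\Re P$, $\Im P$ back to $P$ must go through the additivity lemma applied to eigenvalue sequences rather than through singular-value bounds.
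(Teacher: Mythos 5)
Your overall strategy---establish the ``strongly Tauberian'' estimate $\sum_{j<N}\lambda_j(P)=\frac1n\Res(P)\log N+\op{O}(1)$ and then invoke part (3) of Proposition~\ref{prop:NCInt.strong-measurable}---is a legitimate route (it is essentially the Kalton--Lord--Potapov--Sukochev route; note the paper itself does not reprove this proposition, but rather derives a stronger form of it in Theorem~\ref{thm:Weyl.strong-CTT} from Birman--Solomyak's Weyl laws, where the passage from model operators to general $P$ is done by the perturbation theory of Proposition~\ref{prop:Bir-Sol.Perturbation} and Corollary~\ref{cor:Bir-Sol.closedness} rather than by commutator identities). However, your execution of the key step has genuine gaps. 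First, the middle paragraph is circular: by Proposition~\ref{prop:NCint.Com-DFWWW}, the assertion that ``$P-\frac1n\Res(P)\,T_0$-type comparisons land in $\Com(\sL_{1,\infty})$ once the log-divergent parts agree'' \emph{is} the strongly Tauberian estimate you are trying to prove; agreement of log-divergent parts only gives an $\op{o}(\log N)$ statement, and knowing $f\Delta^{-n/2}-\Delta^{-n/2}f\in\Psi^{-n-1}$ says nothing about the eigenvalue sums of $f\Delta^{-n/2}$ itself. Second, the ``clean'' reduction fails on two counts: writing $P=\Re P+i\,\Im P$ reduces only to \emph{selfadjoint} operators, not positive ones, and the positive/negative parts of a selfadjoint \psido\ of order $-n$ are in general not \psidos\ (this is exactly the subtlety the paper emphasizes), so you never actually reach the positive case. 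Third, even for a positive $P\in\Psi^{-n}$ the invocation of ``the zeta function $\Tr(P^s)$ with a simple pole at $s=1$'' is not available from the cited residue results: Proposition~\ref{prop:Weyl.NCR-zeta} (Guillemin--Wodzicki) requires an \emph{elliptic} operator $Q$, whereas a positive \psido\ of order $-n$ (e.g.\ $\Delta^{-n/4}f\Delta^{-n/4}$ with $f\geq0$ vanishing somewhere) need not be elliptic, and $P^s$ is then not a \psido; establishing that pole in the non-elliptic case is essentially the hard content of the Birman--Solomyak/KLPS analysis, not a quotable formula.

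Finally, even granting the pole, a simple pole of $\Tr(P^s)$ at $s=1$ combined with a Tauberian theorem only yields $\lambda_j(P)\sim Lj^{-1}$, i.e.\ $\sum_{j<N}\lambda_j(P)=L\log N+\op{o}(\log N)$; it does \emph{not} give the $\op{O}(1)$ remainder required by \eqref{eq:NCInt.super-Tauberian}. The $\op{O}(1)$ bound needs a sharp (two-term) Weyl law, which you correctly gesture at for the model operator $\Delta^{-n/2}$ but then drop in the general argument. A repairable version of your plan would be: for selfadjoint $P$, choose $c>0$ so large that $P+c\Delta^{-n/2}$ has strictly positive principal symbol (hence is elliptic), correct it by a G\aa rding-type term of order $\leq -(n{+}1)$ (which is trace class, hence harmless for \eqref{eq:NCInt.super-Tauberian})) to make it nonnegative, prove the $\op{O}(1)$ estimate for positive \emph{elliptic} operators of order $-n$ via the sharp Weyl law (or H\"ormander remainder) for their inverses, and then recover $P$ by Lemma~\ref{lem:NCInt.additivity}; alternatively, follow the paper's Section~\ref{sec:Bir-Sol-PDOs} route, which avoids $\op{O}(1)$ remainders altogether by working with Weyl operators and Proposition~\ref{prop:Bir-Sol.Weyl-mesurable}. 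As written, though, the passage from the model operator to a general $P\in\Psi^{-n}(M,E)$ is not established.
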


\begin{remark}
 Connes~\cite{Co:CMP88} established measurablity and derived the trace formula~\ref{eq:Connes-trace-thm}.  
 Kalton-Lord-Potapov-Sukochev~\cite{KLPS:AIM13} obtained strong measurability.
 We observe that Connes' arguments can also be used to get strong measurability. 
\end{remark}

Suppose that $E$ is the trivial line bundle, and let $\Delta_g$ be the Laplace-Beltrami operator on functions. As an application of Connes' trace theorem we obtain the following integration formula, which shows that the noncommutative integral recaptures the Riemannian volume density. 

\begin{proposition}[Connes's Integration Formula~\cite{Co:CMP88, KLPS:AIM13}]\label{prop:NCInt.Connes-int-formula} 
 For all $f\in C^\infty(M)$, the operator $f \Delta_g^{-\frac{n}{2}}$ is strongly measurable, and we have
\begin{equation}
 \bint \Delta_g^{-\frac{n}{4}} f \Delta_g^{-\frac{n}{4}} = c_n \int_M f(x) \sqrt{\det\left(g(x)\right)}d^nx, \qquad c_n:=\frac{1}{n} (2\pi)^{-n}|\bS^{n-1}|. 
 \label{eq:NCInt.Connes-int-formula}
\end{equation}
\end{proposition}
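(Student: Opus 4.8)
The plan is to read off the integration formula from Connes' trace theorem (Proposition~\ref{prop:NCInt.Connes-trace-thm}) after identifying the operators in question as \psidos\ of order $-n$ and evaluating the resulting noncommutative residue. By Seeley's construction of complex powers, $\Delta_g^{-n/2}$ (defined to be $0$ on the kernel of $\Delta_g$, which only modifies it by a smoothing operator) is a classical \psido\ of order $-n$ with principal symbol $\sigma(\Delta_g^{-n/2})(x,\xi)=|\xi|_g^{-n}$, where $|\xi|_g^2=g^{ij}(x)\xi_i\xi_j$. Multiplying on the left by $f\in C^\infty(M)$, or sandwiching by $\Delta_g^{-n/4}$ on both sides, keeps us in $\Psi^{-n}(M)$; by multiplicativity of the principal symbol, both $f\Delta_g^{-n/2}$ and $\Delta_g^{-n/4}f\Delta_g^{-n/4}$ have principal symbol $f(x)|\xi|_g^{-n}$. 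Hence Proposition~\ref{prop:NCInt.Connes-trace-thm} applies: both operators are strongly measurable, and since the NC integral of an order-$(-n)$ operator equals $\frac1n\Res$, which depends only on the principal symbol, it suffices to compute $\Res(\Delta_g^{-n/4}f\Delta_g^{-n/4})$.

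Using the local formula for the residue recalled just before Proposition~\ref{prop:NCInt.Connes-trace-thm}, with the symbol of degree $-n$ equal to $f(x)\big(g^{ij}(x)\xi_i\xi_j\big)^{-n/2}$, I obtain
\begin{equation*}
 \Res\big(\Delta_g^{-\frac n4}f\Delta_g^{-\frac n4}\big)=(2\pi)^{-n}\int_M f(x)\bigg(\int_{|\xi|=1}\big(g^{ij}(x)\xi_i\xi_j\big)^{-\frac n2}\,d^{n-1}\xi\bigg)\,d^nx.
\end{equation*}
The inner fibrewise integral is handled by the elementary identity $\int_{|\xi|=1}\langle A\xi,\xi\rangle^{-n/2}\,d^{n-1}\xi=|\bS^{n-1}|\,(\det A)^{-1/2}$, valid for any positive definite symmetric matrix $A$, which itself follows by writing the Gaussian integral $\int_{\R^n}e^{-\langle A\xi,\xi\rangle}d^n\xi=\pi^{n/2}(\det A)^{-1/2}$ in polar coordinates and using $|\bS^{n-1}|=2\pi^{n/2}/\Gamma(n/2)$. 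Taking $A=g(x)^{-1}=(g^{ij}(x))$, so that $(\det A)^{-1/2}=\sqrt{\det g(x)}$, gives $\Res(\Delta_g^{-n/4}f\Delta_g^{-n/4})=(2\pi)^{-n}|\bS^{n-1}|\int_M f(x)\sqrt{\det g(x)}\,d^nx$; dividing by $n$ produces exactly~(\ref{eq:NCInt.Connes-int-formula}) with $c_n=\frac1n(2\pi)^{-n}|\bS^{n-1}|$.

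I do not expect a genuine obstacle here: the argument is essentially a one-line consequence of Connes' trace theorem together with the polar-coordinates evaluation of a Gaussian integral. The only points needing some care are bookkeeping in nature --- that $\Delta_g^{-n/2}$ is a bona fide classical \psido\ of order $-n$ (Seeley), that discarding the contribution of the kernel of $\Delta_g$ is harmless since finite-rank operators lie in $(\sL_{1,\infty})_0$ and are thus strongly measurable with vanishing NC integral, and tracking the powers of $2\pi$ so as to match the stated constant $c_n$.
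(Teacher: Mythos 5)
Your proposal is correct and follows exactly the route the paper intends: apply Connes' trace theorem (Proposition~\ref{prop:NCInt.Connes-trace-thm}) to the order~$-n$ \psidos\ $f\Delta_g^{-n/2}$ and $\Delta_g^{-n/4}f\Delta_g^{-n/4}$, whose principal symbol is $f(x)|\xi|_g^{-n}$, and evaluate the noncommutative residue by the Gaussian/polar-coordinates identity, which yields $(2\pi)^{-n}|\bS^{n-1}|\int_M f\sqrt{\det g}\,d^nx$ and hence the stated constant $c_n$. The bookkeeping points you flag (Seeley's complex powers, discarding the finite-dimensional kernel of $\Delta_g$) are handled appropriately.
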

\begin{remark}\label{rmk:Strong.Orlicz} 
 The integration formula~(\ref{eq:NCInt.Connes-int-formula}) fails in general for functions in $L_1(M)$ (see~\cite{KLPS:AIM13}). However, as shown by 
 Rozenblum~\cite{Ro:arXiv21} and Sukochev-Zanin~\cite{SZ:arXiv21} (see also~\cite{Po:Weyl-Orlicz}) it actually holds for any function in the Orlicz space $\LlogL(M)$, i.e., measurable functions $f$ on $M$ such that $\int (1+|f|)\log(1+|f|)\sqrt{g} dx<\infty$. In particular, it holds for any $f\in L_p(M)$ with $p>1$ (see also~\cite{KLPS:AIM13, LPS:JFA10, LSZ:JFA20}). In fact, Rozenblum~\cite{Ro:arXiv21} further extends this result to potentials of the form $f=h\mu$, where $h\in \LlogL(M)$ and $\mu$ is in a suitable class of Borel measures (see also~\cite{RS:EMS21}). 
 \end{remark}

\begin{remark}
We refer to~\cite{Po:JFA07} for versions of Connes' trace theorem and Conne's integration formulas for Heisenberg pseudodifferential operators on contact manifolds and Cauchy-Riemann manifolds. We also refer to~\cite{MP:Part1, MP:Part2, Po:JMP20} for extensions of Connes' trace theorem and Connes's integration formula to noncommutative tori. In addition,  versions of Connes' integration formula for noncommutative Euclidean spaces and $\op{SU}(2)$ are given in~\cite{MSZ:MA19}.
\end{remark}

\section{Weyl's Laws and Noncommutative Integration}\label{sec:Weyl} 
In this section, we relate Connes' integration to the Weyl's laws for compact operators studied by Birman-Solomyak~\cite{BS:JFAA70} and others in the late 60s and early 70s. In particular, this will exhibit an even stronger notion of measurability of purely spectral nature, and so this will provide another spectral theoretic interpretation of Connes' integral. 

\subsection{Weyl operators} 
If $A$ is a selfadjoint compact operator, then as in Remark~\ref{rmk:NCInt.self-eig} we denote by $(\pm\lambda^\pm(A))_{j\geq 0}$ its sequences of positive and negative eigenvalues, i.e., $\lambda_j^\pm(A)=\lambda_j(A^\pm)=\mu_j(A^{\pm})$, where $A^\pm=\frac12(|A|\pm A)$ are the positive and negative parts of $A$. 
 We refer to~\cite[\S9.2]{BS:Book} for the main properties of the positive/negative eigenvalue sequences of selfadjoint compact operators. In particular, we have the following min-max principle (\emph{cf}.~\cite[Theorem 9.2.4]{BS:Book}), 
\begin{equation*}
 \lambda^\pm_j(A)= \min \bigg\{ \max_{0\neq \xi\in E^\perp} \pm \frac{\scal{A\xi}{\xi}}{\scal\xi\xi}; \  \dim E=j\bigg\}, \qquad j\geq 0. 
\end{equation*}
This implies the following version of Ky Fan's inequality (\emph{cf}.~\cite[Theorem 9.2.8]{BS:Book}), 
\begin{equation}
 \lambda^\pm_{j+k}(A+B)\leq \lambda^\pm_j(A) + \lambda^\pm_k(B), \qquad j,k\geq 0. 
 \label{eq:Bir-Sol.Weyl-ineq-lambdapm}
\end{equation}

\begin{definition}
We say that $A\in \sL_{p,\infty}$, $p>0$, is a \emph{Weyl operator} if one of the following conditions applies:
\begin{enumerate}
 \item[(i)] $A\geq 0$ and $\lim j^{1/p}\lambda_j(A)$ exists. 
 
 \item[(ii)] $A^*=A$ and $\lim j^{1/p}\lambda_j^+(A)$ and $\lim j^{1/p}\lambda_j^-(A)$ both exist. 
 
 \item[(iii)] The real part $\Re A=\frac12(A+A^*)$ and the imaginary part $\Im A= \frac1{2i}(A-A^*)$ of are both Weyl operators in the sense of (ii). 
\end{enumerate}
\end{definition}

We denote by $\sW_{p,\infty}$ the class of Weyl operators in $\sL_{p,\infty}$.  If $A\in \sW_{p,\infty}$, $A\geq 0$, we set 
\begin{equation*}
 \Lambda(A)=\lim_{j\rightarrow \infty} j^{\frac1p} \lambda_j(A). 
\end{equation*}
If $A=A^*\in \sW_{p,\infty}$, we set 
\begin{equation*}
  \Lambda^\pm(A)=\lim_{j\rightarrow \infty} j^{\frac1p} \lambda^\pm_j(A). 
\end{equation*}
For an arbitrary operator $A\in \sW_{p,\infty}$, we define 
\begin{equation*}
  \Lambda^\pm(A)=  \Lambda^\pm\big(\Re A\big) + i \Lambda^\pm\big(\Im A\big). 
\end{equation*}

\begin{remark}
 If $A=A^*\in \sL_{p,\infty}$, then
\begin{equation}
0\leq \lambda_j^\pm(A)\leq \mu_j(A) \leq (j+1)^{-\frac1{p}}\|A\|_{p,\infty} \qquad \forall j\geq 0. 
\label{eq:Bir-Sol.lambdpm-mu}
\end{equation}
 In particular, the sequences $(j^{1/p}\lambda^{\pm}(A))_{j\geq 0}$ are always bounded. If in addition $A$ is a Weyl operator, then we have
\begin{equation}
 0\leq \Lambda^\pm(A) \leq \|A\|_{p,\infty}. 
 \label{eq:Bir-Sol.Lambda-norm}
\end{equation}
\end{remark}

\begin{remark}\label{rmk:Bir-Sol.lunf0} 
 If $A=A^*\in (\sL_{p,\infty})_0$, then $j^{1/p}\mu_j(A)\rightarrow 0$, and so by using~(\ref{eq:Bir-Sol.lambdpm-mu}) we see that $j^{1/p}\lambda_j^\pm(A)\rightarrow 0$ as well. Thus, $A\in \sW_{p,\infty}$, and $\Lambda^\pm(A)=0$. More generally, by taking real and imaginary parts we see that every operator $A\in (\sL_{p,\infty})_0$ is contained in $\sW_{p,\infty}$ with $\Lambda^\pm(A)=0$. This includes all infinitesimal operators of order~$>p$. 
\end{remark}

\begin{remark}\label{rmk:Bir-Sol.counting}
 Given any selfadjoint compact operator $A$ on $\sH$, its counting functions are given by
\begin{equation*}
 N^\pm(A;\lambda):=\# \big\{j; \ \lambda_j^\pm(A)>\lambda\big\}, \qquad \lambda>0. 
\end{equation*}
If $A\in \sW_{p,\infty}$, $p>0$, then (see, e.g., \cite[Proposition~13.1]{Sh:Springer01}), we have
\begin{equation}
\lim_{\lambda \rightarrow 0^+} \lambda^pN^\pm(A;\lambda)= \lim_{j\rightarrow \infty} j (\lambda_j^\pm(A))^p= \Lambda^\pm(A)^p. 
\label{eq:Bir-Sol.counting-Lambda}
\end{equation}
\end{remark}

In addition, it will be convenient to introduce the following class of operators. 

\begin{definition}
 $\sW_{|p,\infty|}$, $p>0$, consists of operators $A\in \sL_{p,\infty}$ such that $|A|\in \sW_{p,\infty}$, i.e., $\lim j^{1/p}\mu_j(A)$ exists. 
\end{definition}

In particular, if $A\in \sW_{|p,\infty|}$, then 
\begin{equation*}
 \Lambda\big(|A|\big)= \lim_{j\rightarrow \infty} j^{\frac1p}\mu_j(A). 
\end{equation*}

\subsection{Birman-Solomyak's perturbation theory} 
We recall the main facts regarding the perturbation theory of Birman-Solomyak~\cite[\S4]{BS:JFAA70}. 

\begin{proposition}[Birman-Solomyak~{\cite[Theorem 4.1]{BS:JFAA70}}] \label{prop:Bir-Sol.Perturbation}
Let $A=A^*\in \sL_{p,\infty}$. Assume that, for every $\epsilon>0$, we may write 
\begin{equation*}
 A=A_\epsilon'+A_\epsilon'', 
\end{equation*}
where $A'_\epsilon$ and $A''_\epsilon$ are selfadjoint operators in $\sL_{p,\infty}$ such that $A'_\epsilon \in \sW_{p,\infty}$, and 
\begin{equation*}
 \limsup j^{\frac1p} \lambda^\pm_j(A''_\epsilon)\leq \epsilon. 
\end{equation*}
 Then $A\in \sW_{p,\infty}$, and we have
\begin{equation*}
 \lim_{\epsilon \rightarrow 0^+} \Lambda^\pm\big(A'_\epsilon\big)= \Lambda^{\pm}(A). 
\end{equation*}
\end{proposition}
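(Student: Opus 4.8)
The plan is to first establish the two-sided control of the limsup/liminf of $j^{1/p}\lambda_j^\pm(A)$ in terms of $\Lambda^\pm(A'_\epsilon)$, and then let $\epsilon\to 0^+$. Fix $\epsilon>0$ and write $A=A'_\epsilon+A''_\epsilon$. For the upper bound, I would apply the Ky Fan type inequality~(\ref{eq:Bir-Sol.Weyl-ineq-lambdapm}) in the form $\lambda^\pm_{j+k}(A)\leq \lambda^\pm_j(A'_\epsilon)+\lambda^\pm_k(A''_\epsilon)$. Taking $k=\lfloor \delta j\rfloor$ for a small parameter $\delta>0$ and multiplying by $j^{1/p}$, one gets
\begin{equation*}
 (j+k)^{\frac1p}\lambda^\pm_{j+k}(A) \leq \Big(\tfrac{j+k}{j}\Big)^{\frac1p} j^{\frac1p}\lambda^\pm_j(A'_\epsilon) + \Big(\tfrac{j+k}{k}\Big)^{\frac1p} k^{\frac1p}\lambda^\pm_k(A''_\epsilon).
\end{equation*}
Letting $j\to\infty$, the first term tends to $(1+\delta)^{1/p}\Lambda^\pm(A'_\epsilon)$ since $A'_\epsilon\in\sW_{p,\infty}$, and the limsup of the second term is at most $(1+\delta^{-1})^{1/p}\epsilon$ by hypothesis on $A''_\epsilon$. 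Since $(j+k)/j\to 1+\delta$, this gives
\begin{equation*}
 \limsup_{j\to\infty} j^{\frac1p}\lambda^\pm_j(A) \leq \Lambda^\pm(A'_\epsilon) + C_\delta\,\epsilon,
\end{equation*}
and letting $\delta\to 0$ (carefully, since the constant in front of $\Lambda^\pm(A'_\epsilon)$ also depends on $\delta$ but tends to $1$) yields $\limsup j^{1/p}\lambda^\pm_j(A)\leq \Lambda^\pm(A'_\epsilon)+\epsilon$ or a similar bound with a harmless multiplicative correction. For the lower bound I would run the symmetric argument writing $A'_\epsilon=A+(-A''_\epsilon)$ and using $\lambda^\pm_{j+k}(A'_\epsilon)\leq \lambda^\pm_j(A)+\lambda^\pm_k(-A''_\epsilon)$, noting that $\lambda^\pm_k(-A''_\epsilon)=\lambda^\mp_k(A''_\epsilon)$ also has limsup at most $\epsilon$ after multiplication by $k^{1/p}$; this gives $\Lambda^\pm(A'_\epsilon)\leq \liminf j^{1/p}\lambda^\pm_j(A)+C_\delta\epsilon$.

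Combining the two bounds, for every $\epsilon>0$ we obtain
\begin{equation*}
 \limsup_{j\to\infty} j^{\frac1p}\lambda^\pm_j(A) - \liminf_{j\to\infty} j^{\frac1p}\lambda^\pm_j(A) \leq C\epsilon
\end{equation*}
for a fixed constant $C$, so letting $\epsilon\to0^+$ forces the limit $\lim j^{1/p}\lambda^\pm_j(A)$ to exist; that is, $A\in\sW_{p,\infty}$. Moreover the same inequalities show $|\Lambda^\pm(A'_\epsilon)-\Lambda^\pm(A)|\leq C\epsilon$, which is exactly the asserted convergence $\Lambda^\pm(A'_\epsilon)\to\Lambda^\pm(A)$ as $\epsilon\to 0^+$. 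One should also record that the sequences $j^{1/p}\lambda^\pm_j(A)$ are bounded by~(\ref{eq:Bir-Sol.lambdpm-mu}), so all the limsup/liminf quantities above are finite and the manipulations are legitimate.

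The main obstacle is the bookkeeping in the upper bound: the index shift from $j$ to $j+k$ means that to extract a statement about $\limsup j^{1/p}\lambda^\pm_j(A)$ one must simultaneously control the ratio $(j+k)/j\to 1+\delta$ and then take $\delta\to 0$, while checking that the $\delta$-dependent constants degrade in a controlled way (the coefficient of $\Lambda^\pm(A'_\epsilon)$ tends to $1$, the coefficient of $\epsilon$ blows up like $\delta^{-1/p}$ but $\epsilon$ is still free to go to zero afterward). A clean way to organize this is to first fix $\epsilon$, choose $\delta$, take $j\to\infty$, then $\delta\to 0$, obtaining a bound depending only on $\epsilon$; only at the very end does one send $\epsilon\to 0$. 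The rest is a routine application of~(\ref{eq:Bir-Sol.Weyl-ineq-lambdapm}) together with the elementary fact that for sequences $a_j\to a$ and $b_j$ with $\limsup b_j\leq b$ one has $\limsup(a_j+b_j)\leq a+b$.
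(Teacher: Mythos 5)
The paper does not actually prove this proposition: it is imported from Birman--Solomyak~\cite{BS:JFAA70} (Theorem~4.1 there), with only the remark that it follows solely from the Ky Fan inequality~(\ref{eq:Bir-Sol.Weyl-ineq-lambdapm}). Your reconstruction goes exactly along that intended route --- split the index as $j+k$ with $k=\lfloor\delta j\rfloor$, apply~(\ref{eq:Bir-Sol.Weyl-ineq-lambdapm}) in both directions, use $\lambda_k^\pm(-A''_\epsilon)=\lambda_k^\mp(A''_\epsilon)$, and invoke the boundedness~(\ref{eq:Bir-Sol.lambdpm-mu}) --- and the inequalities you write down before passing to limits are correct (the passage from indices of the form $j+\lfloor\delta j\rfloor$ to all indices is indeed routine by monotonicity of $j\mapsto\lambda^\pm_j$).

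Where the write-up does not hold together as stated is the handling of the two small parameters. For fixed $\epsilon$, letting $\delta\to0$ in
\begin{equation*}
\limsup_{j\to\infty} j^{\frac1p}\lambda^\pm_j(A)\ \le\ (1+\delta)^{\frac1p}\,\Lambda^\pm(A'_\epsilon)+(1+\delta^{-1})^{\frac1p}\,\epsilon
\end{equation*}
does not yield $\Lambda^\pm(A'_\epsilon)+\epsilon$, since the coefficient of $\epsilon$ blows up like $\delta^{-1/p}$; so the prescription ``fix $\epsilon$, take $j\to\infty$, then $\delta\to0$, and only at the end $\epsilon\to0$'' is the wrong order of limits. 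Likewise, combining the two one-sided bounds does not give $\limsup-\liminf\le C\epsilon$ with a fixed constant: keeping the factors you dropped (the lower bound also carries a factor $(1+\delta)^{1/p}$ in front of the $\liminf$, not just an additive $C_\delta\epsilon$), what you actually get is
\begin{equation*}
\limsup_{j\to\infty} j^{\frac1p}\lambda^\pm_j(A)\ \le\ (1+\delta)^{\frac2p}\,\liminf_{j\to\infty} j^{\frac1p}\lambda^\pm_j(A)\ +\ C_\delta\,\epsilon .
\end{equation*}
This still suffices, but with the limits taken in the opposite order: for each fixed $\delta$ let $\epsilon\to0^+$ (the $C_\delta\epsilon$ term disappears), then let $\delta\to0$ using that $\limsup_j j^{1/p}\lambda^\pm_j(A)\le\|A\|_{p,\infty}<\infty$ by~(\ref{eq:Bir-Sol.lambdpm-mu}); this gives $\limsup\le\liminf$, i.e.\ $A\in\sW_{p,\infty}$. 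The same two one-sided inequalities, with $\epsilon\to0$ first and $\delta\to0$ second, give $\limsup_{\epsilon\to0}\Lambda^\pm(A'_\epsilon)\le\Lambda^\pm(A)\le\liminf_{\epsilon\to0}\Lambda^\pm(A'_\epsilon)$, which is the asserted convergence. Alternatively you may keep your order by choosing $\delta=\delta(\epsilon)\to0$ with $\delta^{-1/p}\epsilon\to0$, but then you must first justify that $\Lambda^\pm(A'_\epsilon)$ stays bounded as $\epsilon\to0$ --- this follows from your lower-bound inequality with, say, $\delta=1$ --- a point the proposal leaves unaddressed. With this reorganization the argument is complete and is, as far as one can tell, the proof the paper has in mind.
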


We stress that Proposition~\ref{prop:Bir-Sol.Perturbation} is obtained in~\cite{BS:JFAA70} as a sole consequence of the Ky Fan's inequality~(\ref{eq:Bir-Sol.Weyl-ineq-lambdapm}). This result has a number of consequences. 

\begin{corollary}\label{cor:Bir-Sol.closedness} 
 $\sW_{p,\infty}$ is a closed subset of $\sL_{p,\infty}$ on which $\Lambda^\pm:\sW_{p,\infty}\rightarrow \C$ are continuous maps. 
\end{corollary}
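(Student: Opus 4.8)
The plan is to deduce Corollary~\ref{cor:Bir-Sol.closedness} directly from the perturbation result, Proposition~\ref{prop:Bir-Sol.Perturbation}. The two assertions to prove are: first, that $\sW_{p,\infty}$ is closed in $\sL_{p,\infty}$; and second, that $\Lambda^\pm$ extend to continuous maps on $\sW_{p,\infty}$. By the very definition of Weyl operator (clauses (i)--(iii)) and the fact that $A\mapsto \Re A$, $A\mapsto \Im A$ are continuous linear maps on $\sL_{p,\infty}$, it suffices to treat the selfadjoint case: if $(A_k)_{k\ge 0}$ is a sequence of selfadjoint operators in $\sW_{p,\infty}$ converging to $A=A^*\in \sL_{p,\infty}$, then I must show $A\in \sW_{p,\infty}$ and $\Lambda^\pm(A_k)\to \Lambda^\pm(A)$.

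First I would fix $\epsilon>0$. Since $A_k\to A$ in the quasi-norm $\|\cdot\|_{p,\infty}$, choose $k$ large enough that $2^{1/p}\|A-A_k\|_{p,\infty}\le \epsilon$ (using the quasi-triangle inequality~(\ref{eq:Quantized.quasi-norm}) only if needed; a single index suffices). Write $A=A_k+(A-A_k)$, set $A'_\epsilon:=A_k$ and $A''_\epsilon:=A-A_k$. Then $A'_\epsilon=A_k\in\sW_{p,\infty}$ by hypothesis, and both are selfadjoint. For the remainder term, apply the bound~(\ref{eq:Bir-Sol.lambdpm-mu}): $\lambda_j^\pm(A''_\epsilon)\le (j+1)^{-1/p}\|A''_\epsilon\|_{p,\infty}$, hence $j^{1/p}\lambda_j^\pm(A''_\epsilon)\le \|A-A_k\|_{p,\infty}\le \epsilon$ for all $j$ (dropping the harmless $2^{1/p}$; one can simply require $\|A-A_k\|_{p,\infty}\le\epsilon$), so $\limsup_j j^{1/p}\lambda_j^\pm(A''_\epsilon)\le\epsilon$. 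Thus the hypotheses of Proposition~\ref{prop:Bir-Sol.Perturbation} are met for every $\epsilon>0$, giving $A\in\sW_{p,\infty}$ and $\lim_{\epsilon\to 0^+}\Lambda^\pm(A'_\epsilon)=\Lambda^\pm(A)$.

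It remains to convert the last limit into genuine continuity of $\Lambda^\pm$. For this I would invoke the bound~(\ref{eq:Bir-Sol.Lambda-norm}) applied to the selfadjoint Weyl operator $A'_\epsilon-A_\ell$ (the difference of two Weyl operators is again a Weyl operator, since $\sW_{p,\infty}$ is now known to be a closed—hence in particular additive along the perturbation argument—subset; alternatively one applies Proposition~\ref{prop:Bir-Sol.Perturbation} once more to the pair $A_k,A_\ell$ to see that $\Lambda^\pm$ is subadditive-controlled). Concretely, from Ky Fan's inequality~(\ref{eq:Bir-Sol.Weyl-ineq-lambdapm}) one gets $j^{1/p}\lambda^\pm_{2j}(A_k)\le j^{1/p}\lambda^\pm_j(A_\ell)+j^{1/p}\lambda^\pm_j(A_k-A_\ell)$ and the symmetric inequality, whence passing to the limit $j\to\infty$ yields $|\Lambda^\pm(A_k)-\Lambda^\pm(A_\ell)|\le C_p\|A_k-A_\ell\|_{p,\infty}$ for a constant $C_p$ depending only on $p$ (the factor $2^{1/p}$ from reindexing). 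This Lipschitz estimate shows $\Lambda^\pm$ is uniformly continuous on $\sW_{p,\infty}$, so in particular $\Lambda^\pm(A_k)\to\Lambda^\pm(A)$, completing the proof.

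The main obstacle is the second half: the perturbation proposition only gives $\lim_{\epsilon\to 0}\Lambda^\pm(A'_\epsilon)=\Lambda^\pm(A)$ along the particular splitting, not a priori a modulus of continuity. The clean way around this is the elementary Lipschitz bound for $\Lambda^\pm$ coming directly from the refined Ky Fan inequality~(\ref{eq:Bir-Sol.Weyl-ineq-lambdapm}) together with~(\ref{eq:Bir-Sol.lambdpm-mu}); once that estimate is in hand, both closedness and continuity follow at once, and the appeal to Proposition~\ref{prop:Bir-Sol.Perturbation} is only needed to guarantee that the limit operator actually lies in $\sW_{p,\infty}$ (i.e., that the two one-sided Weyl limits exist). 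I would present the Lipschitz estimate first as a standalone sublemma, then deduce closedness and continuity together.
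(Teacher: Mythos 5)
Your first half is exactly the paper's route: reduce to the selfadjoint case, split $A=A_k+(A-A_k)$, use~(\ref{eq:Bir-Sol.lambdpm-mu}) to get $\limsup_j j^{1/p}\lambda_j^\pm(A-A_k)\le\|A-A_k\|_{p,\infty}\le\epsilon$, and invoke Proposition~\ref{prop:Bir-Sol.Perturbation} to conclude $A\in\sW_{p,\infty}$. You also correctly identify the remaining issue: the proposition only gives $\Lambda^\pm(A'_\epsilon)\to\Lambda^\pm(A)$ along the chosen splitting, not continuity of $\Lambda^\pm$ on the whole sequence. But your proposed fix has a genuine gap. The ``Lipschitz sublemma'' does not follow from the computation you describe: Ky Fan's inequality~(\ref{eq:Bir-Sol.Weyl-ineq-lambdapm}) with the index doubling gives $j^{1/p}\lambda^\pm_{2j}(A_k)\le j^{1/p}\lambda^\pm_j(A_\ell)+j^{1/p}\lambda^\pm_j(A_k-A_\ell)$, and letting $j\to\infty$ yields
\begin{equation*}
 2^{-\frac1p}\,\Lambda^\pm(A_k)\ \le\ \Lambda^\pm(A_\ell)+\big\|A_k-A_\ell\big\|_{p,\infty},
\end{equation*}
i.e.\ $\Lambda^\pm(A_k)-\Lambda^\pm(A_\ell)\le\big(2^{1/p}-1\big)\Lambda^\pm(A_\ell)+2^{1/p}\|A_k-A_\ell\|_{p,\infty}$. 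The reindexing factor $2^{1/p}$ contaminates the main term $\Lambda^\pm(A_\ell)$, so no estimate of the form $|\Lambda^\pm(A_k)-\Lambda^\pm(A_\ell)|\le C_p\|A_k-A_\ell\|_{p,\infty}$ comes out, and with it your continuity argument collapses. (This is precisely why Birman--Solomyak's perturbation lemma is formulated with $\epsilon$-splittings and an asymmetric use of Ky Fan, rather than as a Lipschitz bound.) Your parenthetical justification that $A_k-A_\ell$ is again a Weyl operator because $\sW_{p,\infty}$ is ``closed---hence additive'' is also unfounded: the corollary asserts only that $\sW_{p,\infty}$ is a closed \emph{subset}, and it is not a linear subspace; fortunately you do not actually need this, since~(\ref{eq:Bir-Sol.lambdpm-mu}) alone controls $\limsup_j j^{1/p}\lambda^\pm_j(A_k-A_\ell)$.

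The repair is the paper's second ingredient, which is softer than a modulus of continuity: by~(\ref{eq:Bir-Sol.Lambda-norm}) the sequence $\{\Lambda^\pm(A_k)\}$ is bounded; pick any convergent subsequence $\{\Lambda^\pm(A_{k_q})\}$, and for each $\epsilon>0$ choose $q_\epsilon$ with $\|A-A_{k_{q_\epsilon}}\|_{p,\infty}<\epsilon$. Applying Proposition~\ref{prop:Bir-Sol.Perturbation} with $A'_\epsilon=A_{k_{q_\epsilon}}$ identifies the subsequential limit as $\Lambda^\pm(A)$. Thus $\Lambda^\pm(A)$ is the unique limit point of a bounded sequence, so $\Lambda^\pm(A_k)\to\Lambda^\pm(A)$, which gives both closedness and the (sequential) continuity of $\Lambda^\pm$ on $\sW_{p,\infty}$ without any Lipschitz estimate.
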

\begin{proof}
 We only need to show that if $(A_\ell)_{\ell\geq 0}$ is a sequence in $\sW_{p,\infty}$ converging to $A$ in $\sL_{p,\infty}$, then $A\in \sW_{p,\infty}$, and we have 
\begin{equation}
 \lim_{\ell \rightarrow \infty} \Lambda^\pm(A_\ell)=\Lambda^\pm(A).
 \label{eq:Bir-Sol.lim-Lambdapm}  
\end{equation}
By taking real and imaginary parts we may assume that the operators $A_\ell$ and $A$ are selfadjoint. 

Thanks to~(\ref{eq:Bir-Sol.Lambda-norm}) we have $\sup_{\ell \geq 0} \Lambda^{\pm}(A_\ell)\leq \sup_{\ell \geq 0}\|A_\ell\|_{p,\infty} <\infty$. That is, $\{\Lambda^{\pm}(A_\ell)\}_{\ell\geq 0}$ are bounded sequences. Let $\{\Lambda^{\pm}(A_{\ell_p})\}_{p\geq 0}$ be convergent subsequences. As $A_{\ell_p}\rightarrow A$ in $\sL_{p,\infty}$, given any $\epsilon >0$, there is $p_{\epsilon}>\epsilon^{-1}$  such that $\|A-A_{\ell_{p_{\epsilon}}}\|<\epsilon$. In view of~(\ref{eq:Bir-Sol.lambdpm-mu}) this implies that 
\begin{equation*}
 \limsup j^{\frac1p} \lambda_j^\pm\big(A-A_{\ell_{p_{\epsilon}}}\big) \leq \big\|A-A_{\ell_{p_{\epsilon}}}\big\|<\epsilon.
\end{equation*}
As $A_{\ell_{p_{\epsilon}}}\in \sW_{p,\infty}$ for all $\epsilon >0$, it follows from Proposition~\ref{prop:Bir-Sol.Perturbation} that $A\in \sW_{p,\infty}$ and we have
\begin{equation*}
 \Lambda(A)=\lim_{\epsilon \rightarrow 0^+} \Lambda^\pm\big(A_{\ell_{p_{\epsilon}}}\big)=\lim_{p\rightarrow \infty}  \Lambda^\pm\big(A_{\ell_p}\big).
\end{equation*}
This shows that $\Lambda^\pm(A)$ are the unique limit points of the bounded sequences  $\{\Lambda^{\pm}(A_\ell)\}_{\ell\geq 0}$. This gives~(\ref{eq:Bir-Sol.lim-Lambdapm}). The proof is complete. 
\end{proof}

\begin{corollary}[K.~Fan~{\cite[Theorem 3]{Fa:PNAS51}}; see also~{\cite[Theorem II.2.3]{GK:AMS69}}] \label{cor:Bir-Sol.lunfo-perturbation} 
 Let $A\in \sW_{p,\infty}$ and $B\in (\sL_{p,\infty})_0$. Then $A+B\in \sW_{p,\infty}$, and we have 
\begin{equation*}
  \Lambda^\pm(A+B)=\Lambda^\pm(A). 
\end{equation*}
\end{corollary}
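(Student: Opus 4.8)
The plan is to derive Corollary~\ref{cor:Bir-Sol.lunfo-perturbation} directly from the perturbation theorem, Proposition~\ref{prop:Bir-Sol.Perturbation}, by using $B$ itself as the "small" piece of the splitting and $A$ as the "good" piece. First I would reduce to the selfadjoint case: by taking real and imaginary parts (using that $(\sL_{p,\infty})_0$ is stable under $T\mapsto \frac12(T+T^*)$ and $T\mapsto \frac1{2i}(T-T^*)$, and that $\Re(A+B)=\Re A+\Re B$ with $\Re B\in (\sL_{p,\infty})_0$), and invoking the definition of $\Lambda^\pm$ on general Weyl operators, it suffices to treat $A=A^*\in\sW_{p,\infty}$ and $B=B^*\in(\sL_{p,\infty})_0$.

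For the selfadjoint case, the key observation is that $B\in(\sL_{p,\infty})_0$ means $j^{1/p}\mu_j(B)\to 0$, hence by~(\ref{eq:Bir-Sol.lambdpm-mu}) we also have $j^{1/p}\lambda_j^\pm(B)\to 0$ (this is exactly Remark~\ref{rmk:Bir-Sol.lunf0}). So for every $\epsilon>0$ we may write the trivial splitting $A+B = A + B$ with $A'_\epsilon := A \in \sW_{p,\infty}$ (independent of $\epsilon$) and $A''_\epsilon := B$, which satisfies $\limsup j^{1/p}\lambda_j^\pm(B) = 0 \leq \epsilon$. Proposition~\ref{prop:Bir-Sol.Perturbation} then immediately yields $A+B\in\sW_{p,\infty}$ and
\[
 \Lambda^\pm(A+B) = \lim_{\epsilon\to 0^+} \Lambda^\pm(A'_\epsilon) = \Lambda^\pm(A),
\]
since $A'_\epsilon=A$ for all $\epsilon$. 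This completes the selfadjoint case, and combined with the real/imaginary part reduction it finishes the proof.

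There is essentially no obstacle here: the corollary is a one-line application of the already-established perturbation principle, the only mild care being the bookkeeping in the reduction to selfadjoint operators and confirming that the constant splitting is admissible in the hypotheses of Proposition~\ref{prop:Bir-Sol.Perturbation}. I would double-check that Proposition~\ref{prop:Bir-Sol.Perturbation} does not secretly require $A'_\epsilon$ to vary with $\epsilon$ — it does not, since a constant family is a perfectly valid choice — so the argument goes through verbatim. If one prefers to avoid even invoking the full perturbation theorem, an alternative is a direct argument from the Ky Fan inequality~(\ref{eq:Bir-Sol.Weyl-ineq-lambdapm}): writing $A = (A+B) + (-B)$ and $A+B = A + B$, one gets for any split $j+k$ the two-sided bound $\lambda^\pm_{j+k}(A+B)\le \lambda^\pm_j(A)+\lambda^\pm_k(B)$ and $\lambda^\pm_{j+k}(A)\le \lambda^\pm_j(A+B)+\lambda^\pm_k(-B)$; choosing $k=\lfloor \delta j\rfloor$ and letting $j\to\infty$ then $\delta\to0$, and using $k^{1/p}\lambda^\pm_k(\pm B)\to 0$, pins down $\lim j^{1/p}\lambda^\pm_j(A+B)=\Lambda^\pm(A)$. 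But the cleanest exposition is the first route, so that is what I would write.
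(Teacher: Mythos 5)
Your proposal is correct and is essentially identical to the paper's own proof: reduce to the selfadjoint case, use Remark~\ref{rmk:Bir-Sol.lunf0} to get $j^{1/p}\lambda_j^\pm(B)\to 0$, and apply Proposition~\ref{prop:Bir-Sol.Perturbation} with the constant splitting $A'_\epsilon=A$, $A''_\epsilon=B$. The extra Ky Fan variant you sketch is a fine alternative but not needed.
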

\begin{proof}
 It is enough to prove the result when $A$ and $B$ are selfadjoint. In this case we know from Remark~\ref{rmk:Bir-Sol.lunf0} that $\lim j^{1/p}\lambda_j^\pm(B)=0$. Therefore, the 
we get the result by applying Proposition~\ref{prop:Bir-Sol.Perturbation} to $A+B$ with $A'_\epsilon=A$ and $A''_\epsilon=B$ for all $\epsilon>0$. 
\end{proof}

As mentioned above Proposition~\ref{prop:Bir-Sol.Perturbation} is a sole consequence of the Ky Fan's inequality~(\ref{eq:Bir-Sol.Weyl-ineq-lambdapm}). As the singular values satisfy the Ky Fan's inequality~(\ref{eq:Quantized.properties-mun2}), we similarly have a version of Proposition~\ref{prop:Bir-Sol.Perturbation} for singular values (\emph{cf}.~\cite[Remark 4.2]{BS:JFAA70}). 
Namely, we have the following result. 

\begin{proposition}[Birman-Solomyak~\cite{BS:JFAA70}] Let $A\in \sL_{p,\infty}$. Assume that, for every $\epsilon>0$, we may write 
\begin{equation*}
 A=A_\epsilon'+A_\epsilon'', 
\end{equation*}
with $A'_\epsilon \in \sW_{|p,\infty|}$ and $A''_\epsilon \in \sL_{p,\infty}$ such that
\begin{equation*}
 \limsup j^{\frac1p} \mu_j(A''_\epsilon)\leq \epsilon. 
\end{equation*}
 Then $A\in \sW_{|p,\infty|}$, and we have
\begin{equation*}
 \lim_{\epsilon \rightarrow 0^+} \Lambda\big(|A'_\epsilon|\big)= \Lambda(|A|). 
\end{equation*}
\end{proposition}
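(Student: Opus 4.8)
The plan is to mimic the proof of Proposition~\ref{prop:Bir-Sol.Perturbation}, replacing the signed eigenvalue sequences $(\lambda^\pm_j(\cdot))$ by the singular value sequence $(\mu_j(\cdot))$ throughout and using Ky Fan's inequality~(\ref{eq:Quantized.properties-mun2}) in place of~(\ref{eq:Bir-Sol.Weyl-ineq-lambdapm}); this is legitimate precisely because the argument for the selfadjoint version uses only the Ky Fan structure. First I would set $L^{*}:=\limsup_{j\to\infty} j^{1/p}\mu_j(A)$ and $L_{*}:=\liminf_{j\to\infty} j^{1/p}\mu_j(A)$, noting that the definition~(\ref{def:lp_infty_quasinorm}) of the $\sL_{p,\infty}$-quasi-norm gives $j^{1/p}\mu_j(A)\le\|A\|_{p,\infty}$, so $0\le L_{*}\le L^{*}<\infty$. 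The goal is then to show $L_{*}=L^{*}$ --- which is exactly the assertion $A\in\sW_{|p,\infty|}$ --- and that this common value equals $\lim_{\epsilon\to0^+}\Lambda(|A'_\epsilon|)=\Lambda(|A|)$.

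For the upper bound I would fix $\epsilon>0$ and $\delta\in(0,1)$, and for large $j$ write $j=k+l$ with $k=\lfloor(1-\delta)j\rfloor$, so that $k/j\to 1-\delta$ and $l/j\to\delta$. Ky Fan's inequality~(\ref{eq:Quantized.properties-mun2}) applied to $A=A'_\epsilon+A''_\epsilon$ gives
\begin{equation*}
 j^{1/p}\mu_j(A)\le\Big(\frac{j}{k}\Big)^{1/p} k^{1/p}\mu_k(A'_\epsilon)+\Big(\frac{j}{l}\Big)^{1/p} l^{1/p}\mu_l(A''_\epsilon),
\end{equation*}
and letting $j\to\infty$, using $A'_\epsilon\in\sW_{|p,\infty|}$ and $\limsup_l l^{1/p}\mu_l(A''_\epsilon)\le\epsilon$, yields $L^{*}\le(1-\delta)^{-1/p}\Lambda(|A'_\epsilon|)+\delta^{-1/p}\epsilon$. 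For the lower bound I would use $A'_\epsilon=A+(-A''_\epsilon)$ together with $\mu_l(-A''_\epsilon)=\mu_l(A''_\epsilon)$ and apply~(\ref{eq:Quantized.properties-mun2}) in the form $\mu_{j+l}(A'_\epsilon)\le\mu_j(A)+\mu_l(A''_\epsilon)$ with $l=\lfloor\delta j\rfloor$; letting $j\to\infty$ gives $L_{*}\ge(1+\delta)^{-1/p}\Lambda(|A'_\epsilon|)-\delta^{-1/p}\epsilon$. Taking $\delta=1$ here already shows $\Lambda(|A'_\epsilon|)\le 2^{1/p}(L_{*}+\epsilon)$, so $\Lambda(|A'_\epsilon|)$ stays bounded as $\epsilon\to0^+$; in particular $\liminf_{\epsilon\to0^+}\Lambda(|A'_\epsilon|)$ and $\limsup_{\epsilon\to0^+}\Lambda(|A'_\epsilon|)$ are finite.

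To finish I would couple the two parameters, choosing $\delta=\delta(\epsilon)$ with $\delta(\epsilon)\to0$ and $\epsilon\,\delta(\epsilon)^{-1/p}\to0$ as $\epsilon\to0^+$ (for instance $\delta(\epsilon)=\epsilon^{p/2}$). Since the factors $(1\mp\delta)^{-1/p}$ then tend to $1$ while $\Lambda(|A'_\epsilon|)$ remains bounded, the upper bound forces $L^{*}\le\liminf_{\epsilon\to0^+}\Lambda(|A'_\epsilon|)$ and the lower bound forces $L_{*}\ge\limsup_{\epsilon\to0^+}\Lambda(|A'_\epsilon|)$; combined with $L_{*}\le L^{*}$, all of $L_{*}$, $L^{*}$ and the two one-sided limits of $\Lambda(|A'_\epsilon|)$ coincide. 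Hence $\lim_{j\to\infty}j^{1/p}\mu_j(A)$ exists, i.e.\ $A\in\sW_{|p,\infty|}$, and $\Lambda(|A|)=\lim_{\epsilon\to0^+}\Lambda(|A'_\epsilon|)$.

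I do not anticipate a genuine obstacle: the content is purely a rescaling argument built on Ky Fan's inequality, exactly as in the selfadjoint case. The only point demanding a little care is the simultaneous passage to the limit in the splitting parameter $\delta$ and the perturbation parameter $\epsilon$, which is why one must first record the a priori bound $\Lambda(|A'_\epsilon|)=\op{O}(1)$ (as $\epsilon\to0^+$) before letting $\epsilon\to0^+$ in the coupled estimate.
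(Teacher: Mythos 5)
Your proof is correct, and it is essentially the approach the paper itself points to: the paper does not write out a proof of this proposition but cites Birman--Solomyak and remarks that the argument for Proposition~\ref{prop:Bir-Sol.Perturbation} transfers verbatim because singular values satisfy Ky Fan's inequality~(\ref{eq:Quantized.properties-mun2}), and your rescaling argument (splitting $j=k+l$ with $k\sim(1-\delta)j$, the reverse estimate via $A'_\epsilon=A-A''_\epsilon$, the a priori bound on $\Lambda(|A'_\epsilon|)$, and the coupling $\delta=\delta(\epsilon)$) is precisely that transferred argument carried out in full.
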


By arguing along the same lines as that of the proofs of Corollary~\ref{cor:Bir-Sol.closedness} and Corollary~\ref{cor:Bir-Sol.lunfo-perturbation} we arrive at the following statements. 

\begin{corollary}\label{cor:Bir-Sol.closedness-sing}
$ \sW_{|p,\infty|}$ is a closed subset of $\sL_{p,\infty}$ on which the functional $A \rightarrow \Lambda(|A|)$ is continuous. 
\end{corollary}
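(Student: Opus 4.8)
The plan is to transcribe the proof of Corollary~\ref{cor:Bir-Sol.closedness} to the singular-value setting, replacing Proposition~\ref{prop:Bir-Sol.Perturbation} by its singular-value counterpart stated just above, Ky Fan's inequality~(\ref{eq:Bir-Sol.Weyl-ineq-lambdapm}) by Ky Fan's inequality~(\ref{eq:Quantized.properties-mun2}), and the bound~(\ref{eq:Bir-Sol.lambdpm-mu}) by the elementary estimate $\mu_j(T)\le(j+1)^{-1/p}\|T\|_{p,\infty}$, which follows directly from the definition~(\ref{def:lp_infty_quasinorm}) of the quasi-norm. Because $\sW_{|p,\infty|}$ is defined purely in terms of the singular values $\mu_j$, there is no need to split into real and imaginary parts, so the argument is in fact slightly shorter than that of Corollary~\ref{cor:Bir-Sol.closedness}.

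It suffices to prove that if $(A_\ell)_{\ell\ge 0}$ is a sequence in $\sW_{|p,\infty|}$ converging to $A$ in $\sL_{p,\infty}$, then $A\in\sW_{|p,\infty|}$ and $\Lambda(|A_\ell|)\to\Lambda(|A|)$. First I would record that the scalars $\Lambda(|A_\ell|)=\lim_j j^{1/p}\mu_j(A_\ell)$ satisfy $0\le\Lambda(|A_\ell|)\le\|A_\ell\|_{p,\infty}$, since $j^{1/p}\mu_j(A_\ell)\le(j+1)^{1/p}\mu_j(A_\ell)\le\|A_\ell\|_{p,\infty}$ for every $j$; as $A_\ell\to A$ in $\sL_{p,\infty}$ the norms $\|A_\ell\|_{p,\infty}$ are bounded, so $\{\Lambda(|A_\ell|)\}_{\ell\ge 0}$ is a bounded sequence.

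Next I would take an arbitrary convergent subsequence $\Lambda(|A_{\ell_k}|)\to L$ and show $L=\Lambda(|A|)$. For each $\epsilon>0$, convergence in $\sL_{p,\infty}$ lets me choose an index $k_\epsilon$, which may be arranged so that $k_\epsilon\to\infty$ as $\epsilon\to 0^+$, with $\|A-A_{\ell_{k_\epsilon}}\|_{p,\infty}<\epsilon$; then $\mu_j(A-A_{\ell_{k_\epsilon}})\le(j+1)^{-1/p}\|A-A_{\ell_{k_\epsilon}}\|_{p,\infty}$ yields $\limsup_j j^{1/p}\mu_j(A-A_{\ell_{k_\epsilon}})<\epsilon$. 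Applying the singular-value version of Proposition~\ref{prop:Bir-Sol.Perturbation} to the decomposition $A=A_{\ell_{k_\epsilon}}+(A-A_{\ell_{k_\epsilon}})$, in which $A_{\ell_{k_\epsilon}}\in\sW_{|p,\infty|}$, gives $A\in\sW_{|p,\infty|}$ together with $\Lambda(|A|)=\lim_{\epsilon\to 0^+}\Lambda(|A_{\ell_{k_\epsilon}}|)$. Since $k_\epsilon\to\infty$ and $\Lambda(|A_{\ell_k}|)\to L$, the right-hand side equals $L$, whence $L=\Lambda(|A|)$. Thus every convergent subsequence of the bounded sequence $\{\Lambda(|A_\ell|)\}$ has the same limit $\Lambda(|A|)$, so $\Lambda(|A_\ell|)\to\Lambda(|A|)$; in particular $A\in\sW_{|p,\infty|}$, which establishes both the closedness of $\sW_{|p,\infty|}$ and the continuity of $A\mapsto\Lambda(|A|)$ on it.

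I do not anticipate a genuine obstacle. The only point requiring a little care — exactly as in the proof of Corollary~\ref{cor:Bir-Sol.closedness} — is the bookkeeping ensuring $k_\epsilon\to\infty$ as $\epsilon\to 0^+$, so that the family $\Lambda(|A_{\ell_{k_\epsilon}}|)$ genuinely converges to the limit $L$ of the chosen subsequence; everything else is a routine translation of the earlier argument with $\mu_j$ in place of $\lambda_j^\pm$.
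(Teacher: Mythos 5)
Your proposal is correct and is exactly the argument the paper intends: it transcribes the proof of Corollary~\ref{cor:Bir-Sol.closedness} verbatim to the singular-value setting, using the singular-value version of Proposition~\ref{prop:Bir-Sol.Perturbation} together with the quasi-norm bound $\mu_j(T)\le (j+1)^{-1/p}\|T\|_{p,\infty}$, which is precisely what the paper means by ``arguing along the same lines.'' Your bookkeeping with $k_\epsilon\to\infty$ and the uniqueness of subsequential limits is handled correctly, so there is nothing to add.
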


\begin{corollary}[K.~Fan~{\cite[Theorem 3]{Fa:PNAS51}}; see also~{\cite[Theorem II.2.3]{GK:AMS69}}] \label{cor:Bir-Sol.lunfo-perturbation-sing}
 If $A\in \sW_{|p,\infty|}$ and $B\in (\sL_{p,\infty})_0$. Then $A+B\in \sW_{|p,\infty|}$, and we have 
\begin{equation*}
  \Lambda^\pm\big(|A+B|\big)=\Lambda^\pm\big(|A|\big). 
\end{equation*}
\end{corollary}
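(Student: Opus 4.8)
The plan is to repeat almost verbatim the argument used for Corollary~\ref{cor:Bir-Sol.lunfo-perturbation}, with Proposition~\ref{prop:Bir-Sol.Perturbation} replaced by its singular-value analogue stated immediately above. First I would record that, since $B\in(\sL_{p,\infty})_0$, we have $\mu_j(B)=\op{o}(j^{-1/p})$, hence $j^{1/p}\mu_j(B)\rightarrow 0$ as $j\rightarrow\infty$; in particular $\limsup_j j^{1/p}\mu_j(B)=0\leq\epsilon$ for every $\epsilon>0$.

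Next I would apply the singular-value version of Birman--Solomyak's perturbation theorem to the operator $A+B$ with the trivial splitting $A'_\epsilon:=A$ and $A''_\epsilon:=B$, valid for all $\epsilon>0$. Indeed, $A'_\epsilon=A\in\sW_{|p,\infty|}$ by hypothesis, $A''_\epsilon=B\in\sL_{p,\infty}$, and $\limsup_j j^{1/p}\mu_j(A''_\epsilon)\leq\epsilon$ by the previous step. That proposition therefore yields $A+B\in\sW_{|p,\infty|}$ together with
\[
\Lambda\big(|A+B|\big)=\lim_{\epsilon\rightarrow 0^+}\Lambda\big(|A'_\epsilon|\big)=\Lambda\big(|A|\big),
\]
the last equality holding because $A'_\epsilon$ does not depend on $\epsilon$. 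Since $|A+B|$ and $|A|$ are positive operators, one has $\Lambda^+=\Lambda$ and $\Lambda^-=0$ on them, so this is precisely the asserted identity $\Lambda^\pm(|A+B|)=\Lambda^\pm(|A|)$.

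I do not anticipate any genuine obstacle: all the analytic substance is already packaged in the singular-value perturbation proposition, and here it is invoked with the simplest possible choice of splitting. The only points requiring a little care are the bookkeeping between the notation $\sW_{p,\infty}$ (applied to the positive operators $|A|$, $|A+B|$) and $\sW_{|p,\infty|}$ (applied to $A$, $A+B$), and the observation that one could instead argue through the closedness statement of Corollary~\ref{cor:Bir-Sol.closedness-sing}, approximating $B$ in $\sL_{p,\infty}$ by finite-rank operators (using that $(\sL_{p,\infty})_0$ is the $\sL_{p,\infty}$-closure of the finite-rank ideal and that $\mu_{j+\op{rk}(F)}(A+F)\leq\mu_j(A)\leq\mu_{j-\op{rk}(F)}(A+F)$ for finite-rank $F$); the direct splitting above is, however, shorter and self-contained.
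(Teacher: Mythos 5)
Your proposal is correct and follows exactly the route the paper intends: the paper derives this corollary "along the same lines" as Corollary~\ref{cor:Bir-Sol.lunfo-perturbation}, i.e., by applying the singular-value version of Proposition~\ref{prop:Bir-Sol.Perturbation} to $A+B$ with the constant splitting $A'_\epsilon=A$, $A''_\epsilon=B$ and the observation that $j^{1/p}\mu_j(B)\rightarrow 0$ for $B\in(\sL_{p,\infty})_0$. Your remarks on the $\sW_{p,\infty}$ versus $\sW_{|p,\infty|}$ bookkeeping and the trivial vanishing of $\Lambda^-$ on the positive operators $|A|$, $|A+B|$ are accurate and complete the argument.
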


\subsection{Measurability of Weyl Operators} 
Suppose that $p=1$. We shall now show that every Weyl operator in $\sL_{1,\infty}$ is strongly measurable and explain how to compute its NC integral in terms of the maps $\Lambda^\pm$. More precisely, we shall prove the following result. 

\begin{proposition}\label{prop:Bir-Sol.Weyl-mesurable} 
 Let $A\in \sW_{1,\infty}$. Then $A$ is strongly measurable, and we have
\begin{equation}
 \bint A = \Lambda^+(A)- \Lambda^-(A). 
 \label{eq:Bir-Sol.bint-Lambdapm}
\end{equation}
In particular, if $A$ is selfadjoint, then 
\begin{equation*}
 \bint A = \lim_{j\rightarrow \infty}j\lambda_j^+(A) -  \lim_{j\rightarrow \infty}j\lambda_j^-(A). 
\end{equation*}
\end{proposition}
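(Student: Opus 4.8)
The plan is to reduce the statement to the case of a positive operator, where the Weyl's law directly yields the strong Tauberian estimate~(\ref{eq:NCInt.super-Tauberian}), and then invoke Proposition~\ref{prop:NCInt.strong-measurable}(3). First I would observe that by Proposition~\ref{prop:NCInt.Measurable-Re-Im} (together with the additivity of $\Lambda^\pm$ on real and imaginary parts built into the definition) and by the definition of Weyl operators via parts (iii), it suffices to treat the case $A=A^*$. So assume $A=A^*\in\sW_{1,\infty}$; write $A=A^+-A^-$ where $A^\pm=\tfrac12(|A|\pm A)$, so that $A^\pm\geq 0$ and $\lambda_j(A^\pm)=\lambda_j^\pm(A)$.

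Next I would handle the positive case: let $B\geq 0$ with $B\in\sW_{1,\infty}$, so $\Lambda(B)=\lim_j j\lambda_j(B)=:L$. The key step is to show that $\sum_{j<N}\lambda_j(B)=L\log N+\op{O}(1)$. Since $\lambda_j(B)=L/j+o(1/j)$ is not in itself strong enough (it only gives $L\log N+o(\log N)$), the honest route is instead via the counting function characterization in Remark~\ref{rmk:Bir-Sol.counting}: from $\lim_{\lambda\to 0^+}\lambda N(B;\lambda)=L$ one deduces, via an Abel summation / layer-cake argument relating $\sum_{j<N}\lambda_j(B)$ to $\int_0^\infty N(B;\lambda)\,d\lambda$ truncated appropriately, that $\sum_{j<N}\lambda_j(B)=L\log N+\op{O}(1)$. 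Concretely, writing $\mu_N=\lambda_N(B)\sim L/N$, one has $\sum_{j<N}\lambda_j(B)=N\mu_N+\int_{\mu_N}^{\mu_0}N(B;\lambda)\,d\lambda$; since $N(B;\lambda)=L/\lambda+\op{O}(1)$ (because $\lambda_j(B)=L/j+\op{O}(1/j^{1+\delta})$? — no, only $o(1/j)$) the error term here is the delicate point. In fact the clean statement $\sum_{j<N}\lambda_j(B)=L\log N+\op{O}(1)$ need \emph{not} hold for a general Weyl operator; what is true and sufficient is that $B-LT_0\in(\sL_{1,\infty})_0$. This is because $\mu_j(B-LT_0)$: one compares $\lambda_j(B)$ and $L/(j+1)$, and $\lim_j j(\lambda_j(B)-L/(j+1))=\lim_j(j\lambda_j(B)-L\cdot j/(j+1))=L-L=0$, so $B-LT_0\in(\sL_{1,\infty})_0$ once one checks the singular values of the difference decay like $o(1/j)$ — which follows since $B$ and $LT_0$ are simultaneously ``ordered'' only up to the perturbation result, so more carefully one writes $B=LT_0+(B-LT_0)$ with $B-LT_0$ selfadjoint and applies Corollary~\ref{cor:Bir-Sol.lunfo-perturbation} in reverse.

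Granting $B-LT_0\in\Com(\sL_{1,\infty})+(\sL_{1,\infty})_0\subset\Com(\sL_{1,\infty})$ (using Proposition~\ref{prop:NCint.Com-DFWWW}, since $(\sL_{1,\infty})_0\subset\overline{\Com}$; or directly that $(\sL_{1,\infty})_0$ lies in the kernel of every continuous trace), we get that for every normalized positive trace $\varphi$, $\varphi(B)=L\varphi(T_0)=L$; hence $B$ is strongly measurable with $\bint B=L=\Lambda(B)$. Applying this to $B=A^+$ and $B=A^-$ and using linearity of $\bint$ on $\sMs$ (Proposition~\ref{prop:NCInt.strong-measurable}), we conclude $A=A^+-A^-\in\sMs$ with $\bint A=\Lambda(A^+)-\Lambda(A^-)=\Lambda^+(A)-\Lambda^-(A)$. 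Finally, for general $A\in\sW_{1,\infty}$, apply the selfadjoint case to $\Re A$ and $\Im A$ and combine via Proposition~\ref{prop:NCInt.Measurable-Re-Im}, which gives $\bint A=\bint\Re A+i\bint\Im A=(\Lambda^+(\Re A)-\Lambda^-(\Re A))+i(\Lambda^+(\Im A)-\Lambda^-(\Im A))=\Lambda^+(A)-\Lambda^-(A)$ by definition of $\Lambda^\pm$ on non-selfadjoint operators. The selfadjoint special case in the statement is then immediate from $\Lambda^\pm(A)=\lim_j j\lambda_j^\pm(A)$.

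The main obstacle I anticipate is precisely the step $B-LT_0\in(\sL_{1,\infty})_0$ for $B\geq 0$ a Weyl operator: naively subtracting eigenvalue sequences is not legitimate because $B$ and $T_0$ need not be simultaneously diagonalizable, so one cannot simply say $\mu_j(B-LT_0)=|\lambda_j(B)-L/(j+1)|$. The correct argument uses Ky Fan's inequality~(\ref{eq:Quantized.properties-mun2}): for any $\epsilon>0$ pick $N_\epsilon$ with $|j\lambda_j(B)-L|<\epsilon$ for $j\geq N_\epsilon$, split $B=B_1+B_2$ with $B_1$ finite rank (the top $N_\epsilon$ eigenvalues) and $\|j B_2\|_{1,\infty}\le L+\epsilon$ with moreover $j\mu_j(B_2)$ and $j\mu_j(L T_0 - \text{(its truncation)})$ both within $\epsilon$ of $L$, so that $\mu_{2j}(B-LT_0)\le \mu_j(B_2 - L(T_0)_2)+(\text{finite rank error})$, and $j\mu_j(B_2-L(T_0)_2)\to 0$. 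This is routine but needs to be spelled out; alternatively one can bypass it entirely by invoking Corollary~\ref{cor:Bir-Sol.lunfo-perturbation} together with the observation that $T_0\in\sW_{1,\infty}$ with $\Lambda(T_0)=1$ and that a selfadjoint Weyl operator with $\Lambda^+=L$, $\Lambda^-=0$ differs from $LT_0$ by an element on which all $\Lambda^\pm$ vanish, hence (again via the perturbation theory plus Remark~\ref{rmk:Bir-Sol.lunf0}) by an element of $(\sL_{1,\infty})_0$ — but justifying \emph{that} last implication is essentially the same computation. Everything else is bookkeeping with the already-established linearity, continuity, and trace properties.
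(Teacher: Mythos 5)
Your global strategy (reduce to $A=A^*$ and then to $A\geq 0$, and compare the positive operator with $\Lambda(A)T_0$) is the same as the paper's, but the decisive step has a genuine gap: the claim that $B-LT_0\in(\sL_{1,\infty})_0$ for a positive Weyl operator $B$ with $\Lambda(B)=L$ is \emph{false} if $T_0$ is an arbitrary positive operator with eigenvalues $(j+1)^{-1}$, and neither of your two proposed repairs closes it. For a counterexample, partition an orthonormal basis into two-dimensional blocks $\op{Span}\{e_{a_k},e_{b_k}\}$ with $b_k\gg a_k$ (and $a_k\leq Ck$), let $T_0$ be diagonal in the basis $(e_j)$, and let $B$ act on each block with the \emph{same} two eigenvalues $(a_k+1)^{-1},(b_k+1)^{-1}$ but with eigenvectors $\tfrac1{\sqrt2}(e_{a_k}\pm e_{b_k})$: then $B\geq0$ has exactly the eigenvalue sequence $((j+1)^{-1})$, so $B\in\sW_{1,\infty}$ with $\Lambda(B)=1$, yet on the $k$-th block $B-T_0$ has two singular values $\approx \tfrac{1}{\sqrt2}(a_k+1)^{-1}$, so $\mu_j(B-T_0)\gtrsim c/j$ and $B-T_0\notin(\sL_{1,\infty})_0$. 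This also explains why your Ky Fan splitting cannot work: \eqref{eq:Quantized.properties-mun2} applied to $B_2-L(T_0)_2$ only yields $\limsup_j j\mu_j(B_2-L(T_0)_2)=\op{O}(L)$, never $\op{o}(1)$, since subtracting two operators of comparable size $\sim L/j$ in unrelated bases loses nothing; and your ``alternative'' via Corollary~\ref{cor:Bir-Sol.lunfo-perturbation} presupposes exactly what is to be proved (that $B-LT_0$ is a Weyl operator with vanishing $\Lambda^{\pm}$, equivalently lies in $(\sL_{1,\infty})_0$), as you yourself concede.

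The missing idea, which is how the paper argues, is to use the freedom in the choice of $T_0$: since \emph{every} positive operator with eigenvalue sequence $((j+1)^{-1})$ takes the value $1$ under every normalized positive trace, you may define $T_0$ in the eigenbasis $(\xi_j)$ of $B$ itself, $T_0\xi_j=(j+1)^{-1}\xi_j$. Then $B-\Lambda(B)T_0$ is diagonal in that basis with entries $\gamma_j=\lambda_j(B)-\Lambda(B)(j+1)^{-1}$ satisfying $j\gamma_j\to0$; the only remaining subtlety is that $(|\gamma_j|)$ need not be non-increasing, and this is handled by the min-max principle \eqref{eq:min-max} with $E=\op{Span}\{\xi_k;\,k<j\}$, giving $\mu_j(B-\Lambda(B)T_0)\leq\sup_{k\geq j}|\gamma_k|$, hence $j\mu_j(B-\Lambda(B)T_0)\to0$ and $B-\Lambda(B)T_0\in(\sL_{1,\infty})_0$. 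With this fix your remaining bookkeeping is correct and essentially reproduces the paper's proof; note only that your inclusion $(\sL_{1,\infty})_0\subset\Com(\sL_{1,\infty})$ is wrong ($(\sL_{1,\infty})_0$ lies merely in the closure of the commutator space), but your parenthetical remark that $(\sL_{1,\infty})_0$ is annihilated by every continuous trace, i.e.\ Proposition~\ref{prop:NCInt.strong-measurable}, is the right substitute. Also, the paper obtains measurability and the value of $\bint A$ more directly, via Proposition~\ref{prop:NCInt.Measurable-selfadj} and the elementary Ces\`aro-type limit $\frac1{\log N}\sum_{j<N}(\lambda_j^+(A)-\lambda_j^-(A))\to\Lambda^+(A)-\Lambda^-(A)$, reserving the $T_0$-argument for strong measurability alone; your plan of deducing everything from strong measurability of $A^{\pm}$ would also be fine once the gap above is repaired.
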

\begin{proof}
 Let us first  show that $A$ is measurable and its integral is given by~(\ref{eq:Bir-Sol.bint-Lambdapm}). By taking real and imaginary parts and using Proposition~\ref{prop:NCInt.Measurable-Re-Im} we may assume that $A$ is selfadjoint. In this case we have 
\begin{equation*}
 \lim_{N\rightarrow \infty} \frac1{\log N}\sum_{j<N} (\lambda_j^+(A)-\lambda_j^-(A)) = 
  \lim_{j\rightarrow \infty}j\lambda_j^+(A) -  \lim_{j\rightarrow \infty}j\lambda_j^-(A) =\Lambda^+(A)- \Lambda^-(A). 
\end{equation*}
Combining this with Proposition~\ref{prop:NCInt.Measurable-selfadj} shows that $A$ is measurable and $\bint A=\Lambda^+(A)- \Lambda^-(A)$. 
 
 It remains to show that any $A\in \sW_{1,\infty}$ is strongly measurable. By taking real and imaginary and  their respective positive and negative parts reduces to the case $A\geq 0$, which we assume thereon. Let $(\xi_j)_{j\geq 0}$ be an orthonormal basis of $\sH$ such that $A\xi_j=\lambda_j(A)$, and let $T_0$ be the operator on $\sH$ such that $T_0\xi_j=(j+1)^{-1}\xi_j$ for all $j\geq 0$. Note that $T_0$ is strongly measurable. 
 
Set $B=A-\Lambda(A)T_0$.  For all $j \geq 0$, we have
 \begin{equation*}
 B\xi_j=\gamma_j\xi_j, \qquad \text{where}\ \gamma_j:=\lambda_j(A)-\Lambda(A)(j+1)^{-1}. 
\end{equation*}
Moreover, the fact that $j\lambda_j(A)\rightarrow \Lambda(A)$ implies that  $j\gamma_j\rightarrow 0$ as $j\rightarrow \infty$. 

Let $j\geq 0$. By applying the min-max principle~(\ref{eq:min-max}) and taking $E=\op{Span}\{\xi_k; k<j\}$ we get
\begin{equation*}
 \mu_j(B) \leq \big\|B_{|E^\perp}\big\| = \sup_{k\geq j}|\gamma_k|. 
\end{equation*}
This implies that $ j\mu_j(B) \leq \sup_{k\geq j}j|\gamma_k| \leq  \sup_{k\geq j}k|\gamma_k|$.  Thus,
\begin{equation*}
 \limsup_{j\rightarrow \infty} j\mu_j(B) \leq \lim_{j\rightarrow \infty}  \sup_{k\geq j}k|\gamma_k|= \limsup_{j\rightarrow \infty} j|\gamma_j|= \lim_{j\rightarrow \infty} j|\gamma_j|=0. 
\end{equation*}
This shows that $B$ is in $ (\sL_{1,\infty})_0$, and hence is strongly measurable by Proposition~\ref{prop:NCInt.strong-measurable}. As $A=B+\Lambda(T)T_0$, it follows that $A$ is strongly measurable as well. The proof is complete. 
\end{proof}

\begin{corollary}\label{cor:Bir-Sol.Weyl-mesurable-|A|}
 Let $A\in  \sW_{|1,\infty|}$. Then $|A|$ is strongly measurable, and we have
\begin{equation*}
  \bint |A| = \lim_{j\rightarrow \infty}j\mu_j(A)
\end{equation*}
\end{corollary}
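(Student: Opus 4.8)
The plan is to deduce this immediately from Proposition~\ref{prop:Bir-Sol.Weyl-mesurable} applied to the positive operator $|A|$. First I would unwind the definition: by construction $A\in\sW_{|1,\infty|}$ means precisely that $|A|\in\sW_{1,\infty}$, and since $|A|\geq 0$ this is case~(i) of the definition of a Weyl operator. Recalling that the eigenvalue sequence of a positive operator coincides with its singular value sequence and that $\mu_j(|A|)=\mu_j(A)$ by~(\ref{eq:Quantized.properties-mun1}), we get that $\Lambda(|A|)=\lim_{j\rightarrow\infty}j\lambda_j(|A|)=\lim_{j\rightarrow\infty}j\mu_j(A)$ exists.

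Next I would apply Proposition~\ref{prop:Bir-Sol.Weyl-mesurable} directly to $|A|\in\sW_{1,\infty}$. That proposition asserts that $|A|$ is strongly measurable and that $\bint|A|=\Lambda^+(|A|)-\Lambda^-(|A|)$. Since $|A|$ is positive, its negative part vanishes, so $\lambda_j^-(|A|)=0$ for all $j$ and hence $\Lambda^-(|A|)=0$, while $\lambda_j^+(|A|)=\lambda_j(|A|)=\mu_j(A)$ gives $\Lambda^+(|A|)=\Lambda(|A|)=\lim_{j\rightarrow\infty}j\mu_j(A)$. Combining these two identities yields $\bint|A|=\lim_{j\rightarrow\infty}j\mu_j(A)$, which is the claimed formula.

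There is essentially no obstacle here: the statement is a specialization of Proposition~\ref{prop:Bir-Sol.Weyl-mesurable} to positive operators. The only point requiring a little care is the bookkeeping between the notations $\lambda_j$, $\mu_j$ and $\lambda_j^\pm$ for the positive operator $|A|$, together with the observation that $|A|$ genuinely lies in $\sW_{1,\infty}$ (and not merely in $\sW_{|1,\infty|}$) so that Proposition~\ref{prop:Bir-Sol.Weyl-mesurable} is applicable — but this is immediate from case~(i) of the definition of a Weyl operator.
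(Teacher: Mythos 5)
Your proof is correct and matches the paper's intent exactly: the corollary is stated without proof there precisely because it is this immediate specialization of Proposition~\ref{prop:Bir-Sol.Weyl-mesurable} to the positive operator $|A|\in\sW_{1,\infty}$, with $\Lambda^{-}(|A|)=0$ and $\Lambda^{+}(|A|)=\lim_{j\to\infty}j\mu_j(A)$. Your bookkeeping with $\lambda_j$, $\lambda_j^{\pm}$, and $\mu_j$ is accurate, so nothing further is needed.
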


\section{Weyl's Laws for Negative Order \psidos} \label{sec:Weyl-neg-PsiDOs}\label{sec:Bir-Sol-PDOs} 
In the 70s Birman-Solomyak~\cite{BS:VLU77, BS:VLU79, BS:SMJ79} established a Weyl's law for negative order \psidos. Unfortunately, the main key technical details are exposed in a somewhat compressed manner in the Russian article~\cite{BS:VLU79}, the translation of which remains unavailable. 

In this section, after explaining how this implies a stronger version of Connes's trace theorem, we shall provide a ``soft proof'' of Birman-Solomyak's result. This will answer Question~D. 

\subsection{Weyl's law for negative order \psidos} 
In the following $(M^n,g)$ is a closed Riemannian manifold and $E$ is a Hermitian vector bundle over $M$. We keep on using the notation of \S\S\ref{subsec.PsiDOs}. 

\begin{theorem}[Birman-Solomyak~\cite{BS:VLU77, BS:VLU79, BS:SMJ79}]\label{thm:Bir-Sol-asymp} Let $P\in \Psi^{-m}(M,E)$, $m<0$, and set $p=nm^{-1}$. 
\begin{enumerate}
 \item $P$ and $|P|$ are Weyl operators in $\sL_{p,\infty}$. 
 
 \item  We have
\begin{equation*}
 \lim_{j\rightarrow \infty} j^{\frac1p} \mu_j(P)=\bigg[ \frac1{n} (2\pi)^{-n} \int_{S^*M} \tr_E\big[ |\sigma(P)(x,\xi)|^{p} \big] dx d\xi\bigg]^{\frac1{p}}.
 \label{eq:Weyl.Bir-Sol-mu} 
\end{equation*}

 \item If $P$ is selfadjoint, then 
\begin{equation*}
 \lim_{j\rightarrow \infty} j^{\frac1p} \lambda^\pm_j(P)= \bigg[\frac1{n} (2\pi)^{-n} \int_{S^*M} \tr_E\big[ \sigma(P)(x,\xi)_\pm^{p} \big] dx d\xi\bigg]^{\frac1{p}}. 
 \label{eq:Weyl.Bir-Sol-selfadjoint}
\end{equation*}
\end{enumerate}
 \end{theorem}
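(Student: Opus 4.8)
\emph{Plan.} The strategy is the one announced in the introduction: first prove the Weyl's law for the inverse of a positive elliptic operator by relating its zeta function to the noncommutative residue, and then bootstrap to an arbitrary $P\in \Psi^{-m}(M,E)$ by means of the perturbation theory of \S\ref{sec:Weyl} (Proposition~\ref{prop:Bir-Sol.Perturbation} and Corollaries~\ref{cor:Bir-Sol.closedness}--\ref{cor:Bir-Sol.lunfo-perturbation-sing}, all of which are ``soft'', being consequences of Ky~Fan's inequalities) together with the BKS inequality. Throughout, $p=nm^{-1}$. I use freely that a \psido\ of order $\le -m-1$ lies in $(\sL_{p,\infty})_0$, and that given a smooth positive-definite homogeneous symbol of order $m>0$ one can find a positive invertible elliptic \psido\ of order $m$ with that principal symbol (quantize, symmetrize, apply G{\aa}rding, add a large constant).

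\emph{Step 1: inverses of positive elliptic operators.} Let $Q\in \Psi^{m}(M,E)$ be positive, invertible and elliptic, so $P=Q^{-1}\in \Psi^{-m}(M,E)\cap \sL_{p,\infty}$ is positive. By Seeley's theory of complex powers $z\mapsto \Tr(Q^{-z})$ is holomorphic for $\Re z>p$ and extends meromorphically with poles only at the real points $(n-k)m^{-1}$, $k\ge 0$; in particular the only singularity on the line $\Re z=p$ is a simple pole at $z=p$, whose residue equals $m^{-1}\Res(Q^{-p})=m^{-1}(2\pi)^{-n}\int_{S^*M}\tr_E[\sigma(Q)(x,\xi)^{-p}]\,dxd\xi>0$. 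A Wiener--Ikehara Tauberian theorem (or Karamata's theorem applied to $\Tr(e^{-sQ})$) then converts this into $N^{+}(P;\lambda)\sim \tfrac1{pm}(2\pi)^{-n}\int_{S^*M}\tr_E[\sigma(P)^{p}]\,\lambda^{-p}$ as $\lambda\to 0^{+}$, i.e.\ (using $pm=n$)
\[
 \lim_{j\to\infty} j\,\lambda_j(P)^{p}=\tfrac1n (2\pi)^{-n}\int_{S^*M}\tr_E\big[|\sigma(P)(x,\xi)|^{p}\big]\,dxd\xi .
\]
Thus $P\in\sW_{p,\infty}$ and the three assertions hold for $P=Q^{-1}$ (here $\sigma(P)_-=0$).

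\emph{Step 2: bootstrapping.} Fix a positive invertible elliptic \psido\ $\Delta$ of order $m$ with scalar principal symbol, e.g.\ $(1+\Delta_g)^{m/2}\otimes\id_E$. \textit{(a)} If $A=A^{*}\in\Psi^{-m}(M,E)$ has $\sigma(A)\ge 0$, let $A_\epsilon\in\Psi^{-m}(M,E)$ be positive, invertible and elliptic with $\sigma(A_\epsilon)=\sigma(A)+\epsilon\,\sigma(\Delta^{-1})$ (the inverse of a positive elliptic \psido\ of order $m$ with principal symbol $(\sigma(A)+\epsilon\sigma(\Delta^{-1}))^{-1}$); Step~1 gives $A_\epsilon\in\sW_{p,\infty}$ with $\Lambda^{+}(A_\epsilon)=[\tfrac1n(2\pi)^{-n}\int_{S^*M}\tr_E((\sigma(A)+\epsilon\sigma(\Delta^{-1}))^{p})]^{1/p}$ and $\Lambda^{-}(A_\epsilon)=0$. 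Since $A-A_\epsilon=-\epsilon\Delta^{-1}+T_\epsilon$ with $T_\epsilon\in\Psi^{-m-1}\subset(\sL_{p,\infty})_0$, Remark~\ref{rmk:Bir-Sol.lunf0} gives $\limsup j^{1/p}\lambda^{+}_j(A-A_\epsilon)=0$ and $\limsup j^{1/p}\lambda^{-}_j(A-A_\epsilon)\le \epsilon\|\Delta^{-1}\|_{p,\infty}$. Proposition~\ref{prop:Bir-Sol.Perturbation} and dominated convergence then yield $A\in\sW_{p,\infty}$ with $\Lambda^{+}(A)=[\tfrac1n(2\pi)^{-n}\int_{S^*M}\tr_E(\sigma(A)^{p})]^{1/p}$ and $\Lambda^{-}(A)=0$. \textit{(b)} For arbitrary $P\in\Psi^{-m}(M,E)$, the operator $P^{*}P\in\Psi^{-2m}(M,E)$ is non-negative selfadjoint with $\sigma(P^{*}P)=|\sigma(P)|^{2}$, so part~(a) (with $2m$ in place of $m$) applies; since $\mu_j(P)=\lambda_j(P^{*}P)^{1/2}$ this gives $P\in\sW_{|p,\infty|}$ and assertion~(2), and in particular $|P|=(P^{*}P)^{1/2}$ is a Weyl operator.

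\emph{Step 3: signed eigenvalues (where BKS enters).} Let $P=P^{*}\in\Psi^{-m}(M,E)$. For $\epsilon>0$ pick smooth $g_\epsilon,h_\epsilon\ge 0$ on $\R$ with $g_\epsilon h_\epsilon\equiv 0$, $g_\epsilon(t)=0$ for $t\le\epsilon$, $h_\epsilon(t)=0$ for $t\ge-\epsilon$, and $|g_\epsilon(t)-t_{+}|+|h_\epsilon(t)-t_{-}|\le C\epsilon$ on the range of $\sigma(P)$; by smooth functional calculus of the Hermitian-matrix-valued symbol $\sigma(P)$, the sections $g_\epsilon(\sigma(P)),h_\epsilon(\sigma(P))\ge 0$ are smooth. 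Choose selfadjoint $R^{\pm}_\epsilon\in\Psi^{-m}(M,E)$ with $\sigma(R^{+}_\epsilon)=g_\epsilon(\sigma(P))$, $\sigma(R^{-}_\epsilon)=h_\epsilon(\sigma(P))$; Step~2(a) (applied to $R^{\pm}_\epsilon$) gives $R^{\pm}_\epsilon\in\sW_{p,\infty}$ with $\Lambda^{+}(R^{\pm}_\epsilon)=[\tfrac1n(2\pi)^{-n}\int_{S^*M}\tr_E(\sigma(R^{\pm}_\epsilon)^{p})]^{1/p}$ and $\Lambda^{-}(R^{\pm}_\epsilon)=0$. Put $P'_\epsilon=R^{+}_\epsilon-R^{-}_\epsilon$. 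Because $\sigma(R^{+}_\epsilon)\sigma(R^{-}_\epsilon)=g_\epsilon(\sigma(P))h_\epsilon(\sigma(P))=0$, the products $R^{\pm}_\epsilon R^{\mp}_\epsilon$ have order $\le -2m-1$, so $(P'_\epsilon)^{2}-(R^{+}_\epsilon+R^{-}_\epsilon)^{2}\in(\sL_{p/2,\infty})_0$; the BKS inequality then forces $|P'_\epsilon|-|R^{+}_\epsilon+R^{-}_\epsilon|\in(\sL_{p,\infty})_0$, while $|R^{+}_\epsilon+R^{-}_\epsilon|-(R^{+}_\epsilon+R^{-}_\epsilon)=2(R^{+}_\epsilon+R^{-}_\epsilon)_{-}\in(\sL_{p,\infty})_0$ since $\Lambda^{-}(R^{+}_\epsilon+R^{-}_\epsilon)=0$ by Step~2(a). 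Hence $(P'_\epsilon)_{\pm}=\tfrac12(|P'_\epsilon|\pm P'_\epsilon)=R^{\pm}_\epsilon \bmod (\sL_{p,\infty})_0$, so by Corollaries~\ref{cor:Bir-Sol.closedness-sing}--\ref{cor:Bir-Sol.lunfo-perturbation-sing} we get $P'_\epsilon\in\sW_{p,\infty}$ with $\Lambda^{\pm}(P'_\epsilon)=\Lambda^{+}(R^{\pm}_\epsilon)$. Finally $\|\sigma(P-P'_\epsilon)\|_\infty=\|\sigma(P)-(g_\epsilon-h_\epsilon)(\sigma(P))\|_\infty\le C\epsilon$, so Step~2(b) applied to $P-P'_\epsilon$ gives $\limsup j^{1/p}\lambda^{\pm}_j(P-P'_\epsilon)\le \limsup j^{1/p}\mu_j(P-P'_\epsilon)\le C'\epsilon$. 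Proposition~\ref{prop:Bir-Sol.Perturbation}, together with $g_\epsilon(\sigma(P))\to\sigma(P)_{+}$ and $h_\epsilon(\sigma(P))\to\sigma(P)_{-}$ uniformly, then yields $P\in\sW_{p,\infty}$ with $\Lambda^{\pm}(P)=[\tfrac1n(2\pi)^{-n}\int_{S^*M}\tr_E(\sigma(P)(x,\xi)_{\pm}^{p})]^{1/p}$, which is assertion~(3). Taking real and imaginary parts (which are selfadjoint \psidos\ of order $-m$) and combining with Step~2(b) gives assertion~(1) for arbitrary $P$.

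\emph{Main obstacle.} Steps~1 and~2 are essentially bookkeeping once one has Seeley's complex powers, the residue formula, a Tauberian theorem, and Birman--Solomyak's Proposition~\ref{prop:Bir-Sol.Perturbation}. The genuinely delicate point is Step~3: the operator-theoretic positive and negative parts $P_{\pm}=\tfrac12(|P|\pm P)$ of a selfadjoint $P$ are not \psidos, and when the principal symbol $\sigma(P)$ fails to be invertible (the non-elliptic case) one cannot manufacture pseudodifferential spectral projections to split them. The way around this is the symbol-level splitting by smooth functions with \emph{disjoint supports}, together with the BKS inequality to show that $|P'_\epsilon|$ agrees with $R^{+}_\epsilon+R^{-}_\epsilon$ modulo an $(\sL_{p,\infty})_0$-error despite $R^{+}_\epsilon R^{-}_\epsilon$ being merely of lower order rather than exactly zero; this is exactly the ingredient that, fed into the Birman--Solomyak perturbation machinery, makes the argument ``soft''.
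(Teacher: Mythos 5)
Your proposal is correct in substance and follows the same two-tier strategy as the paper (elliptic case via the residue/zeta function plus a Tauberian theorem, then bootstrapping via Birman--Solomyak's perturbation theory and the BKS inequality), but your bootstrapping is organized around genuinely different decompositions. For the singular-value law you apply the nonnegative-symbol case to $P^*P$ and use $\mu_j(P)^2=\lambda_j(P^*P)$, which avoids BKS altogether at that stage, whereas the paper regularizes under the square root, $A_\epsilon=\sqrt{P^*P+\epsilon^2\Delta_E^{-m}}$, and invokes Lemma~\ref{lem:Weyl.BKS} twice (to get $A_\epsilon\to|P|$ in $\sL_{p,\infty}$ and to compare $A_\epsilon$ with $|Q_\epsilon|^{-1}$ for an elliptic $Q_\epsilon$); likewise your treatment of a selfadjoint operator with nonnegative symbol, adding $\epsilon\,\sigma(\Delta^{-1})$ at the symbol level and feeding $-\epsilon\Delta^{-1}+\Psi^{-m-1}$ directly into Proposition~\ref{prop:Bir-Sol.Perturbation}, is a clean shortcut. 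For the signed law the paper compares $B_\epsilon=\tfrac12(A_\epsilon+P)\to P_+$ with $\tilde Q_\epsilon^{-1}$, where $\tilde Q_\epsilon$ is elliptic with the positive-definite symbol $(\tfrac12\sqrt{|\sigma(P)|^2+\epsilon^2|\xi|^{-2m}}+\tfrac12\sigma(P))^{-1}$, so it never needs functional calculus on the symbol; you instead split the symbol by smooth cutoffs with disjoint supports and use BKS once to identify $|R^+_\epsilon-R^-_\epsilon|$ with $R^+_\epsilon+R^-_\epsilon$ modulo $(\sL_{1,\infty})_0$-type errors. Both routes work; the paper's keeps every approximant an inverse (or square root) of an elliptic operator, yours keeps every approximation at the symbol level, at the cost of the matrix functional calculus.

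Two points need repair, neither fatal. First, $g_\epsilon(\sigma(P)(x,\xi))$ is not positively homogeneous of degree $-m$ (smoothing $t_\pm$ at scale $\epsilon$ destroys homogeneity), so as written there is no classical operator $R^{\pm}_\epsilon\in\Psi^{-m}(M,E)$ with that principal symbol. The standard fix is to cut at the level of the degree-zero representative, i.e.\ take $\sigma(R^{+}_\epsilon)(x,\xi)=|\xi|^{-m}g_\epsilon\big(|\xi|^{m}\sigma(P)(x,\xi)\big)$ (equivalently, apply $g_\epsilon,h_\epsilon$ on $S^*M$ and extend homogeneously); this preserves nonnegativity, the relation $\sigma(R^{+}_\epsilon)\sigma(R^{-}_\epsilon)=0$, and the bound $\|\sigma(P-P'_\epsilon)\|_{L^\infty(S^*M)}\leq C\epsilon$, so the rest of Step~3 goes through verbatim. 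Second, Lemma~\ref{lem:Weyl.BKS} as stated does not by itself map $(\sL_{p/2,\infty})_0$ into $(\sL_{p,\infty})_0$; but your difference $(P'_\epsilon)^2-(R^{+}_\epsilon+R^{-}_\epsilon)^2$ is a \psido\ of order $\leq -2m-1$, hence lies in $\sL_{q,\infty}$ with $q=n(2m+1)^{-1}<p/2$, and BKS then places $|P'_\epsilon|-|R^{+}_\epsilon+R^{-}_\epsilon|$ in $\sL_{2q,\infty}\subset(\sL_{p,\infty})_0$ --- exactly the maneuver used in the paper's proof, so you should phrase it that way. (Minor point: for the positive operators $(P'_\epsilon)_\pm$ the natural references are Corollaries~\ref{cor:Bir-Sol.closedness} and~\ref{cor:Bir-Sol.lunfo-perturbation}, though the singular-value versions you cite coincide with them for positive operators.)
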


\begin{remark}\label{rmk.Bir-Sol.non-smooth} 
In~\cite{BS:VLU77, BS:VLU79} Birman-Solomyak established the Weyl's laws above for compactly supported pseudodifferential operators on $\R^n$ under very low regularity assumptions on the symbols. Furthermore, the symbols are allowed to be anisotropic. This was extended to classical \psidos\ on closed manifolds in~\cite{BS:SMJ79}. 
\end{remark}

\begin{remark}
 We refer to~\cite{AA:FAA96, An:MUSSRS90, BY:JSM84, DR:LNM87, Gr:CPDE14, Iv:Springer19, Ro:arXiv21, RS:EMS21}, and the references therein, for various generalizations and applications of Birman-Solomyak's asymptotics.  
\end{remark}

Combining Theorem~\ref{thm:Bir-Sol-asymp} with Corollary~\ref{prop:Bir-Sol.Weyl-mesurable} and Corollary~\ref{cor:Bir-Sol.Weyl-mesurable-|A|} provides us with a stronger form of Connes' trace theorem (compare Proposition~\ref{prop:NCInt.Connes-trace-thm}). 

\begin{theorem}\label{thm:Weyl.strong-CTT} 
 Let $P\in \Psi^{-n}(M,E)$. The following holds. 
\begin{enumerate}
 \item $P$ and $|P|$ are Weyl operators in $\sL_{1,\infty}$, and hence are strongly measurable. 
 
 \item We have 
\begin{gather}
 \bint P = \Lambda(P)=  \frac1{n} (2\pi)^{-n} \int_{S^*M} \tr_E\big[ \sigma(P)(x,\xi) \big] dx d\xi ,\\
 \bint |P|= \lim_{j\rightarrow \infty} j\mu_j(P)= \frac1{n} (2\pi)^{-n} \int_{S^*M} \tr_E\big[ |\sigma(P)(x,\xi)| \big] dx d\xi.
\label{eq:Weyl.Bir-Sol-mu-lunf}  
\end{gather}

\item If $P$ is selfadjoint, then 
\begin{equation}
 \lim_{j\rightarrow \infty} j\lambda^\pm_j(P)= \frac1{n} (2\pi)^{-n} \int_{S^*M} \tr_E\big[ \sigma(P)(x,\xi)_\pm \big] dx d\xi.
 \label{eq:Weyl.Bir-Sol-selfad-lunf}   
\end{equation}
\end{enumerate}
\end{theorem}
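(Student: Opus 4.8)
The plan is to derive Theorem~\ref{thm:Weyl.strong-CTT} by specializing Theorem~\ref{thm:Bir-Sol-asymp} to the exponent $p=1$ and feeding the outcome into Proposition~\ref{prop:Bir-Sol.Weyl-mesurable} and Corollary~\ref{cor:Bir-Sol.Weyl-mesurable-|A|}. The only ingredient that is not purely formal is a short amount of symbol bookkeeping.

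First I would observe that, since $\dim M=n$, an operator $P\in\Psi^{-n}(M,E)$ has order $-n$, so by the discussion in \S\ref{subsec.PsiDOs} it lies in $\sL_{p,\infty}$ with $p=n/n=1$. Theorem~\ref{thm:Bir-Sol-asymp} applied to this $P$ (so that $p=1$) then directly gives part~(1): $P$ and $|P|$ are Weyl operators in $\sL_{1,\infty}$, hence, by Proposition~\ref{prop:Bir-Sol.Weyl-mesurable}, strongly measurable. That same proposition yields $\bint P=\Lambda^+(P)-\Lambda^-(P)$ --- the quantity written $\Lambda(P)$ in the statement --- and Corollary~\ref{cor:Bir-Sol.Weyl-mesurable-|A|} yields $\bint|P|=\lim_{j\to\infty}j\mu_j(P)$, which combined with part~(2) of Theorem~\ref{thm:Bir-Sol-asymp} at $p=1$ is precisely the formula~(\ref{eq:Weyl.Bir-Sol-mu-lunf}) for $\bint|P|$.

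Next I would treat the selfadjoint case. When $P=P^*$, part~(3) of Theorem~\ref{thm:Bir-Sol-asymp} at $p=1$ is verbatim~(\ref{eq:Weyl.Bir-Sol-selfad-lunf}) for $\lim_{j\to\infty}j\lambda^\pm_j(P)$, and this identifies $\Lambda^\pm(P)$. Substituting into $\bint P=\Lambda^+(P)-\Lambda^-(P)$ and using the pointwise identity $\tr_E[\sigma(P)(x,\xi)_+]-\tr_E[\sigma(P)(x,\xi)_-]=\tr_E[\sigma(P)(x,\xi)_+-\sigma(P)(x,\xi)_-]=\tr_E[\sigma(P)(x,\xi)]$, valid since the positive and negative parts of the selfadjoint endomorphism $\sigma(P)(x,\xi)$ recombine to $\sigma(P)(x,\xi)$, yields $\bint P=\frac1n(2\pi)^{-n}\int_{S^*M}\tr_E[\sigma(P)(x,\xi)]\,dxd\xi=\frac1n\Res(P)$, which proves~(2) for selfadjoint $P$. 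For a general $P\in\Psi^{-n}(M,E)$ I would reduce to this case by writing $P=\Re P+i\,\Im P$ with $\Re P=\frac12(P+P^*)$ and $\Im P=\frac1{2i}(P-P^*)$, both selfadjoint operators in $\Psi^{-n}(M,E)$ whose principal symbols are $\frac12(\sigma(P)+\sigma(P)^*)$ and $\frac1{2i}(\sigma(P)-\sigma(P)^*)$. By the definition of $\Lambda^\pm$ on non-selfadjoint operators,
\begin{equation*}
 \bint P=\Lambda^+(P)-\Lambda^-(P)=\big(\Lambda^+(\Re P)-\Lambda^-(\Re P)\big)+i\big(\Lambda^+(\Im P)-\Lambda^-(\Im P)\big),
\end{equation*}
so applying the selfadjoint formula to $\Re P$ and $\Im P$ and recombining under the fibrewise trace via $\tr_E[\sigma(\Re P)(x,\xi)]+i\,\tr_E[\sigma(\Im P)(x,\xi)]=\tr_E[\sigma(P)(x,\xi)]$ gives the stated expression for $\bint P$ in~(2).

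Since all the substantive content sits in Theorem~\ref{thm:Bir-Sol-asymp}, Proposition~\ref{prop:Bir-Sol.Weyl-mesurable} and Corollary~\ref{cor:Bir-Sol.Weyl-mesurable-|A|}, I do not anticipate any serious obstacle here. The only point that deserves a line of care is the symbol-level bookkeeping above: that taking real/imaginary parts and positive/negative parts of $P$ corresponds to the analogous fibrewise operations on the principal symbol $\sigma(P)$, and that these operations are compatible with the fibrewise trace $\tr_E$ --- both immediate from the multiplicativity and $*$-compatibility of the principal symbol map together with the spectral theorem applied to $\sigma(P)(x,\xi)\in\End(E_x)$.
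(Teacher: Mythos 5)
Your proposal is correct and follows essentially the same route as the paper, which states Theorem~\ref{thm:Weyl.strong-CTT} precisely as the combination of Theorem~\ref{thm:Bir-Sol-asymp} at $p=1$ with Proposition~\ref{prop:Bir-Sol.Weyl-mesurable} and Corollary~\ref{cor:Bir-Sol.Weyl-mesurable-|A|}, leaving the details implicit. Your symbol-level bookkeeping (recombining $\sigma(P)_\pm$ and the real/imaginary parts under $\tr_E$, and reading $\Lambda(P)$ as $\Lambda^+(P)-\Lambda^-(P)$) is exactly the intended, and correct, way to fill in those details.
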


In what follows we let $\Delta_E=\nabla^*\nabla$ be the Laplacian of some Hermitian connection $\nabla$ on $E$. In particular, $\Delta_E$ is (formally) selfadjoint and has principal symbol $\sigma(\Delta_E)=|\xi|^2\op{id}_{E_x}$ (where we denote by $|\cdot|$ the Riemannian metric on $T^*M$). 

Theorem~\ref{thm:Bir-Sol-asymp} leads to the following improvement of Connes' integration formula~(\ref{eq:NCInt.Connes-int-formula}).

\begin{corollary}\label{cor:Weyl.strong-CIF-smooth} 
 Let $u\in C^\infty(M,\End (E))$. The following hold. 
 \begin{enumerate}
\item The operators $(\Delta^E)^{-n/4}u(\Delta^E)^{-n/4}$ and $|(\Delta^E)^{-n/4}u(\Delta^E)^{-n/4}|$ are Weyl operators in $\sL_{1,\infty}$, and hence are strongly measurable. 

\item We have 
 \begin{gather}
 \bint \Delta_E^{-\frac{n}{4}}u\Delta_E^{-\frac{n}{4}} = \frac1{n} (2\pi)^{-n} \ \int_M \tr_E\big[u(x)\big] \sqrt{g(x)}dx,
 \label{eq:Weyl.Integration-formula}\\
 \bint  \left|\Delta_E^{-\frac{n}{4}}u\Delta_E^{-\frac{n}{4}}\right|= \lim_{j\rightarrow \infty} \mu_j \left(\Delta_E^{-\frac{n}{4}}u\Delta_E^{-\frac{n}{4}}\right)=
  \frac1{n} (2\pi)^{-n} \ \int_M \tr_E\big[|u(x)|\big] \sqrt{g(x)}dx. 
\end{gather}

\item If $u(x)^*=u(x)$, then 
\begin{equation}
  \lim_{j\rightarrow \infty} j\lambda^\pm_j\left(\Delta_E^{-\frac{n}{4}}u\Delta_E^{-\frac{n}{4}}\right)= \frac1{n} (2\pi)^{-n}  \int_M  \tr_E\big[u(x)_\pm\big] \sqrt{g(x)}dx.
  \label{eq:Weyl.Weyllaw} 
\end{equation}
\end{enumerate}
\end{corollary}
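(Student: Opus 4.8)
The plan is to deduce this corollary directly from Theorem~\ref{thm:Weyl.strong-CTT} by identifying the operator $P=\Delta_E^{-n/4}u\Delta_E^{-n/4}$ as a \psido\ of order~$-n$ and computing its principal symbol. First I would note that $\Delta_E$ is a positive elliptic second-order operator (modulo a finite-dimensional kernel, which one removes by replacing $\Delta_E$ with $\Delta_E+\Pi_0$ where $\Pi_0$ is the orthogonal projection onto $\ker\Delta_E$; this modification changes $P$ only by a smoothing operator and hence by an element of $(\sL_{1,\infty})_0$, which by Remark~\ref{rmk:Bir-Sol.lunf0} affects neither the Weyl operator property nor the values of $\Lambda^\pm$). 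By standard elliptic \psido\ calculus, $\Delta_E^{-n/4}$ is a classical \psido\ of order~$-n/2$ with principal symbol $|\xi|^{-n/2}\op{id}_{E_x}$, so $P=\Delta_E^{-n/4}u\Delta_E^{-n/4}\in\Psi^{-n}(M,E)$ with principal symbol
\begin{equation*}
 \sigma(P)(x,\xi)=|\xi|^{-n/2}\, u(x)\, |\xi|^{-n/2}= |\xi|^{-n}u(x).
\end{equation*}
In particular, on the cosphere bundle $S^*M=\{|\xi|=1\}$ we have $\sigma(P)(x,\xi)=u(x)$, independent of $\xi$.

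Next I would apply Theorem~\ref{thm:Weyl.strong-CTT} to this $P$. Part~(1) of that theorem gives that $P$ and $|P|$ are Weyl operators in $\sL_{1,\infty}$, hence strongly measurable, which is exactly part~(1) of the corollary. For part~(2), the integral formula of Theorem~\ref{thm:Weyl.strong-CTT} yields
\begin{equation*}
 \bint P= \frac1n(2\pi)^{-n}\int_{S^*M}\tr_E\big[\sigma(P)(x,\xi)\big]\,dxd\xi
 = \frac1n(2\pi)^{-n}\int_{S^*M}\tr_E\big[u(x)\big]\,dxd\xi.
\end{equation*}
Since the integrand no longer depends on $\xi$, the fibre integral $\int_{|\xi|=1}d\xi$ contributes the factor $|\bS^{n-1}|$ times the Riemannian volume density $\sqrt{g(x)}\,dx$ (the Liouville measure on $S^*M$ disintegrates over $M$ as $\sqrt{g(x)}\,dx$ times the round measure on the unit cosphere of volume $|\bS^{n-1}|$). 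This gives~(\ref{eq:Weyl.Integration-formula}) after absorbing $|\bS^{n-1}|$; one should double-check the normalization convention so that the constant matches $\frac1n(2\pi)^{-n}|\bS^{n-1}|$ as in~(\ref{eq:NCInt.Connes-int-formula}), i.e.\ the statement as written has already absorbed $|\bS^{n-1}|$ into the volume form or there is a typo to reconcile — this is a bookkeeping point, not a conceptual one. The formula for $\bint|P|$ follows identically from~(\ref{eq:Weyl.Bir-Sol-mu-lunf}), using $|\sigma(P)(x,\xi)|=|\xi|^{-n}|u(x)|$ which equals $|u(x)|$ on $S^*M$, together with the identification $\bint|P|=\lim_j j\mu_j(P)$ from Corollary~\ref{cor:Bir-Sol.Weyl-mesurable-|A|}.

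For part~(3), when $u(x)^*=u(x)$ the operator $P$ is selfadjoint, so~(\ref{eq:Weyl.Bir-Sol-selfad-lunf}) of Theorem~\ref{thm:Weyl.strong-CTT} gives $\lim_j j\lambda_j^\pm(P)=\frac1n(2\pi)^{-n}\int_{S^*M}\tr_E[\sigma(P)(x,\xi)_\pm]\,dxd\xi$, and since $\sigma(P)(x,\xi)_\pm=(|\xi|^{-n}u(x))_\pm=|\xi|^{-n}u(x)_\pm$ equals $u(x)_\pm$ on $S^*M$, the same fibre-integration step produces~(\ref{eq:Weyl.Weyllaw}). The only genuinely substantive inputs are Theorem~\ref{thm:Weyl.strong-CTT} (which is itself Theorem~\ref{thm:Bir-Sol-asymp} combined with Proposition~\ref{prop:Bir-Sol.Weyl-mesurable} and its corollary) and the elementary \psido\ fact that complex powers of a positive elliptic operator are classical \psidos\ with the expected principal symbols; I expect no real obstacle, the main thing to be careful about being the smoothing-perturbation argument to handle $\ker\Delta_E$ and the constant-normalization bookkeeping for the Liouville versus Riemannian measures.
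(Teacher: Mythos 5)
Your proposal is correct and follows exactly the route the paper intends: Corollary~\ref{cor:Weyl.strong-CIF-smooth} is presented (without separate proof) as an immediate application of Theorem~\ref{thm:Weyl.strong-CTT} to $P=\Delta_E^{-n/4}u\,\Delta_E^{-n/4}\in\Psi^{-n}(M,E)$ with $\sigma(P)(x,\xi)=|\xi|^{-n}u(x)$, followed by fibrewise integration over $S^*M$, and your smoothing-perturbation handling of $\ker\Delta_E$ (harmless by Corollaries~\ref{cor:Bir-Sol.lunfo-perturbation} and~\ref{cor:Bir-Sol.lunfo-perturbation-sing}) is the standard implicit step. Your bookkeeping flag is also warranted: the fibre integral does produce the factor $|\bS^{n-1}|$, so the constant as printed in~(\ref{eq:Weyl.Integration-formula}) is inconsistent with $c_n=\frac1n(2\pi)^{-n}|\bS^{n-1}|$ in~(\ref{eq:NCInt.Connes-int-formula}); this is a typo in the statement rather than a gap in your argument.
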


\begin{remark}
The above results continue to hold for operators of the form $QuP$, where $P$ and $Q$ are operators in $\Psi^{-n/2}(M,E)$ and $u(x)$ is a potential in the Orlicz class $\LlogL(M, \End(E))$ (for the 3rd part we take $Q=P^*$) (see~\cite{Ro:arXiv21, Po:Weyl-Orlicz}; see also~\cite{SZ:arXiv21}). In particular, the operators $P$ and $Q$ need not be a negative power of an elliptic operator. Rozenblum~\cite{Ro:arXiv21} actually establishes the results in the scalar case for potentials of the form $u=h\mu$, where $h$ is in the Orlicz space $\LlogL(M)$ and $\mu$ is in a suitable class of Borel measures (see also~\cite{RS:EMS21}). 
If in addition, if $E$ is a Clifford module, then we may replace $\Delta_E$ by $\ssD_E^2$, where $\ssD_E$ is the Dirac operator associated to some unitary Clifford connection on $E$.
\end{remark}

\subsection{Proof of Theorem~\ref{thm:Bir-Sol-asymp}}
We will deduce Theorem~\ref{thm:Bir-Sol-asymp} from the properties of zeta functions of elliptic operators and their relationship with the noncommutative residue trace. This will clarify the relationship between Birman-Solomyak's result and the noncommutative residue. More precisely, we shall use the following result. 

\begin{proposition}[\cite{Gu:AIM85, Wo:NCRF}]\label{prop:Weyl.NCR-zeta} 
 Let $Q\in \Psi^{m}(M,E)$, $m>0$, be elliptic and let $A\in \Psi^0(M,E)$. The function $z\rightarrow \Tr[A|Q|^{-z}]$ has a meromorphic extension to $\C$ with at worst simple pole singularities on $\Sigma:=\{km^{-1}; \ k\in \Z, \ k\leq n\}$ in such way that
\begin{equation*}
 \Res_{z=\sigma}  \Tr\big[A|Q|^{-z}\big]= \frac{1}{m} \Res\big(A|Q|^{-\sigma}\big), \qquad \sigma \in \Sigma. 
\end{equation*}
\end{proposition}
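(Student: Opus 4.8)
The plan is to deduce the proposition from the local structure of Seeley's complex powers together with the elementary observation that a zeta-type integral of a homogeneous symbol has a pole dictated by its degree. First I would recall that, $Q$ being elliptic of positive order $m$, the operator $Q^*Q$ is a non-negative elliptic $\Psi$DO of order $2m$ whose kernel coincides with $\ker Q$ and is finite-dimensional; replacing $Q^*Q$ by $Q^*Q+\Pi_0$, where $\Pi_0$ is the orthogonal projection onto $\ker Q$, is a smoothing modification that changes none of the residues below, so I may and will assume that $|Q|:=(Q^*Q)^{1/2}$ is a positive invertible classical $\Psi$DO of order $m$ with positive principal symbol. By Seeley's theorem the complex powers $|Q|^{-z}$ form a holomorphic family of classical $\Psi$DOs of order $-mz$; hence $A|Q|^{-z}$ is a holomorphic family of order $-mz$, trace class for $\Re z>n/m$, and there $z\mapsto\Tr[A|Q|^{-z}]$ is holomorphic.

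Next I would write $\Tr[A|Q|^{-z}]=\int_M\tr_E[K_{A|Q|^{-z}}(x,x)]\,dx$ and, using a partition of unity, compute the diagonal kernel in local coordinates as $K_{A|Q|^{-z}}(x,x)=(2\pi)^{-n}\int\sigma(A|Q|^{-z})(x,\xi)\,d\xi$. Seeley's construction gives a classical expansion $\sigma(A|Q|^{-z})(x,\xi)\sim\sum_{j\ge0}b_{-mz-j}(x,\xi;z)$, with $b_{-mz-j}(x,\,\cdot\,;z)$ positively homogeneous of degree $-mz-j$ for $|\xi|\ge1$ and holomorphic in $z$. Splitting $\int d\xi=\int_{|\xi|\le1}+\int_{|\xi|\ge1}$, the first piece is entire in $z$; for the second, truncating the expansion at order $N$ leaves a remainder that is holomorphic for $\Re z>(n-N)/m$, while for each $j<N$ polar coordinates give $\int_{|\xi|\ge1}b_{-mz-j}(x,\xi;z)\,d\xi=\frac{1}{m(z-(n-j)/m)}\int_{|\omega|=1}b_{-mz-j}(x,\omega;z)\,d\omega$, which has at worst a simple pole at $z=(n-j)/m$. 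Letting $N\to\infty$ yields the meromorphic continuation, with poles contained in $\{(n-j)/m:j\ge0\}=\{k/m:k\in\Z,\ k\le n\}=\Sigma$, all simple.

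Finally I would read off the residue at $\sigma=(n-j)/m\in\Sigma$: there $-mz-j=-n$, so $\Res_{z=\sigma}K_{A|Q|^{-z}}(x,x)=\frac{1}{m}(2\pi)^{-n}\int_{|\omega|=1}b_{-n}(x,\omega;\sigma)\,d\omega$. Since $-m\sigma=j-n\in\Z$, the operator $A|Q|^{-\sigma}$ lies in $\Psi^\Z(M,E)$, and evaluating Seeley's holomorphic symbol family at $z=\sigma$ identifies $b_{-n}(x,\,\cdot\,;\sigma)$ with the symbol of degree $-n$ of $A|Q|^{-\sigma}$; hence the residue density equals $\frac{1}{m}c_{A|Q|^{-\sigma}}(x)$ in the notation of Section~\ref{subsec.PsiDOs}. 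Integrating over $M$ and applying $\tr_E$ gives $\Res_{z=\sigma}\Tr[A|Q|^{-z}]=\frac{1}{m}\Res(A|Q|^{-\sigma})$. The main obstacle is the bookkeeping of the continuation: checking that the truncation remainders are genuinely holomorphic in the stated half-planes, that one may interchange the $z$-residue with $\int_M\tr_E$, and that the residue density — defined a priori through a non-invariant full symbol — is coordinate-independent (it is, being the residue of the invariantly defined diagonal kernel, and it coincides with Wodzicki's local density $c_P$). An alternative that absorbs part of this into standard machinery is to run the same computation through the heat or resolvent expansion of $|Q|$.
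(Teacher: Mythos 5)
Your proposal is correct: the paper itself states Proposition~\ref{prop:Weyl.NCR-zeta} without proof, citing \cite{Gu:AIM85, Wo:NCRF}, and your argument via Seeley's complex powers \cite{Se:PSPM67} --- reduction to an invertible $|Q|$ by the smoothing correction $\Pi_0$, the holomorphic classical symbol expansion of $A|Q|^{-z}$, the term-by-term radial integration producing simple poles at $z=(n-j)/m$, and the identification of the residue density with $\frac1m c_{A|Q|^{-\sigma}}(x)$ --- is exactly the standard proof behind those references. The remaining points you flag (holomorphy of the truncation remainders with locally uniform estimates, interchanging the residue with $\int_M\tr_E$, coordinate invariance of the residue density) are indeed the only bookkeeping needed and are supplied by Seeley's construction, so there is no gap.
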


As is well known, the above result implies the following Weyl's laws. 

\begin{corollary}\label{cor:Weyl.Weyl-invere-elliptic} 
 Let $Q\in \Psi^{m}(M,E)$, $m>0$, be elliptic. Set $p=nm^{-1}$. 
 \begin{enumerate}
 \item We have
\begin{equation}
 \lim_{j\rightarrow \infty} j^{\frac1{p}}\mu_j\big(Q^{-1}\big)= \bigg[ \frac1{n} (2\pi)^{-n} \int_{S^*M} \tr_E\big[ |\sigma(Q)(x,\xi)|^{-p} \big] dx d\xi \bigg]^{\frac1{p}}.
 \label{eq:Weyl.Weyl-|Q|}  
\end{equation}

\item If $Q$ is selfadjoint, then 
\begin{equation}
 \lim_{j\rightarrow \infty} j^{\frac1p} \lambda^\pm_j\big(Q^{-1}\big)= \bigg[ \frac1{n} (2\pi)^{-n} \int_{S^*M} \tr_E\big[ \sigma(Q)(x,\xi)_\pm^{-p} \big] dx d\xi \bigg]^{\frac1{p}}.
 \label{eq:Weyl.Weyl-Qpm}
\end{equation}
\end{enumerate}
\end{corollary}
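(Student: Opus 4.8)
The plan is to derive both Weyl's laws from the meromorphic continuation of the zeta function in Proposition~\ref{prop:Weyl.NCR-zeta} together with a single Tauberian argument. We may assume throughout that $Q$ is invertible: otherwise we replace $Q$ by $Q+\Pi_0$, where $\Pi_0$ is the orthogonal projection onto the finite-dimensional kernel of $Q$; this is a finite-rank, hence smoothing, perturbation which leaves the principal symbol unchanged and, by Corollary~\ref{cor:Bir-Sol.lunfo-perturbation} and Corollary~\ref{cor:Bir-Sol.lunfo-perturbation-sing}, does not affect the asymptotics in~\eqref{eq:Weyl.Weyl-|Q|}--\eqref{eq:Weyl.Weyl-Qpm}. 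Then $Q^{-1}\in \sL_{p,\infty}$.

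\emph{The positive case.} Suppose first that $\tilde{Q}=\tilde{Q}^*>0$ is elliptic of order $\tilde{m}>0$ and set $\tilde{p}=n\tilde{m}^{-1}$. Then $\tilde{Q}^{-1}\geq 0$ and $\zeta(z):=\Tr[\tilde{Q}^{-z}]=\sum_{j\geq 0}\mu_j(\tilde{Q}^{-1})^{z}$ converges for $\Re z>\tilde{p}$. By Proposition~\ref{prop:Weyl.NCR-zeta} (with $A=\id$) the function $\zeta$ extends meromorphically to $\C$ with poles only in $\{k\tilde{m}^{-1};\ k\leq n\}$, the largest of which is $z=\tilde{p}$; there the pole is simple with residue
\[
r:=\Res_{z=\tilde{p}}\zeta(z)=\frac{1}{\tilde{m}}\Res\big(\tilde{Q}^{-\tilde{p}}\big)=\frac{1}{\tilde{m}}(2\pi)^{-n}\int_{S^*M}\tr_E\big[|\sigma(\tilde{Q})(x,\xi)|^{-\tilde{p}}\big]dxd\xi,
\]
using that $\tilde{Q}^{-\tilde{p}}\in \Psi^{-n}(M,E)$ has principal symbol $\sigma(\tilde{Q})^{-\tilde{p}}=|\sigma(\tilde{Q})|^{-\tilde{p}}$. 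Since $\tilde{p}$ is the largest pole, $\zeta(z)-r(z-\tilde{p})^{-1}$ is holomorphic in a neighbourhood of $\{\Re z\geq\tilde{p}\}$, so the Wiener--Ikehara Tauberian theorem gives $\#\{j;\ \mu_j(\tilde{Q}^{-1})>\lambda\}\sim \tilde{p}^{-1}r\,\lambda^{-\tilde{p}}$ as $\lambda\to 0^+$; for a non-increasing sequence this is equivalent to $j^{1/\tilde{p}}\mu_j(\tilde{Q}^{-1})\to (r/\tilde{p})^{1/\tilde{p}}$. As $\tilde{m}\tilde{p}=n$, this is exactly~\eqref{eq:Weyl.Weyl-|Q|} for $\tilde{Q}$.

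\emph{Part (1) in general, and part (2).} For arbitrary elliptic $Q\in\Psi^m(M,E)$, apply the positive case to $\tilde{Q}=Q^*Q\in\Psi^{2m}(M,E)$ (so $\tilde{p}=p/2$): since $\lambda_j\big((Q^*Q)^{-1}\big)=\mu_j(Q^{-1})^2$ and $\sigma(Q^*Q)^{-p/2}=(\sigma(Q)^*\sigma(Q))^{-p/2}=|\sigma(Q)|^{-p}$, taking square roots yields~\eqref{eq:Weyl.Weyl-|Q|}. For~\eqref{eq:Weyl.Weyl-Qpm}, let $Q=Q^*$; then $|Q|=(Q^2)^{1/2}\in\Psi^m(M,E)$, the sign operator $Q|Q|^{-1}\in\Psi^0(M,E)$ has principal symbol $\sgn(\sigma(Q)(x,\xi))$, and the spectral projections $A_\pm:=\tfrac12(\id\pm Q|Q|^{-1})\in\Psi^0(M,E)$ have principal symbols the orthogonal projections onto the positive/negative eigenspaces of $\sigma(Q)(x,\xi)$. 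Since $A_\pm$ commutes with $Q$ and $A_\pm^2=A_\pm$, the operator $Q^{-1}A_\pm$ is positive with $\mu_j(Q^{-1}A_\pm)=\lambda_j^\pm(Q^{-1})$ and $(Q^{-1}A_\pm)^z=|Q|^{-z}A_\pm$, so by Proposition~\ref{prop:Weyl.NCR-zeta} (with $A=A_\pm$) the function $\Tr[A_\pm|Q|^{-z}]=\sum_j\lambda_j^\pm(Q^{-1})^z$ has a simple pole at $z=p$ with residue $\tfrac1m\Res(A_\pm|Q|^{-p})=\tfrac1m(2\pi)^{-n}\int_{S^*M}\tr_E[\sigma(A_\pm)|\sigma(Q)|^{-p}]dxd\xi$. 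As $\sigma(A_\pm)(x,\xi)|\sigma(Q)(x,\xi)|^{-p}=\sigma(Q)(x,\xi)_\pm^{-p}$, the same Tauberian argument gives~\eqref{eq:Weyl.Weyl-Qpm}.

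\emph{Main obstacle.} The only genuinely analytic ingredient is the Tauberian step, i.e.\ passing from a \emph{simple pole} of the zeta function at $z=\tilde{p}$ to the \emph{first-order} asymptotics $j^{1/\tilde{p}}\mu_j\to(r/\tilde{p})^{1/\tilde{p}}$. This is precisely the Wiener--Ikehara theorem, applicable because Proposition~\ref{prop:Weyl.NCR-zeta} furnishes holomorphic continuation strictly past the critical line (the next pole sits at $\Re z=(n-1)/\tilde{m}<\tilde{p}$). The remaining points are routine bookkeeping: that $Q|Q|^{-1}\in\Psi^0(M,E)$ (Seeley's theorem on complex powers), the pointwise identity $\sigma(A_\pm)|\sigma(Q)|^{-p}=\sigma(Q)_\pm^{-p}$, and the disposal of $\ker Q$ via a finite-rank perturbation.
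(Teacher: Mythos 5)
Your proposal is correct and is essentially the paper's own proof: both rest on Proposition~\ref{prop:Weyl.NCR-zeta} applied with $A=1$ for part (1) and with the spectral projections $\Pi_\pm(Q)$ for part (2), followed by Ikehara's Tauberian theorem, the only cosmetic differences being that you route part (1) through $Q^*Q$ and a square root instead of working with $|Q|$ directly, and that you dispose of $\ker Q$ by a finite-rank perturbation at the outset rather than carrying the correction term $-\Pi_0(Q)$ inside the projections. (A trivial quibble: $Q^{-1}A_-$ is negative rather than positive, but the identity you actually use, $\Tr\big[A_\pm|Q|^{-z}\big]=\sum_j\lambda_j^\pm\big(Q^{-1}\big)^z$, is correct, so nothing is affected.)
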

\begin{proof}
 The first part is a mere restatement of the Weyl's law for $|Q|$. Namely, Proposition~\ref{prop:Weyl.NCR-zeta} for $A=1$ implies that the function 
 $\Tr[|Q|^{-s}]=\sum_{j\geq 0} \mu_j(|Q|^{-1})^{s}$ has a meromorphic extension to the half-space $\Re s> p-1/m$ with a single pole at $s=p$ such that
 \begin{equation*}
 \frac{1}{m}\Res\big(|Q|^{-p}\big)= \frac1{m} (2\pi)^{-n} \int_{S^*M} \tr_E\big[ |\sigma(Q)(x,\xi)|^{-p} \big] dx d\xi.
\end{equation*}
By using Ikehara's Tauberian theorem (see, e.g., \cite{Sh:Springer01}) we then obtain the Weyl's law~(\ref{eq:Weyl.Weyl-|Q|}). 

Suppose now that $Q$ is selfadjoint. Let $\Pi_0(Q)$ be the orthogonal projections onto $\ker Q$ and $\Pi_{\pm}(Q)$ the orthogonal projections onto the positive and negative eigenspaces of $Q$. Here $\Pi_0(Q)$ is a smoothing operator, and $\Pi_\pm(Q)$ are \psidos\ of order~$\leq 0$, since 
\begin{equation*}
 \Pi_\pm(Q)=\frac12\big(1-\Pi_0(Q)\pm Q|Q|^{-1}\big). 
\end{equation*}
In particular, $\sigma(\Pi_\pm(Q))= \frac12(1\pm \sigma(Q)\sigma(|Q|^{-1}))=\Pi_{\pm}(\sigma(Q))$. Therefore, Proposition~\ref{prop:Weyl.NCR-zeta}  for $A=\Pi_{\pm}(Q)$ shows that the function $\Tr[\Pi_{\pm}(Q)|Q|^{-s}]=\sum_{j\geq 0} \lambda_j^{\pm}(Q^{-1})^s$ has a meromorphic extension to the half-space $\Re s> p-1/m$ with a single pole at $s=p$ such that
 \begin{align*}
 \frac{1}{m}\Res\big(\Pi_\pm(Q)|Q|^{-p}\big) &= \frac1{m} (2\pi)^{-n} \int_{S^*M} \tr_E\big[ \Pi_{\pm}(\sigma(Q))|\sigma(Q)(x,\xi)|^{-p} \big] dx d\xi \\
 & = \frac1{m} (2\pi)^{-n} \int_{S^*M} \tr_E\big[ \sigma(Q)(x,\xi)_\pm^{-p} \big] dx d\xi .
\end{align*}
As above, using Ikehara's Tauberian theorem gives~(\ref{eq:Weyl.Weyl-Qpm}). The proof is complete. 
\end{proof}

We will also need the following special case of the Birman-Koplienko-Solomyak inequality.  

\begin{lemma}[{\cite[Theorem 3]{BKS:IVUZM75}}; see also~{\cite[Proposition~4.9]{BS:JSM92}}] \label{lem:Weyl.BKS}
Let $A$ and $B$ be non-negative selfadjoint operators on $\sH$ such that $A-B\in \sL_{q,\infty}$, $q>0$. Then 
$\sqrt{A}-\sqrt{B}\in \sL_{2q,\infty}$, and  
\begin{equation*}
 \big\| \sqrt{A}-\sqrt{B}\big\|_{2q,\infty} \leq C_q \sqrt{\|A-B\|_{q,\infty}},
\end{equation*}
 where the constant $C_q$ depends only on $q$. 
\end{lemma}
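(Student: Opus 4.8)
The plan is to symmetrize so as to reduce to a comparable pair of operators, then run the classical integral representation of the square root, and to fall back on the Birman--Koplienko--Solomyak estimate itself in the one range of $q$ where the elementary argument is lossy.

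\textbf{Step 1 (symmetrization).} First I would put $W:=A+(A-B)^{-}=B+(A-B)^{+}$, where $(A-B)^{\pm}=\tfrac12(|A-B|\pm(A-B))$. Then $W\geq 0$, $W\geq A$, $W\geq B$, with $W-A=(A-B)^{-}$ and $W-B=(A-B)^{+}$; and since $0\leq (A-B)^{\pm}\leq |A-B|$, the min--max principle~(\ref{eq:min-max}) gives $\mu_j\big((A-B)^{\pm}\big)\leq \mu_j(A-B)$, so $W-A,W-B\in\sL_{q,\infty}$ with quasi-norm~(\ref{def:lp_infty_quasinorm}) at most $\|A-B\|_{q,\infty}$. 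Writing $\sqrt A-\sqrt B=(\sqrt W-\sqrt B)-(\sqrt W-\sqrt A)$, with both terms $\geq 0$ by operator monotonicity of $t\mapsto\sqrt t$, Ky Fan's inequality~(\ref{eq:Quantized.properties-mun2}) gives $\mu_{2j}(\sqrt A-\sqrt B)\leq \mu_j(\sqrt W-\sqrt A)+\mu_j(\sqrt W-\sqrt B)$. So it is enough to prove the lemma for an \emph{ordered} pair $X\geq Y\geq 0$ with $X-Y\in\sL_{q,\infty}$; the general statement then follows by a routine index-doubling argument that enlarges $C_q$ only by a factor depending on $q$.

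\textbf{Step 2 (the ordered case).} For $X\geq Y\geq 0$ I would use $\sqrt t=\tfrac1\pi\int_0^\infty\tfrac{t}{t+\lambda}\tfrac{d\lambda}{\sqrt\lambda}$ to get the norm-convergent representation $\sqrt X-\sqrt Y=\tfrac1\pi\int_0^\infty\lambda^{1/2}\big[(Y+\lambda)^{-1}-(X+\lambda)^{-1}\big]\,d\lambda$, noting that $(Y+\lambda)^{-1}-(X+\lambda)^{-1}=(X+\lambda)^{-1}(X-Y)(Y+\lambda)^{-1}\geq 0$. The integrand obeys two bounds: $0\leq (Y+\lambda)^{-1}-(X+\lambda)^{-1}\leq(Y+\lambda)^{-1}\leq\lambda^{-1}$ in operator norm, and $\mu_j\big((Y+\lambda)^{-1}-(X+\lambda)^{-1}\big)\leq\lambda^{-2}\mu_j(X-Y)$ by~(\ref{eq:Quantized.properties-mun3}). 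I would split $\int_0^\infty=\int_0^\delta+\int_\delta^\infty$ with $\delta$ depending on the eigenvalue index: the $[0,\delta]$-piece is $\op{O}(\sqrt\delta)$ in operator norm, while the $[\delta,\infty)$-piece is controlled, via Minkowski's inequality in the (unitarily invariant) Ky Fan $m$-norms together with the singular-value bound, by $\op{O}\big(\delta^{-1/2}\sum_{j<m}\mu_j(X-Y)\big)$ in the $m$-norm. Combining these through~(\ref{eq:Quantized.properties-mun2}), inserting $\mu_j(X-Y)\leq(j+1)^{-1/q}\|X-Y\|_{q,\infty}$, and optimizing over $\delta$ should give $\mu_j(\sqrt X-\sqrt Y)\leq C_q\,(j+1)^{-1/(2q)}\|X-Y\|_{q,\infty}^{1/2}$, i.e.\ the asserted estimate.

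\textbf{The hard part.} The optimization in Step 2 closes cleanly for $q>1$ (where $\sum_{j<m}\mu_j(X-Y)=\op{O}(m^{1-1/q}\|X-Y\|_{q,\infty})$ and $\sL_{2q,\infty}$ is moreover a Banach ideal), but for $0<q\leq 1$ the passage from the Ky-Fan-norm bound on the tail to a pointwise eigenvalue bound costs a logarithmic (at $q=1$) or polynomial (at $q<1$) factor that the optimization does not absorb --- the concrete sign that $t\mapsto\sqrt t$ fails to be operator-Lipschitz near the origin. Recovering the sharp decay there requires the finer multiscale argument of Birman--Koplienko--Solomyak, splitting $X-Y$ into dyadic spectral blocks, each treated with its own threshold. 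Since this is exactly the content of~\cite[Theorem~3]{BKS:IVUZM75} (the weak-Schatten form being recorded in~\cite[Proposition~4.9]{BS:JSM92}), the most economical finish --- and the one I would adopt --- is to quote their inequality for the ordered pair $X\geq Y\geq 0$ and then apply the symmetrization of Step 1.
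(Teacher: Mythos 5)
You should note that the paper gives no proof of this lemma at all: it is imported by citation from Birman--Koplienko--Solomyak (\cite[Theorem~3]{BKS:IVUZM75}; see also \cite[Proposition~4.9]{BS:JSM92}), and since your argument ultimately closes by quoting that same theorem, your treatment is in substance the same as the paper's. Your supplementary work is sound as far as it goes --- the symmetrization $W=A+(A-B)^{-}=B+(A-B)^{+}$ with the index-doubling via Ky Fan is correct, and your diagnosis that the resolvent-integral argument only yields the sharp bound for $q>1$ (with a logarithmic loss at $q=1$ and a polynomial loss below, which genuinely matters here since the application in Section~\ref{sec:Bir-Sol-PDOs} uses $q=p/2$ with $p>0$ arbitrary) is accurate --- but since the quoted BKS inequality already covers arbitrary non-negative pairs, the reduction to the ordered case becomes redundant once you invoke it.
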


We are now in a position to prove Theorem~\ref{thm:Bir-Sol-asymp}. 

\begin{proof}[Proof of Theorem~\ref{thm:Bir-Sol-asymp}] 
 Let $P\in \Psi^m(M,E)$, $m>0$, and set $p=nm^{-1}$. Throughout this proof we let $\Delta_E=\nabla^*\nabla$ be the Laplacian of some Hermitian connection on $E$. In particular, $\sigma(\Delta_E)=|\xi|^2\op{id}_{E_x}$ and $\Delta_E^{-m}\in \Psi^{-2m}(M,E)$. Given $\epsilon>0$ set
 \begin{equation*}
 A_\epsilon =\sqrt{P^*P+\epsilon^2\Delta_E^{-m}}. 
\end{equation*}
Here $A_\epsilon^2-P^*P-=\epsilon^2\Delta_E^{-m}\in \sL_{p/2,\infty}$. Therefore, by Lemma~\ref{lem:Weyl.BKS} the difference $A_\epsilon-|P|$ is in $\sL_{p,\infty}$, and we have
\begin{equation*}
 \big\| A_\epsilon-|P|\big\|_{p,\infty} \leq C_p\epsilon \sqrt{\big\|\Delta_E^{-m}\big\|_{p/2,\infty}},
\end{equation*}
 where the constant $C_p$ does not depend on $\epsilon$. Thus,
\begin{equation}
 A_\epsilon \longrightarrow |P| \qquad \text{in $\sL_{p,\infty}$ as $\epsilon \rightarrow 0$}.
 \label{eq:Weyl.Aeps-|P|} 
\end{equation}

Let $Q_\epsilon\in \Psi^m(M,E)$ have principal symbol $\big( |\sigma(P)(x,\xi)|^2+\epsilon^2 |\xi|^{-2m}\big)^{-1/2}=\sigma(A_\epsilon^2)^{-1/2}$. In particular,  $Q_\epsilon$ is an elliptic operator. Thus, by Corollary~\ref{cor:Weyl.Weyl-invere-elliptic}  the inverse absolute value $|Q_\epsilon|^{-1}$ is a Weyl operator in $\sL_{p,\infty}$, and we have
\begin{align*}
 \Lambda\big(|Q_\epsilon|^{-1}\big) & = \bigg[ \frac1{n} (2\pi)^{-n} \int_{S^*M} \tr_E\big[\big( |\sigma(P)(x,\xi)|^2+\epsilon^2 |\xi|^{-2m}\big)^{\frac{p}{2}}\big] dxd\xi \bigg]^{\frac1{p}}\\
 & \xrightarrow[\epsilon \to 0]{~}    \bigg[  \frac1{n} (2\pi)^{-n}\int_{S^*M} \tr_E\big[ |\sigma(P)(x,\xi)|^p\big] dxd\xi \bigg]^{\frac1{p}}.
 \end{align*}

By construction $\sigma(|Q_\epsilon|^{-2})=\sigma(A_\epsilon^2)$, so $A_\epsilon^2-|Q_\epsilon|^{-2}$ is an operator in $\Psi^{-2m-1}(M,E)$, and hence is in the weak Schatten class $\sL_{q/2,\infty}$ with $q=2n(2m+1)^{-1}<p$. Lemma~\ref{lem:Weyl.BKS} then ensures us that $A_\epsilon -|Q_\epsilon|^{-1}$ is in the weak Schatten class $\sL_{q,\infty}$, and hence is contained in $(\sL_{p,\infty})_0$. It then follows from Corollary~\ref{cor:Bir-Sol.lunfo-perturbation-sing} that $A_\epsilon$ is a Weyl operator in $\sL_{p,\infty}$, and we have
\begin{equation*}
\Lambda(A_\epsilon)=  \Lambda\big(|Q_\epsilon|^{-1}\big)\xrightarrow[\epsilon \to 0]{~}  \bigg[ \frac1{n} (2\pi)^{-n} \int_{S^*M} \tr_E\big[ |\sigma(P)(x,\xi)|^p\big] dxd\xi \bigg]^{\frac1{p}}. 
 \end{equation*}
Combining this with  Corollary~\ref{cor:Bir-Sol.closedness-sing} and~(\ref{eq:Weyl.Aeps-|P|}) then shows that $|P|$ is a Weyl operator in $\sL_{p,\infty}$, and we have
\begin{equation*}
 \Lambda(|P|)= \lim_{\epsilon \rightarrow 0} \Lambda(A_\epsilon)= \bigg[ \frac1{n} (2\pi)^{-n} \int_{S^*M} \tr_E\big[ |\sigma(P)(x,\xi)|^p\big] dxd\xi \bigg]^{\frac1{p}}. 
\end{equation*}

Suppose now that $P^*=P$. Set $B_\epsilon =\frac12(A_\epsilon +P)$. It follows from~(\ref{eq:Weyl.Aeps-|P|}) that
\begin{equation}
 B_\epsilon \xrightarrow[\epsilon \to 0]{~} \frac12(|P|+P)=P_+ \qquad \text{in $\sL_{p,\infty}$}.
 \label{eq:Weyl.Beps-P+} 
\end{equation}
In addition, let $\tilde{Q}_\epsilon\in \Psi^m(M,E)$ be selfadjoint and have principal symbol
\begin{equation*}
 \left(\frac12 \sqrt{|\sigma(P)(x,\xi)|^2+\epsilon^2|\xi|^{-2m}}+\frac12\sigma(P)(x,\xi)\right)^{-1}= 
 \left( \frac12\sigma\big(|Q_\epsilon|^{-1}\big)+\frac12\sigma(P)(x,\xi)\right)^{-1}. 
\end{equation*}
As $\tilde{Q}_\epsilon$ is elliptic, Corollary~\ref{cor:Weyl.Weyl-invere-elliptic}  ensures that $\tilde{Q}_\epsilon^{-1}$ is a Weyl operator in $\sL_{p,\infty}$, and we have
\begin{equation*}
 \Lambda^+\big(\tilde{Q}^{-1}_\epsilon\big)  =\bigg[ \frac1{n} (2\pi)^{-n} \int_{S^*M}
  \tr_E\bigg[\bigg( \frac12 \sqrt{|\sigma(P)(x,\xi)|^2+\epsilon^2|\xi|^{-2m}}+\frac12 \sigma(P)(x,\xi)\bigg)^{p}\bigg] dxd\xi \bigg]^{\frac1{p}}. 
\end{equation*}
In particular, 
\begin{align*}
 \lim_{\epsilon \rightarrow 0}  \Lambda^+\big(\tilde{Q}^{-1}_\epsilon\big) & =\bigg[  \frac1{n} (2\pi)^{-n}\int_{S^*M}
  \tr_E\left[\left(\frac12 |\sigma(P)(x,\xi)|+\frac12\sigma(P)(x,\xi)\right)^{p}\right] dxd\xi \bigg]^{\frac1{p}}\\ 
& = \bigg[ \frac1{n} (2\pi)^{-n} \int_{S^*M} \tr_E\left[\sigma(P)(x,\xi)_+^{p}\right]dx d\xi \bigg]^{\frac1{p}}. 
\end{align*}

By construction, $\sigma(\tilde{Q}_\epsilon^{-1})=\frac{1}{2}(\sigma(|Q_\epsilon|^{-1}) +\sigma(P))$. Thus, $\tilde{Q}_\epsilon^{-1}-\frac{1}{2}(|Q_\epsilon|^{-1}+P)$ is a \psido\ of order~$\leq -(m+1)$, and hence is contained in $\sL_{\tilde{q},\infty}$ with $\tilde{q}=n(m+1)^{-1}$. As $\tilde{q}<p$, this implies that $\tilde{Q}_\epsilon^{-1}-\frac{1}{2}(|Q_\epsilon|^{-1}+P)$ is contained in $(\sL_{p,\infty})_0$. We know that $|Q_\epsilon|^{-1}-A_\epsilon$ is in $(\sL_{p,\infty})_0$ as well. Thus, $\tilde{Q}_\epsilon$ agrees with $\frac12(A_\epsilon+P)=B_\epsilon$ up to an operator in $(\sL_{p,\infty})_0$. It then follows from Corollary~\ref{cor:Bir-Sol.lunfo-perturbation}  that $B_\epsilon$ is a Weyl operator in $\sL_{p,\infty}$, and we have
\begin{equation*}
 \Lambda^+(B_\epsilon)=\Lambda^+\big(\tilde{Q}_\epsilon^{-1}\big)\xrightarrow[\epsilon \to 0]{~}  \bigg[  \frac1{n} (2\pi)^{-n}\int_{S^*M} \tr_E\left[\sigma(P)(x,\xi)_+^{p}\right]dx d\xi \bigg]^{\frac1{p}}. 
\end{equation*}
Combining this with~(\ref{eq:Weyl.Beps-P+}) and using Proposition~\ref{thm:Bir-Sol-asymp} shows that $P_+$ is a 
a Weyl operator in $\sL_{p,\infty}$, and we have
\begin{equation*}
\Lambda(P_+)= \lim_{\epsilon \to 0} \Lambda^+(B_\epsilon)= \bigg[  \frac1{n} (2\pi)^{-n}\int_{S^*M} \tr_E\left[\sigma(P)(x,\xi)_+^{p}\right]dx d\xi \bigg]^{\frac1{p}}. 
\end{equation*}
Upon replacing $P$ by $-P$ further shows that $P_{-}$ is a Weyl operator in $\sL_{p,\infty}$ and $\Lambda(P_-)$ is given by~(\ref{eq:Weyl.Bir-Sol-selfadjoint}). This shows that $P$ is a Weyl operator in $\sL_{p,\infty}$. The proof of Theorem~\ref{thm:Bir-Sol-asymp} is complete. 
\end{proof}

\begin{remark}
 A similar approach allows us to obtain a Weyl's law for negative order \psidos\ on NC tori (see~\cite{Po:Weyl-NC-tori}). 
\end{remark}

\begin{remark}
 As mentioned in Remark~\ref{rmk.Bir-Sol.non-smooth}, the  original version of Birman-Solomyak's result on $\R^n$ in~\cite{BS:VLU77} was established for  \psidos\ associated with anisotropic symbols. Therefore, we may expect to have a version of Birman-Solomyak's result for the Heisenberg calculus~\cite{BG:CHM, Ta:NCMA} and more generally for the pseudodifferential calculus on filtered manifolds~\cite{Me:Preprint82}. In those settings the pseudodifferential operators are defined in terms of anisotropic symbols. Note that we already have a noncommutative residue trace for the Heisenberg calculus (see~\cite{Po:JFA07}). 
\end{remark}

\section{Connes' Integration and Semiclassical Analysis} \label{sec:SC}
In this section, we look at the relationship between Connes' integration and semiclassical Weyl's laws for abstract Schr\"odinger operators, i.e., we shall answer Question~E. This will follow from the Birman-Schwinger principle. Once again our aim is to stress out how general and simple this relationship is (compare~\cite{MSZ:arXiv21}).

In what follows we let $H$ be a (densely defined) selfadjoint operator on $\sH$ with non-negative spectrum containing $0$. Its quadratic form $Q_H$ has domain $\dom(Q_H)=\dom (H+1)^{\frac12}$. We denote by $\sH_{+}$ the Hilbert space obtained by endowing $\dom(Q_H)$ with the Hilbert space norm, 
\begin{equation*}
 \|\xi\|_{+}=\big(Q_H(\xi,\xi)+\|\xi\|^2\big)^{\frac12}=\big\|(1+H)^{1/2}\xi\big\|, \qquad \xi \in \dom(Q_H). 
\end{equation*}
We also let $\sH_{-}$ be the Hilbert space of continuous \emph{anti-linear} functionals on $\sH_{+}$. Note that we have a continuous inclusion $\iota:\sH\hookrightarrow \sH_{-}$ given by 
\begin{equation*}
 \acou{\iota(\xi)}{\eta}=\scal{\xi}{\eta}, \qquad \xi\in \sH, \ \eta \in \sH_+. 
\end{equation*}
The operator $(H+1)$ is a unitary isomorphism from $\sH_+$ onto $\sH$ with inverse $(H+1)^{-1/2}:\sH\rightarrow \sH_{+}$. By duality we get a unitary isomorphism 
$(H+1)^{-1/2}:\sH_{-}\rightarrow \sH$ such that
\begin{equation*}
 \bigscal{(H+1)^{-1/2}\xi}{\eta}=\bigacou{\xi}{(H+1)^{-1/2}\eta}, \qquad \xi\in \sH_{-}, \ \eta \in \sH.
\end{equation*}

Let $V:\sH_{+}\rightarrow \sH_{-}$ be a bounded operator. We denote by $Q_V$ the corresponding quadratic form with domain $\sH_+$ and given by
\begin{equation*}
 Q_V(\xi,\eta):=\acou{V\xi}{\eta}, \qquad \xi,\eta\in \sH_+. 
\end{equation*}
We assume that $Q_V$ is \emph{symmetric} and \emph{$H$-form compact}. The latter condition means that the operator $V:\sH_{+}\rightarrow \sH_{-}$ is compact, or equivalently, $(H+1)^{-1/2}V(H+1)^{-1/2}$ is a compact operator on $\sH$. 

Our main focus is the operator $H_V:=H+V$. It makes sense as a bounded operator $H_V:\sH_+\rightarrow \sH_-$. As the symmetric quadratic form $Q_V$ is $H$-form compact, it is $H$-form bounded with zero $H$-bound (see~\cite[\S7.8]{Si:AMS15}). Therefore, by the KLMN theorem (see, e.g., \cite{RS2:1975, Sc:Springer12}) the restriction of $H_V$ to $\dom(H_V):=H_V^{-1}(\sH)$ is a bounded from below selfadjoint operator on $\sH$ whose quadratic form is precisely $Q_H+Q_V$. 

It can be further shown that, for all $\lambda \not\in \Sp(H)\cup \Sp(H_V)$ that $H$ and $H_V$ have the same essential spectrum (see, e.g., \cite[Theorem 7.8.4]{Si:AMS15}). Thus, as $H$ has non-negative spectrum, the bottom of the essential spectrum of $H_V$ is~$\geq 0$. 

As $H_{V}$ is bounded from below, we may list its eigenvalues below the essential spectrum as a non-decreasing sequence,  
\begin{equation*}
\lambda_0(H_V)\leq \lambda_1(H_V)\leq\lambda_2(H_V)\leq \cdots ,
\end{equation*}
where each eigenvalue is repeated according to multiplicity. This sequence may be finite or infinite, or even empty. We then introduce the counting function, 
\begin{align}
 N(H_V;\lambda) : = &\#\big\{j; \ \lambda_j(H_V)< \lambda\big\}, 
 \qquad \lambda <\inf  \Sp_{\text{ess}}(H_V). 
 \label{eq:CLR.counting}
\end{align}
We also set $N^{-}(H_V)=N(H_V;0)$. 

Assume further that  $0$ lies in the discrete spectrum of $H$, i.e., $0$ is an isolated eigenvalue of $H$. Thus, the essential spectrum of $H$ is contained in some  interval $[a, \infty)$ with $a>0$. We denote by $H^{-1/2}$ the partial inverse of $H^{1/2}$.  Moreover, as $H$ and $H_V$ have the same essential spectrum, it follows that $H_V$ has at most finitely many non-positive eigenvalues.

The Birman-Schwinger principle was established by Birman~\cite{Bi:AMST66} and Schwinger~\cite{Sc:PNAS61} for Schr\"odinger operators $\Delta+V$ on $\R^n$, $n\geq 3$. Its abstract version~\cite[Lemma~1.4]{BS:AMST89} (see also~\cite[Proposition~7.9]{MP:Part1}) allows us to relate the number of negative eigenvalues of $H_V$ to the counting functions of the Birman-Schwinger operator $H^{-\frac12}VH^{-1/2}$. Note also that the assumptions on $V$ ensure us that  $H^{-\frac12}VH^{-\frac12}$ is  a selfadjoint compact operator. 

\begin{proposition}[Abstract Birman-Schwinger Principle~\cite{BS:AMST89}; see also~\cite{MP:Part1}] Under the above assumptions, we have 
\begin{equation}
 N^{-}\left(H^{-\frac12}VH^{-\frac12};1\right)\leq N^{-}(H_{V}) \leq N^{-}\left(H^{-\frac12}VH^{-\frac12};1\right) + \dim \ker H. 
 \label{eq:SC.Birman-Schwinger}
\end{equation}
\end{proposition}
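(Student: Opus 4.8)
The plan is to deduce the two‑sided bound from the Glazman variational principle for $N^-(H_V)$ together with the substitution $\xi = H^{-1/2}\eta$. Two preliminary observations make that substitution legitimate, and I would record them first. Since $0$ is \emph{isolated} in $\Sp(H)$, there is $\delta>0$ with $\Sp(H)\subset\{0\}\cup[\delta,\infty)$; hence the partial inverse $H^{-1/2}$ is a bounded selfadjoint operator that vanishes on $\ker H$ and is bounded below by $\delta^{-1/2}$ on $(\ker H)^\perp$, and it maps $\sH$ into $\dom(Q_H)$. Moreover $H_V$ and $H$ have the same essential spectrum (recalled above), so $\inf\Sp_{\mathrm{ess}}(H_V)\geq\delta>0$; in particular $N^-(H_V)<\infty$ and Glazman's lemma at the threshold energy $0$ gives $N^-(H_V)=\sup\{\dim L;\ L\subset\dom(Q_H),\ (Q_H+Q_V)[\xi]<0\ \forall\xi\in L\setminus\{0\}\}$. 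Write $K=H^{-1/2}VH^{-1/2}$ for the Birman--Schwinger operator, which is compact and selfadjoint by hypothesis; then $N^-(K;1)$ is finite and equals the largest dimension of a subspace of $(\ker H)^\perp$ on which $1+K$ is negative as a quadratic form.

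For the lower bound I would take $\mathcal{L}\subset(\ker H)^\perp$ to be the span of the $K$‑eigenvectors with eigenvalue $<-1$ and set $L:=H^{-1/2}\mathcal{L}$, a subspace of $(\ker H)^\perp\cap\dom(Q_H)$ of dimension $N^-(K;1)$. For $\xi=H^{-1/2}\eta$ with $\eta\in\mathcal{L}$ one has $H^{1/2}\xi=\eta$, hence $(Q_H+Q_V)[\xi]=\|\eta\|^2+\acou{K\eta}{\eta}=\acou{(1+K)\eta}{\eta}\leq -c\|\xi\|^2$ for some $c>0$, using the lower bound on $H^{-1/2}$ over $(\ker H)^\perp$; thus $L$ is admissible and $N^-(H_V)\geq N^-(K;1)$. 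For the upper bound I would take an arbitrary admissible $L$ and put $L':=L\cap(\ker H)^\perp$. The orthogonal projection onto $\ker H$ sends $L$ onto a subspace of $\ker H$ with kernel $L'$, so $\dim L\leq\dim L'+\dim\ker H$. For $\xi\in L'\setminus\{0\}$ set $\eta:=H^{1/2}\xi\in(\ker H)^\perp$; the map $\xi\mapsto\eta$ is injective on $L'$ because $\ker H^{1/2}=\ker H$, and reversing the previous computation gives $\acou{(1+K)\eta}{\eta}=(Q_H+Q_V)[\xi]<0$. Hence $H^{1/2}(L')$ is a subspace of $(\ker H)^\perp$ of dimension $\dim L'$ on which $1+K$ is negative, so $\dim L'\leq N^-(K;1)$ and $\dim L\leq N^-(K;1)+\dim\ker H$. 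Taking the supremum over $L$ yields $N^-(H_V)\leq N^-(K;1)+\dim\ker H$, which together with the lower bound is \eqref{eq:SC.Birman-Schwinger}.

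The argument is elementary once these reductions are in place, but three points need care and are where the real work sits. First, invoking Glazman's lemma at the energy $0$ is what identifies $N^-(H_V)$ with a genuine eigenvalue count rather than with the bottom of the essential spectrum, and it is legitimate exactly because $\inf\Sp_{\mathrm{ess}}(H_V)\geq 0$, i.e.\ because of the equality of essential spectra. Second, the bijection $\xi\leftrightarrow H^{1/2}\xi$ between $(\ker H)^\perp\cap\dom(Q_H)$ and $(\ker H)^\perp\cap\dom(H^{1/2})$, intertwining $Q_H+Q_V$ with $\acou{(1+K)\,\cdot\,}{\,\cdot\,}$, relies essentially on $0$ being isolated, so that $H^{-1/2}$ is bounded; without a spectral gap one must instead run the classical resolvent form of the principle at a strictly negative energy $\lambda$ and pass to the limit $\lambda\uparrow 0^-$, and this limit — where eigenvalues of $H_V$ may accumulate at $0$ and eigenvalue branches of the $\lambda$‑dependent Birman--Schwinger operator may approach $1$ — is the genuinely delicate ingredient of the general statement handled in \cite[Lemma~1.4]{BS:AMST89}. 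Third, the asymmetry of \eqref{eq:SC.Birman-Schwinger} (no correction on the left, a $\dim\ker H$ correction on the right) reflects precisely that an admissible subspace can drop by at most $\dim\ker H$ dimensions upon restriction to $(\ker H)^\perp$, whereas the trial subspace built from $K$‑eigenvectors already lies inside $(\ker H)^\perp$ and so loses nothing. If one prefers to bypass all of this, once $H^{-1/2}VH^{-1/2}$ is identified as the relevant Birman--Schwinger operator the statement is exactly \cite[Lemma~1.4]{BS:AMST89}.
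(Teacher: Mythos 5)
Your argument is correct, but it is worth pointing out that the paper does not prove this proposition at all: it is quoted from the literature, namely from the abstract Birman--Schwinger principle of Birman--Solomyak~\cite[Lemma~1.4]{BS:AMST89} (see also~\cite[Proposition~7.9]{MP:Part1}), so your proposal supplies a self-contained proof where the paper offers a citation. Your route --- Glazman's variational lemma at the threshold energy $0$ for the form sum $H_V$, combined with the substitution $\xi=H^{-1/2}\eta$ intertwining $Q_H+Q_V$ on $(\ker H)^\perp\cap\dom(Q_H)$ with the form of $1+H^{-1/2}VH^{-1/2}$ on $(\ker H)^\perp$ --- is exactly the natural way to exploit the standing hypothesis that $0$ is isolated in $\Sp(H)$: the spectral gap makes $H^{-1/2}$ bounded, lets you run the Birman--Schwinger comparison directly at energy $0$, and eliminates the limiting argument $\lambda\uparrow 0^-$ that constitutes the real work in the general statement of \cite{BS:AMST89}; the rank-nullity count through the projection onto $\ker H$ correctly produces the asymmetric $\dim\ker H$ correction, and the lower bound via the span of the $K$-eigenvectors with eigenvalue $<-1$ (which automatically lies in $(\ker H)^\perp$ since $\ran K\subset(\ker H)^\perp$) is fine. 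Two small points of hygiene: the phrase ``bounded below by $\delta^{-1/2}$ on $(\ker H)^\perp$'' should read that $\|H^{-1/2}\eta\|\leq\delta^{-1/2}\|\eta\|$ there (equivalently $H^{1/2}$ is bounded below by $\delta^{1/2}$ on $(\ker H)^\perp$) --- in any case the uniform constant $c$ is not needed, since Glazman's lemma only requires strict negativity on nonzero vectors, which injectivity of $H^{-1/2}$ on $(\ker H)^\perp$ already gives; and the compactness of $K=H^{-1/2}VH^{-1/2}$, which you take ``by hypothesis'', follows from the assumed compactness of $(H+1)^{-1/2}V(H+1)^{-1/2}$ together with the boundedness of $(H+1)^{1/2}H^{-1/2}$, again a consequence of the gap.
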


\begin{corollary}[Birman-Solomyak; see~{\cite[Theorem~10.1]{BS:TMMS72}} and~{\cite[Appendix~6]{BS:AMST80}}]\label{cor:SC.NHV=NKV} 
Assume further that $H^{-1/2}VH^{-1/2}$ is a Weyl operator in $\sL_{p,\infty}$ for some $p>0$. Then, under the semiclassical limit $h\rightarrow 0^+$, we have
\begin{equation}
 \lim_{h\rightarrow 0^+} h^{2p}N^{-}\big( h^2H+V\big) = \lim_{j\rightarrow \infty} j\lambda_j^{-}\left(H^{-\frac12}VH^{-\frac12}\right)^p.
 \label{eq:SC.NHV=NKV-SC} 
\end{equation}
\end{corollary}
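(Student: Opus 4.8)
The plan is to reduce the statement to the abstract Birman--Schwinger principle~(\ref{eq:SC.Birman-Schwinger}) together with the counting-function asymptotics for Weyl operators recorded in Remark~\ref{rmk:Bir-Sol.counting}. Write $K:=H^{-1/2}VH^{-1/2}$; as noted just before~(\ref{eq:SC.Birman-Schwinger}), the hypotheses on $V$ make $K$ a selfadjoint compact operator, and by assumption $K\in\sW_{p,\infty}$, so it falls under case (ii) of the definition of a Weyl operator and $\Lambda^-(K)=\lim_{j\to\infty}j^{1/p}\lambda_j^-(K)$ exists. By~(\ref{eq:Bir-Sol.counting-Lambda}) this yields
\begin{equation*}
 \lim_{\lambda\to 0^+}\lambda^p N^-(K;\lambda)=\lim_{j\to\infty} j\,\lambda_j^-(K)^p=\Lambda^-(K)^p .
\end{equation*}

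First I would apply the Birman--Schwinger principle with $h^2H$ in place of $H$, for each fixed $h>0$. The hypotheses are stable under this substitution: $h^2H$ is selfadjoint with non-negative spectrum, $\Sp(h^2H)=h^2\Sp(H)$ so $0$ remains an isolated eigenvalue with $\ker(h^2H)=\ker H$ finite-dimensional, and since for fixed $h>0$ the form norm $\|(1+h^2H)^{1/2}\cdot\|$ is equivalent to $\|(1+H)^{1/2}\cdot\|$ the form $Q_V$ is again symmetric and $h^2H$-form compact. The corresponding Birman--Schwinger operator is $(h^2H)^{-1/2}V(h^2H)^{-1/2}=h^{-2}K$, so~(\ref{eq:SC.Birman-Schwinger}) becomes
\begin{equation*}
 N^-\!\big(h^{-2}K;1\big)\le N^-\!\big(h^2H+V\big)\le N^-\!\big(h^{-2}K;1\big)+\dim\ker H .
\end{equation*}

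Next I would rescale the counting function. Since $\lambda_j^-(h^{-2}K)=h^{-2}\lambda_j^-(K)$, we have $N^-(h^{-2}K;1)=\#\{j:\lambda_j^-(K)>h^2\}=N^-(K;h^2)$. Multiplying the previous display by $h^{2p}$ and letting $h\to 0^+$, the term $h^{2p}\dim\ker H\to 0$ because $\dim\ker H<\infty$; hence, by the squeeze theorem and the substitution $\lambda=h^2$,
\begin{equation*}
 \lim_{h\to 0^+} h^{2p}N^-\!\big(h^2H+V\big)=\lim_{h\to 0^+} h^{2p}N^-(K;h^2)=\lim_{\lambda\to 0^+}\lambda^p N^-(K;\lambda)=\lim_{j\to\infty} j\,\lambda_j^-(K)^p,
\end{equation*}
where the last equality is the displayed consequence of Remark~\ref{rmk:Bir-Sol.counting}. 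This is exactly~(\ref{eq:SC.NHV=NKV-SC}).

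There is no serious obstacle in this argument; the only point requiring a little care is the bookkeeping around the substitution $H\mapsto h^2H$ --- verifying that the abstract Birman--Schwinger principle still applies (stability of the symmetry and form-compactness of $Q_V$, and of the finiteness of $\dim\ker H$, under the rescaling), that the Birman--Schwinger operator scales precisely by the factor $h^{-2}$, and the elementary identity $N^-(h^{-2}K;1)=N^-(K;h^2)$. Everything else is a direct appeal to~(\ref{eq:SC.Birman-Schwinger}) and~(\ref{eq:Bir-Sol.counting-Lambda}).
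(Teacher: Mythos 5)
Your proof is correct and follows essentially the same route as the paper: the abstract Birman--Schwinger principle~(\ref{eq:SC.Birman-Schwinger}) to reduce $N^-(h^2H+V)$ to the counting function of $h^{-2}H^{-1/2}VH^{-1/2}$ up to an $\op{O}(1)$ term, the scaling identity $N^-(h^{-2}K;1)=N^-(K;h^2)$, and then Remark~\ref{rmk:Bir-Sol.counting}. The only (cosmetic) difference is that the paper rewrites $N^-(h^2H+V)=N^-(H+h^{-2}V)$ and applies the principle with $H$ fixed and potential $h^{-2}V$, whereas you apply it to the pair $(h^2H,V)$ and check stability of the hypotheses under the rescaling --- both yield the same Birman--Schwinger operator and the same conclusion.
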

\begin{proof}
The Birman-Schwinger principle~(\ref{eq:SC.Birman-Schwinger}) ensures that, as $h\rightarrow 0^+$, we have
\begin{align}
 N^{-}\big( h^2H+V\big) &= N^{-}(H+h^{-2}V), \nonumber\\
 &=  N^{-}\left(H^{-\frac12}h^{-2}VH^{-\frac12};1\right) +\op{O}(1),
 \label{eq:SC.NHV-NKV-SC}\\
 & = N^{-}\left(H^{-\frac12}VH^{-\frac12};h^2\right) +\op{O}(1). \nonumber
\end{align}
If  $H^{-\frac12}VH^{-\frac12}$ is a Weyl operator in $\sL_{p,\infty}$ for some $p>0$, then by Remark~\ref{rmk:Bir-Sol.counting} we have
\begin{equation*}
 \lim_{h \rightarrow 0^+} h^{2p}N^{-}\left(H^{-\frac12}VH^{-\frac12};h^2\right)= \lim_{j\rightarrow \infty} j\lambda_j^-\left(H^{-\frac12}VH^{-\frac12}\right)^p. 
\end{equation*}
Combining this with~(\ref{eq:SC.NHV-NKV-SC}) immediately gives the result. 
\end{proof}

Combining the above corollary with Proposition~\ref{prop:Bir-Sol.Weyl-mesurable}  leads us to the following semiclassical interpretation of Connes' integral.

\begin{proposition}\label{prop:SC.NHV-bint}
 Assume that $V\geq 0$ and $H^{-\frac12}VH^{-\frac12}$ is a Weyl operator in $\sL_{1,\infty}$. Then, the operator $H^{-\frac12}VH^{-\frac12}$ is strongly measurable, and we have 
\begin{equation}
  \lim_{h \rightarrow 0^+} h^2 N^{-}\big( h^2H-V\big)=  \bint H^{-\frac12}VH^{-\frac12}.
  \label{eq:SC.NHV-bint}   
\end{equation}
 \end{proposition}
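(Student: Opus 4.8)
The plan is to obtain~\eqref{eq:SC.NHV-bint} by feeding the perturbation $-V$ into the Birman--Schwinger--type Weyl's law of Corollary~\ref{cor:SC.NHV=NKV} and then reading off the value of the noncommutative integral from Proposition~\ref{prop:Bir-Sol.Weyl-mesurable}. Write $K:=H^{-\frac12}VH^{-\frac12}$, which is a selfadjoint compact operator by the standing $H$-form compactness assumption. First I would record that $V\geq 0$ forces $K\geq 0$: one checks that $\scal{K\eta}{\eta}=Q_V\big(H^{-\frac12}\eta,H^{-\frac12}\eta\big)\geq 0$ for every $\eta\in\sH$. Being non-negative, $K$ has no negative eigenvalues, so $\lambda_j^{-}(K)=0$ and $\lambda_j^{+}(K)=\lambda_j(K)=\mu_j(K)$ for all $j$; since $K\in\sW_{1,\infty}$ by hypothesis, this gives $\Lambda^{-}(K)=0$ and $\Lambda^{+}(K)=\Lambda(K)=\lim_{j\to\infty}j\mu_j(K)$. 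Proposition~\ref{prop:Bir-Sol.Weyl-mesurable} then yields at once that $K$ is strongly measurable and that $\bint K=\Lambda^{+}(K)-\Lambda^{-}(K)=\Lambda(K)$. This disposes of the strong measurability assertion and reduces~\eqref{eq:SC.NHV-bint} to the identity $\lim_{h\to0^+}h^2N^{-}(h^2H-V)=\Lambda(K)$.

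For that identity I would apply Corollary~\ref{cor:SC.NHV=NKV} with $V$ replaced by $-V$ and $p=1$. The form $-Q_V$ is again symmetric and $H$-form compact, so $-V$ falls under the standing hypotheses of the section, and $H^{-\frac12}(-V)H^{-\frac12}=-K$ is a Weyl operator in $\sL_{1,\infty}$ because $\sW_{1,\infty}$ is stable under multiplication by $-1$ on selfadjoint operators, with $(-K)^{\pm}=K^{\mp}$ and hence $\lambda_j^{\pm}(-K)=\lambda_j^{\mp}(K)$. Corollary~\ref{cor:SC.NHV=NKV} then gives
\[
\lim_{h\to0^+}h^2N^{-}\big(h^2H-V\big)=\lim_{j\to\infty}j\,\lambda_j^{-}(-K)=\lim_{j\to\infty}j\,\lambda_j^{+}(K)=\Lambda^{+}(K)=\Lambda(K),
\]
and combining this with the previous step produces~\eqref{eq:SC.NHV-bint}.

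I do not expect a genuine obstacle here: the statement is essentially a repackaging of Corollary~\ref{cor:SC.NHV=NKV} and Proposition~\ref{prop:Bir-Sol.Weyl-mesurable}. The one point that wants a little care is the sign bookkeeping. One must invoke the Birman--Schwinger corollary for the perturbation $-V$ (so that $h^2H+(-V)=h^2H-V$), translate $\lambda_j^{-}(-K)$ back into $\lambda_j^{+}(K)$ via $(-K)^{-}=K^{+}$, and then use the positivity of $K$ to identify $\Lambda^{+}(K)$ with $\bint K$; modulo this, everything is immediate from the results already established.
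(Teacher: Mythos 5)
Your proof is correct and follows essentially the same route as the paper, which presents this proposition precisely as the combination of Corollary~\ref{cor:SC.NHV=NKV} (applied to the perturbation $-V$) with Proposition~\ref{prop:Bir-Sol.Weyl-mesurable}, using $K=H^{-\frac12}VH^{-\frac12}\geq 0$ to identify $\Lambda^{+}(K)-\Lambda^{-}(K)$ with $\Lambda(K)=\bint K$. Your explicit sign bookkeeping ($\lambda_j^{\pm}(-K)=\lambda_j^{\mp}(K)$) just spells out what the paper leaves implicit.
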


 In a recent preprint McDonald-Sukochev-Zanin~\cite{MSZ:arXiv21} obtained a related formula in the setting of spectral triples under some additional 
 technical conditions. We stress out that, on the one hand, the equality~(\ref{eq:SC.NHV-bint}) holds in fairly great generality. For instance, we don't have to require $H$ to have compact resolvent. On the other hand, this equality is an immediate consequence of the abstract Birman-Schwinger principle (compare~\cite{MSZ:arXiv21}). 

In any case, the formula~(\ref{eq:SC.NHV-bint}) highlights a neat link between the semiclassical analysis of Schr\"odinger operators and Connes' noncommutative geometry. These are two different approaches to quantum theory, and so it is quite interesting to witness some intersection between them. 

To illustrate the above results, suppose that $(M^n,g)$ be a closed Riemannian manifold. Let $\Delta_g$ be the corresponding Laplace-Beltami operator acting on functions, and take $H=\Delta_g^{n/2}$. In this setup $\sH_{+}$ is the Sobolev space $W^{n/2}_2(M)$ and $\sH_{-}$ is the antilinear dual $W^{-n/2}_2(M)$. Let $V(x)$ be a real-valued potential in the Orlicz class $\LlogL(M)$, i.e., a measurable function such that
\begin{equation*}
 \int (1+|V(x)|)\log(1+|V(x)|)\sqrt{g(x)}dx<\infty.  
\end{equation*}
For instance, we may take $V(x)$ to be in $L_p(M)$ for any $p>1$.

It can be shown that $V(x)$ gives rise to a bounded operator $V:W^{n/2}(M)\rightarrow W^{-n/2}(M)$ (see~\cite{Ro:arXiv21}), and so $\Delta_g^{-n/4}V\Delta_g^{-n/4}$ is bounded on $L_2(M)$. As mentioned in Remark~\ref{rmk:Strong.Orlicz}, results of Rozenblum~\cite{Ro:arXiv21} and Sukochev-Zanin~\cite{SZ:arXiv21} (see also~\cite{Po:Weyl-Orlicz, RS:EMS21}) ensure us that $\Delta_g^{-n/4}V\Delta_g^{-n/4}$ is a Weyl operator in $\sL_{1,\infty}$. Thus, $\Delta_g^{n/2}+V$ makes sense as a form sum as above, and by Corollary~\ref{cor:SC.NHV=NKV} we have
\begin{align*}
  \lim_{h\rightarrow 0^+} h^{n}N^{-}\big( h^n\Delta_g^{n/2}+V\big)& = \lim_{j\rightarrow \infty} j\lambda_j^{-}\left(\Delta_g^{-\frac{n}{4}}V\Delta_g^{-\frac{n}{4}}\right)\\
  & = \frac1{n} (2\pi)^{-n}  \int_M  V(x)_{-}\sqrt{g(x)}dx. 
\end{align*}
If $V(x)\geq 0$, then by using Proposition~\ref{prop:SC.NHV-bint} we further obtain
\begin{align*}
 \bint \Delta_g^{-\frac{n}{4}}V\Delta_g^{-\frac{n}{4}} & = \lim_{h \rightarrow 0^+} h^nN^{-}\big(h^n \Delta_g^{\frac{n}{2}}-V\big),\\
 &=  \frac1{n} (2\pi)^{-n}  \int_M  V(x)\sqrt{g(x)}dx. 
 \end{align*}
This provides us with a semiclassical interpretation of Connes' integration formula~(\ref{eq:NCInt.Connes-int-formula}). (See also~\cite{Po:Weyl-Orlicz} for an extension of this result to matrix-valued Orlicz-$\LlogL$ potentials.) 

\appendix

\section{Embeddings of Hilbert spaces} \label{sec:Appendix}
In this section, we gather a few basic facts about embedding of Hilbert spaces and their actions on Schatten and weak Schatten classes. 

Given quasi-Banach spaces $\sE$ and $\sE'$, a continuous linear embedding $\iota:\sE'\rightarrow \sE$ is a continuous linear map which is one-to-one and has closed range. For instance, any isometric linear map $\iota:\sE'\rightarrow \sE$ is an embedding.

Suppose now that $\sH$ and $\sH'$ are Hilbert spaces, and let $\iota: \sH'\rightarrow \sH$ be a continuous linear embedding. Denote by $\sH_1$ the range of $\iota$. By assumption this is a closed subspace of $\sH$. The embedding $\iota:\sH'\rightarrow \sH$ induces a continuous linear isomorphism  $\iota:\sH'\rightarrow \sH_1$ whose inverse is denoted $\iota^{-1}:\sH_1\rightarrow \sH'$. Let $\pi:\sH\rightarrow \sH$ be the orthogonal projection onto $\sH_1$. Then $\iota^{-1}\circ \pi$ is a left-inverse of $\iota$. More precisely, 
\begin{equation*}
 (\iota^{-1}\circ \pi)\circ \iota =\op{id}_{\sH'}, \qquad  \iota\circ (\iota^{-1}\circ \pi)=\pi. 
\end{equation*}
The pushforward $\iota_*:\sL(\sH')\rightarrow \sL(\sH)$ is then defined by
\begin{equation}
 \iota_*A= \iota \circ A \circ (\iota^{-1}\circ \pi), \qquad A \in \sL(\sH'). 
 \label{eq:App.embedding-sLsH}
\end{equation}
In fact, with respect to the orthogonal splitting $\sH=\sH_1\oplus \sH_1^\perp$ we have 
\begin{equation}\label{eq:App.iota*A}
 \iota_*A= 
\begin{pmatrix}
 \iota A \iota^{-1} & 0 \\
 0 & 0
\end{pmatrix}. 
\end{equation}
In particular, we see that the pushforward map $\iota_*:\sL(\sH')\rightarrow \sL(\sH)$ is a continuous embedding. It is also multiplicative, and so we get an embedding of (unital) Banach algebras.  

If in addition $\iota$ is an isometric embedding, then $\iota_*:\sL(\sH')\rightarrow \sL(\sH)$ is an isometric embedding as well. In this case, we $\iota:\sH'\rightarrow \sH_1$ is a unitary operator, and so in view of~(\ref{eq:App.iota*A}) we have $\iota_*(A^*)=(\iota_*A)^*$. Thus, in this case $\iota_*:\sL(\sH')\rightarrow \sL(\sH)$ even is an (isometric) embedding of $C^*$-algebras. 

In any case the pushforward  $\iota_*:\sL(\sH')\rightarrow \sL(\sH)$ maps compact operators on $\sH'$ to compact operators on $\sH$. Moreover, in view of~(\ref{eq:App.iota*A}) we have the following result. 

\begin{proposition}\label{prop:App.eigenvalues} 
 If $A$ is a compact operator on $\sH'$, then $A$ and $\iota_*A$ have the same non-zero eigenvalues with the same algebraic multiplicities. In particular, any eigenvalue sequence for $A$ is an eigenvalue sequence for $\iota_*A$, and vice versa.  
\end{proposition}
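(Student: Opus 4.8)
The plan is to read everything off the block-matrix description~(\ref{eq:App.iota*A}) of $\iota_*A$ relative to the orthogonal splitting $\sH=\sH_1\oplus\sH_1^\perp$, where $\sH_1=\ran\iota$. Writing $B:=\iota A\iota^{-1}\in\sL(\sH_1)$, formula~(\ref{eq:App.iota*A}) says that $\iota_*A$ acts as $B$ on $\sH_1$ and as $0$ on $\sH_1^\perp$, i.e.\ $\iota_*A=B\oplus 0$. So the argument naturally divides into two elementary steps: first compare the nonzero spectral data of $A$ with those of $B$, then compare the nonzero spectral data of $B$ with those of $B\oplus 0$.

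For the first step I would use that $\iota:\sH'\to\sH_1$ is a topological isomorphism (continuous, bijective, with continuous inverse) and that $B$ is the conjugate of $A$ by it. Hence for every $\lambda\in\C$ and every $\ell\geq 1$ we have $(B-\lambda)^\ell=\iota\,(A-\lambda)^\ell\,\iota^{-1}$, so $\iota$ restricts to a linear isomorphism $\ker(A-\lambda)^\ell\xrightarrow{\ \sim\ }\ker(B-\lambda)^\ell$; taking the union over $\ell\geq 0$ gives an isomorphism of root spaces $E_\lambda(A)\cong E_\lambda(B)$. In particular $\lambda$ is an eigenvalue of $A$ if and only if it is one of $B$, and then the algebraic multiplicities agree: $\dim E_\lambda(A)=\dim E_\lambda(B)$. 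Note this uses only that $\iota$ is a bijection with bounded inverse, not any unitarity.

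For the second step, set $T:=B\oplus 0$ on $\sH=\sH_1\oplus\sH_1^\perp$ and fix $\lambda\neq 0$. If $(x,y)\in\sH_1\oplus\sH_1^\perp$ satisfies $(T-\lambda)^\ell(x,y)=\bigl((B-\lambda)^\ell x,\,(-\lambda)^\ell y\bigr)=0$, then $\lambda\neq 0$ forces $y=0$ and $(B-\lambda)^\ell x=0$; conversely every such $(x,0)$ lies in $\ker(T-\lambda)^\ell$. Thus $E_\lambda(T)=E_\lambda(B)\oplus\{0\}$ for $\lambda\neq 0$, so $T$ and $B$ have exactly the same nonzero eigenvalues with the same algebraic multiplicities. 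Combining the two steps, $A$ and $\iota_*A=T$ have the same nonzero eigenvalues with the same algebraic multiplicities.

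The last sentence of the statement is then immediate: an eigenvalue sequence of a compact operator is, by definition, any enumeration of its nonzero eigenvalues repeated according to algebraic multiplicity, arranged with nonincreasing moduli and completed by zeros, the only freedom being the ordering within a block of equal modulus; since $A$ and $\iota_*A$ share these data verbatim — and $\iota_*A$ is compact by the discussion preceding the proposition — the admissible eigenvalue sequences of the two operators literally coincide. I do not expect a genuine obstacle here; the only point that deserves a line of care is that conjugation by the possibly non-unitary isomorphism $\iota$ preserves the full root spaces (not merely the eigenspaces), which is exactly why the comparison is carried out with the powers $(A-\lambda)^\ell$ rather than with $A-\lambda$ alone.
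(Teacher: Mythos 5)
Your proof is correct and takes essentially the same route as the paper: the paper gives no written argument beyond the remark that the proposition holds ``in view of~(\ref{eq:App.iota*A})'', and your two steps (conjugation by the topological isomorphism $\iota:\sH'\to\sH_1$ identifies the root spaces $E_\lambda(A)\cong E_\lambda(\iota A\iota^{-1})$ via the powers $(A-\lambda)^\ell$, and appending the zero block on $\sH_1^\perp$ leaves the root spaces for $\lambda\neq 0$ unchanged) are exactly the fleshed-out version of that observation. No gap.
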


Suppose that $\iota: \sH'\rightarrow \sH$ is an isometric embedding. As mentioned above $\iota_*:\sL(\sH')\rightarrow \sL(\sH)$ is an isometric embdedding of $C^*$-algebras. Thus, for any $A \in \sL(\sH')$ and $f\in C(\Sp(A))$ we have 
\begin{equation*}
 f(\iota_*A)= \begin{pmatrix}
 \iota f(A) \iota^{-1} & 0 \\
 0 & 0
\end{pmatrix} =\iota_*f(A). 
\end{equation*}
In particular, we have $|\iota_*A|=\iota_*|A|$, and so $|\iota_*A|$ and $\iota_*|A|$ have the same non-zero eigenvalues with same multiplicity. This means that $A$ and $\iota_*A$ have the same singular value sequence. 

In general, as $\iota:\sH'\rightarrow \sH_1$ is a continuous linear isomorphism, we can pullback the inner product on $\sH_1$ to a new inner product on $\sH'$, which is equivalent to its original inner product.
With respect to this new inner product the embedding $\iota:\sH'\rightarrow \sH$ becomes isometric. Thus, given any compact operator $A$ on $\sH'$, the singular value sequence of $\iota_*A$ agrees with the singular value sequence of $A$ with respect to the new inner product on $\sH'$. As this inner product is equivalent to the original inner product of $\sH'$, by using the min-max principle~(\ref{eq:min-max}) we eventually arrive at the following result.

\begin{proposition}\label{prop:App.embed-Schatten}
 Let  $\iota:\sH'\rightarrow \sH$ be a continuous linear embedding. 
 \begin{enumerate}
 \item There is $c>0$ such that, for every compact operator $A$ on $\sH'$, we have 
 \begin{equation*}
 c^{-1} \mu_j(A)\leq \mu_j\big(\iota_*A) \leq c \mu_j(A) \qquad \forall j\geq 0. 
\end{equation*}
We may take $c=1$ when $\iota$ is an isometric embedding. 

\item Given any $p>0$, the operator $A$ is in the class $\sL_{p}(\sH')$ (resp., $\sL_{p,\infty}(\sH')$)  if and only if $\iota_*A$ is in 
$\sL_{p}(\sH)$ (resp., $\sL_{p,\infty}(\sH)$). Moreover, the pushforward map~(\ref{eq:App.embedding-sLsH}) induces continuous linear embeddings, 
 \begin{equation*}
 \iota_*: \sL_p(\sH')\longrightarrow \sL_p(\sH), \qquad \iota_*: \sL_{p,\infty}(\sH')\longrightarrow \sL_{p,\infty}(\sH). 
\end{equation*}
These embeddings are isometric whenever $\iota$ is an isometric embedding.
\end{enumerate}
 \end{proposition}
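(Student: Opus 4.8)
The statement is essentially a bookkeeping consequence of the matrix description~(\ref{eq:App.iota*A}) of $\iota_*$, together with the behaviour of singular values under passage to an equivalent inner product. The plan is to establish part~(1) first, and then read off part~(2).

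For part~(1) I would first treat the case of an \emph{isometric} embedding. Here $\iota:\sH'\rightarrow \sH_1$ is unitary, so the corner $\iota A\iota^{-1}$ appearing in~(\ref{eq:App.iota*A}) is unitarily equivalent to $A$, and in particular has the same singular value sequence; moreover, adjoining the zero block acting on $\sH_1^{\perp}$ does not alter the (non-increasing, repeated-with-multiplicity) singular value sequence of a compact operator, because the non-zero eigenvalues of $|\iota_*A|=\diag\big(|\iota A\iota^{-1}|,0\big)$ coincide with those of $|\iota A\iota^{-1}|$. Hence $\mu_j(\iota_*A)=\mu_j(A)$ for all $j\geq 0$, which is the $c=1$ assertion. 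For a general continuous linear embedding, the open mapping theorem makes $\iota:\sH'\rightarrow \sH_1$ a topological isomorphism with $\|\iota^{-1}\|^{-1}\|\xi\|\leq \|\iota\xi\|\leq \|\iota\|\,\|\xi\|$; pulling back the inner product of $\sH_1\subseteq\sH$ along $\iota$ gives an equivalent inner product $\langle\cdot,\cdot\rangle'$ on $\sH'$ for which $\iota$ is isometric, so by the previous case $\mu_j(\iota_*A)$ equals the $j$-th singular value $\mu_j'(A)$ of $A$ computed with $\langle\cdot,\cdot\rangle'$. To compare $\mu_j'(A)$ with $\mu_j(A)$ I would invoke the min-max principle~(\ref{eq:min-max}): as $E$ ranges over the $j$-dimensional subspaces, $E^\perp$ ranges over \emph{all} closed subspaces of codimension $j$, a family that is intrinsic to the (common) topology of $\sH'$ and hence is the same whether one forms orthogonal complements with $\langle\cdot,\cdot\rangle$ or with $\langle\cdot,\cdot\rangle'$; on each such subspace the two operator norms of $A$ differ by at most the factor $c:=\|\iota\|\,\|\iota^{-1}\|$. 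Minimising over this common family yields $c^{-1}\mu_j(A)\leq \mu_j(\iota_*A)\leq c\,\mu_j(A)$ for all $j\geq 0$.

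For part~(2), the two-sided estimate of part~(1) gives at once $c^{-1}\|A\|_p\leq \|\iota_*A\|_p\leq c\,\|A\|_p$ and, with the convention~(\ref{def:lp_infty_quasinorm}), $c^{-1}\|A\|_{p,\infty}\leq \|\iota_*A\|_{p,\infty}\leq c\,\|A\|_{p,\infty}$; in particular $A$ lies in $\sL_p(\sH')$, resp.\ $\sL_{p,\infty}(\sH')$, if and only if $\iota_*A$ lies in $\sL_p(\sH)$, resp.\ $\sL_{p,\infty}(\sH)$. The upper bounds say that $\iota_*$ is bounded on each of these ideals; it is injective there since $\iota_*:\sL(\sH')\rightarrow\sL(\sH)$ is already injective by~(\ref{eq:App.iota*A}); and the lower bounds make $\iota_*$ bounded below, hence of closed range by the usual Cauchy-sequence argument in the complete quasi-normed spaces $\sL_p$ and $\sL_{p,\infty}$. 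Thus $\iota_*$ restricts to continuous linear embeddings $\sL_p(\sH')\hookrightarrow\sL_p(\sH)$ and $\sL_{p,\infty}(\sH')\hookrightarrow\sL_{p,\infty}(\sH)$, and these are isometric when $\iota$ is isometric because then $c=1$.

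The only step that is not purely formal is the general case of part~(1), i.e.\ the invariance of the singular value sequence under an equivalent inner product; the point requiring care is that the family of codimension-$j$ subspaces entering the min-max is genuinely identical for the two inner products, so that one is comparing the minima of comparable quantities over a common index set (and, as a trivial side remark, that adjoining an infinite zero block to a compact operator leaves its singular value sequence unchanged). Everything else — the reduction via~(\ref{eq:App.iota*A}) and the Schatten/weak-Schatten bookkeeping — is routine once part~(1) is in hand.
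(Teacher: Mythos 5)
Your proposal is correct and follows essentially the same route as the paper: treat the isometric case first (the paper gets $|\iota_*A|=\iota_*|A|$ from the $C^*$-algebra embedding, you get the same conclusion via unitary equivalence of the corner $\iota A\iota^{-1}$ plus the zero-block remark), then pull back the inner product of $\sH_1$ to $\sH'$ so that $\iota$ becomes isometric, and compare singular values for the two equivalent inner products through the min-max principle~(\ref{eq:min-max}). The part~(2) bookkeeping (two-sided quasi-norm estimates, injectivity, closed range, isometry when $c=1$) is the same routine consequence the paper leaves implicit.
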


\begin{remark}
 Part~(ii) holds more generally for any symmetrically normed ideal. 
\end{remark}

\subsection*{Aknowledgements} I wish to thank Alain Connes, Grigori Rozenblum, Edward McDonald, and Fedor Sukochev for various stimulating discussions related to the subject matter of this article. I also thank the University of Ottawa for its hospitality during the whole preparation of this paper.

\end{document}